\numberwithin{equation}{section}
\newcommand{\bbD}{{\mathbb{D}}}
\newcommand{\bbR}{{\mathbb{R}}}
\newcommand{\bbC}{{\mathbb{C}}}
\newcommand{\bbT}{{\mathbb{T}}}
\newcommand{\cA}{{\mathcal{A}}}
\newcommand{\cB}{{\mathcal{B}}}
\newcommand{\cC}{{\mathcal{C}}}
\newcommand{\cJ}{{\mathcal{J}}}
\newcommand{\cM}{{\mathcal{M}}}
\newcommand{\cP}{{\mathcal{P}}}
\newcommand{\cQ}{{\mathcal{Q}}}
\newcommand{\cR}{{\mathcal{R}}}
\newcommand{\cV}{{\mathcal{V}}}
\newcommand{\fA}{{\mathfrak{A}}}
\newcommand{\fa}{{\mathfrak{a}}}
\newcommand{\fc}{{\mathfrak{c}}}
\newcommand{\sE}{{\mathsf{E}}}
\newcommand{\SL}{{\mathrm{SL}}}
\newcommand{\SU}{{\mathrm{SU}}}
\renewcommand{\a}{\alpha}
\newcommand{\e}{{\epsilon}}
\newcommand{\s}{{\sigma}}
\renewcommand{\phi}{\varphi}
\newcommand{\pd}{{\partial}}
\renewcommand{\Re}{\text{\rm Re}\,}
\renewcommand{\Im}{\text{\rm Im}\,}
\newcommand{\tr}{\text{\rm trace}\,}
\newcommand{\diam}{\text{\rm diam}}
\newcommand{\adj}{\text{\rm adj}}
\newtheorem{theorem}{Theorem}[section]
\newtheorem{lemma}[theorem]{Lemma}
\newtheorem{proposition}[theorem]{Proposition}
\newtheorem{corollary}[theorem]{Corollary}
\theoremstyle{definition}
\newtheorem{definition}[theorem]{Definition}
\newtheorem{remark}[theorem]{Remark}
\title
[Reflectionless canonical systems, I]{Reflectionless canonical systems, I. \\ Arov gauge and right limits}
\author{Roman Bessonov, Milivoje Luki\'c, and Peter Yuditskii}
\address{
\begin{flushleft}
Roman Bessonov: bessonov@pdmi.ras.ru\\\vspace{0.1cm}
St.\,Petersburg State University\\  
Universitetskaya nab. 7-9, 199034 St.\,Petersburg, RUSSIA\\
\vspace{0.1cm}
St.\,Petersburg Department of Steklov Mathematical Institute\\ Russian Academy of Sciences\\
Fontanka 27, 191023 St.Petersburg,  RUSSIA
\end{flushleft}
}
\address{
\begin{flushleft}
Milivoje Luki\'c: milivoje.lukic@rice.edu\\\vspace{0.1cm}
Rice University, Department of Mathematics MS-136, 
Houston, TX 77251-1892, USA.
\end{flushleft}
}
\address{
\begin{flushleft}
Peter Yuditskii: Petro.Yudytskiy@jku.at\\\vspace{0.1cm}
Abteilung f\"ur Dynamische Systeme und Approximationstheorie, Johannes Kepler Universit\"at Linz, A-4040 Linz, Austria
\end{flushleft}
}
\begin{document}

\begin{abstract}
In spectral theory, $j$-monotonic families of $2\times 2$ matrix functions appear as transfer matrices of many one-dimensional operators. We present a general theory of such families, in the perspective of canonical systems in Arov gauge. This system resembles a continuum version of the Schur algorithm, and allows to restore an arbitrary Schur function along the flow of associated boundary values at infinity. In addition to results in Arov gauge, this provides a gauge-independent perspective on the Krein--de Branges formula and the reflectionless property of right limits on the absolutely continuous spectrum. This work has applications to inverse spectral problems which have better behavior with respect to a normalization at an internal point of the resolvent domain. 
\end{abstract}

\maketitle

\section{Introduction}
For the most famous classes of one-dimensional self-adjoint and unitary operators, such as Schr\"odinger, Dirac, Jacobi, and CMV operators \cite{LevSar,SimonSzego}, a basic object in their study is a $2\times 2$ transfer matrix $\cA(z,t)$, which describes the evolution of formal eigensolutions at ``energy" (spectral parameter) $z\in \bbC$ from values at $0$ to values at $t$. These are sometimes viewed as part of families $\cA(z,t_1,t_2)$ with $\cA(z,t) = \cA(z,0,t)$ and the cocycle condition
\[
\cA(z,t_1,t_2) \cA(z,t_2,t_3) = \cA(z,t_1,t_3)
\]
(we use the convention that matrices act as linear operators by right-multiplication on row vectors). The exact construction of the transfer matrix includes some conventions specific to each class of operators, but in all cases, the standard conventions give matrices which are entire functions of $z$, obey
\[
\det \cA(z,t) = 1, \qquad \forall z, t
\]
and
\[
\cA(z,0) = I, \qquad \forall z.
\]
These assumptions will hold throughout our paper. Our primary motivation here are continuous systems, so we think of $t$ as a continuous parameter and $\cA(z,t)$ will be continuous in $t$.

Another crucial property, first observed by Weyl \cite{Weyl} in the setting of Schr\"odinger operators, is the nesting property of certain Weyl disks and an associated limit point/limit circle dichotomy. This is naturally expressed in the following language.

\begin{definition}
Let $j$ be a $2\times 2$ matrix such that $j=j^* = j^{-1}$. An entire $2\times 2$ matrix valued function $\cA(z)$ is called $j$-inner if it obeys $j - \cA(z) j \cA(z)^* \ge 0$ for $z \in \bbC_+ = \{z\in \bbC: \; \Im z > 0\}$ and $j - \cA(z) j \cA(z)^* = 0$ for $z \in \bbR$.
\end{definition}

There is a reflection symmetry: for every $j$-inner entire function $\cA(z)$,
\begin{equation}\label{reflectionsymmetry0}
\cA(\bar z)^* = j \cA(z)^{-1} j, \qquad \forall z \in \bbC. 
\end{equation}
 This holds on $\bbR$ by $j\cA(z) j \cA(z)^* = I$, and  
 since both sides are entire, it holds on $\bbC$.

\begin{definition}
A family of matrix functions $\cA(z,t)$ parametrized by a real parameter $t$ is called $j$-monotonic if $\cA(z,t_1)^{-1} \cA(z,t_2)$ is $j$-inner whenever $t_1 < t_2$.
\end{definition}

To the family $\cA(z,t)$ we associate for $z \in \bbC_+$, $t \ge 0$ the Weyl disks
\begin{equation}\label{WeylDiskeqn}
D(z,t) = \{ w \mid \begin{pmatrix} w & 1 \end{pmatrix} \cA(z,t) j \cA(z,t)^* \begin{pmatrix} w & 1 \end{pmatrix}^* \ge 0 \}.
\end{equation}
The $j$-monotonic property implies
\[
\cA(z,t_1) j \cA(z,t_1)^* \ge \cA(z,t_2) j \cA(z,t_2)^*, \qquad z \in \bbC_+, \quad t_1 < t_2
\]
and precisely corresponds to the nesting property of the Weyl disks, $D(z,t_2) \subset D(z,t_1)$ for $t_1 < t_2$.  

The exact $j$-monotonic property depends on the class of operators; for some classes of self-adjoint operators one has $\cJ$-monotonicity with
\begin{equation}\label{cJmatrixdefn}
\cJ = \begin{pmatrix} 0 & -i \\ i & 0 \end{pmatrix}.
\end{equation}
However, this matrix can always be changed by a suitable conjugation of the transfer matrices, so without loss of generality, in this paper we will study $j$-monotonic transfer matrices for the choice
\[
j = \begin{pmatrix}
-1 & 0 \\
0 & 1
\end{pmatrix}.
\]
Note that with this convention, $\cA(z,0) = I$ implies $D(z,0) = \overline{\bbD}$, i.e., our Weyl disks are subsets of the unit disk. Thus, our choice of $j$ also provides a convenient compactification. Moreover, for $z\in \bbR$, the values $\cA(z,t)$ belong to the group of $2\times 2$ matrices
\[
\SU(1,1) = \{ U \mid Uj U^* = j\text{ and } \det U = 1\}.
\]
It follows from definitions that Weyl disks are not affected by a transformation
\begin{equation}\label{gaugetransformation}
\cA(z,t) \mapsto \cA(z,t) U(t), \qquad U(t) \in \SU(1,1).
\end{equation}
Borrowing terminology from Yang--Mills theory, we say that the family of transfer matrices has a \emph{gauge freedom} with the gauge group $\SU(1,1)$, while the Weyl disks are \emph{observables}, i.e., gauge independent. In direct spectral theory, transfer matrices are often normalized by a certain behavior at $\infty$, and in the theory of canonical systems, many works have been written in what we will call Potapov--de Branges gauge (PdB gauge), which is normalized at $z=0$  \cite{P60,dB,AD08,Remling}. However, a different gauge was suggested  by  D.Z. Arov,  see \cite{DEY} for historical remarks:

\begin{definition}
We say the $j$-monotonic family $\cA(z,t)$ is:
\begin{itemize}
\item in Potapov--de Branges gauge (PdB-gauge) if $\cA(0,t) = I$ for every $t$;
\item in Arov gauge (A-gauge) if for every $t$,
\begin{equation}\label{A}
\cA(i,t) = \begin{pmatrix}a_{11}&0\\a_{21}&a_{22}\end{pmatrix}, \qquad a_{11} >0, \quad a_{22}>0.
\end{equation}
\end{itemize}
\end{definition}

Every $j$-monotonic family can be uniquely placed in PdB-gauge, by choosing $U(t) = \cA(0,t)^{-1}$ in \eqref{gaugetransformation}, and that transformation preserves continuity. Likewise, every $j$-monotonic family can be uniquely transformed into A-gauge by \eqref{gaugetransformation}, and the transformation preserves continuity:

\begin{proposition}\label{l07}
Every $j$-monotonic family can be uniquely placed in Arov gauge via a transformation \eqref{gaugetransformation}. Moreover, if the family $\cA(z,t)$ is continuous, so is the corresponding family $U(t)$.
\end{proposition}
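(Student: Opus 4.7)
The plan is to reduce Proposition~\ref{l07} to a direct computation at the single point $z = i$. For each $t$ I would write $\cA(i, t) = \begin{pmatrix} p & q \\ r & s \end{pmatrix}$ with $ps - qr = 1$, parametrize the gauge group as
\[
\SU(1,1) = \left\{ \begin{pmatrix} a & b \\ \bar b & \bar a \end{pmatrix} : \; |a|^2 - |b|^2 = 1 \right\},
\]
and search for the unique $U(t) \in \SU(1,1)$ for which $\cA(i, t) U(t)$ is lower triangular with positive diagonal.

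The crucial preliminary observation is that $\cA(z, t)$ is $j$-inner for $t \ge 0$, obtained by applying the $j$-monotonic hypothesis to the pair $0 < t$ together with $\cA(z, 0) = I$. Reading the $(1,1)$-entry of the matrix inequality $j - \cA(i, t) j \cA(i, t)^* \ge 0$ gives
\[
|p|^2 - |q|^2 \ge 1,
\]
and in particular $p \ne 0$. This is the nondegeneracy that makes the Arov normalization possible; I expect it to be the only real subtlety in the argument.

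Next, demanding that the $(1,2)$-entry of $\cA(i, t) U$ vanish yields $b = -q\bar a/p$, and substituting into $|a|^2 - |b|^2 = 1$ gives $|a|^2 = |p|^2/(|p|^2 - |q|^2)$. The positivity of the $(1,1)$-entry, which equals $a(|p|^2 - |q|^2)/\bar p$, forces $a/\bar p > 0$. Together these uniquely determine
\[
U(t) = \frac{1}{\sqrt{|p|^2 - |q|^2}} \begin{pmatrix} \bar p & -q \\ -\bar q & p \end{pmatrix}.
\]
A short calculation shows that the $(2,2)$-entry of $\cA(i, t) U(t)$ then equals $(ps - qr)/\sqrt{|p|^2 - |q|^2} = 1/\sqrt{|p|^2 - |q|^2} > 0$ automatically, using $\det \cA(i, t) = 1$. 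This simultaneously yields existence and uniqueness of $U(t)$.

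For continuity, if $\cA(z, t)$ is continuous in $t$ then $p(t), q(t)$ are continuous, and the uniform bound $|p(t)|^2 - |q(t)|^2 \ge 1$ keeps the square root in the explicit formula continuous and bounded away from zero. Hence $U(t)$ is continuous. I do not foresee a substantive obstacle: the whole proof is a single short computation, whose only nontrivial ingredient is the nondegeneracy $|p|^2 > |q|^2$ extracted from the $j$-inner property.
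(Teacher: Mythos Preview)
Your proof is correct and essentially identical to the paper's: both reduce to the single point $z=i$, extract $|p|^2-|q|^2\ge 1$ from $j$-contractivity of $\cA(i,t)$, write down the same explicit formula for $U(t)$ (the paper's equation \eqref{4nov201} is exactly your matrix), and read off continuity from that formula. The only cosmetic difference is in the uniqueness argument: you solve directly for the entries of $U$, whereas the paper notes that if $TU_1$ and $TU_2$ are both lower triangular with positive diagonal then so is $U_1^{-1}U_2\in\SU(1,1)$, and invokes Lemma~\ref{lemmaAgaugematrixi} to conclude $U_1^{-1}U_2=I$.
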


A dual perspective is given by canonical systems:

\begin{definition}
A canonical system is an initial value problem of the form
\begin{equation}\label{canonicalsystemgeneral}
\cA(z,t) j = j + \int_0^t \cA(z, \xi) \left( i z \cP(\xi) - \cQ(\xi)\right) d\nu(\xi), \qquad z \in \bbC,
\end{equation}
with $2\times2$ matrix valued coefficients $\cP, \cQ$ locally integrable with respect to a positive continuous Borel measure $\nu$ on $\bbR$ and such that $\cP \ge 0$, $\cQ = - \cQ^*$, $\tr(j\cP) = \tr(j\cQ) = 0$ $\nu$-almost everywhere. 
In particular, we say the canonical system \eqref{canonicalsystemgeneral} is:
\begin{itemize}
\item in Potapov--de Branges gauge (PdB-gauge) if $\cQ = 0$ $\nu$-a.e.,
\item in Arov gauge (A-gauge) if $\cP + \cQ$ is lower triangular and $\tr\cQ = 0$ $\nu$-a.e..
\end{itemize}
\end{definition}

Note that \eqref{canonicalsystemgeneral} is the integral form of the initial value problem 
\begin{equation}\label{canonicalsystemgeneraldifferentialform}
(\partial_\nu \cA)(z,t) j = \cA(z,t) ( iz \cP(t) - \cQ(t)) , \qquad  \cA(z,0) = \begin{pmatrix}1&0\\0&1\end{pmatrix},
\end{equation}
under the regularity assumptions that $\cA$ is locally absolutely continuous with respect to $\nu$, $\partial_\nu \cA$ denotes the Radon--Nikodym derivative with respect to $\nu$, and equality is assumed $\nu$-almost everywhere. By standard Volterra arguments, this initial value problem has a unique solution on any interval on which $\nu$ is finite; we call this solution the transfer matrix of the canonical system.

\begin{remark}
The classical  Dirac operator corresponds to the canonical system \eqref{canonicalsystemgeneraldifferentialform} with $\cP(t)=I$, and for a one-dimensional Schr\"odinger operator with potential $q(t)$, transfer matrices solve the canonical system \eqref{canonicalsystemgeneraldifferentialform} with
$$
\cP(t)=\frac 1 2\begin{pmatrix} 1&1\\1&1
\end{pmatrix},\quad 
\cQ(t)=\frac i 2\begin{pmatrix} q(t)-1&q(t)+1\\ q(t)+1&q(t)-1
\end{pmatrix}.
$$
In both cases $\nu(t)=t$. These gauges are not universal: not any canonical system can be reduced to one of these forms. In particular, one of them can not be reduced to the other by a gauge transform \eqref{gaugetransformation}.
\end{remark}

For canonical systems in A-gauge, it follows from the definition that $\cP+ \cQ$ is a nonnegative scalar multiple of a matrix of the form
\begin{equation}\label{AplusBformula1}
A+B = \begin{pmatrix} 1 & 0 \\ - 2 \fa & 1 \end{pmatrix}.
\end{equation}
By grouping that multiplier with the measure and changing the notation, we write canonical systems in A-gauge in the form
\begin{equation}\label{canonicalsystemArov}
\fA(z,\ell) j = j + \int_0^\ell \fA(z, l) \left( i z A(l) - B(l)\right) d\mu(l), \qquad z \in \bbC.
\end{equation}
where $A \ge 0$ and $B^* = -B$, so \eqref{AplusBformula1} gives
\begin{equation}\label{ABfromfa}
A=	\begin{pmatrix} 1& - \overline{\fa}\\
-{\fa}&1
\end{pmatrix},\quad B=
\begin{pmatrix} 0&  \overline{\fa}\\
- \fa&0
\end{pmatrix}.
\end{equation}
and $A \ge 0$ implies $\lvert \fa \rvert \le 1$. Due to this representation, the pair $(\mu,\fa)$ where $\mu$ is a positive measure which is finite on compact subsets of $\bbR$, and $\fa \in L^\infty(d\mu)$ with $\lVert \fa \rVert_\infty \le 1$, are the parameters of the canonical system in A-gauge.

In this paper we will denote by the same letter a measure $\mu$, finite on compact subsets of $\bbR$, and the corresponding distribution function which is an increasing function. Thus, we can write, e.g., $\mu([\ell_1, \ell_2)) = \mu(\ell_2) - \mu(\ell_1)$ for $\ell_1 \le \ell_2$.  It is well-known that each non-decreasing continuous function on $\bbR$ 
 is the distribution function of some nonnegative continuous Borel measure $\mu$ finite on compacts in $\bbR$.

We will explain in Section~\ref{section2} that our definitions for $j$-monotonic families and for canonical systems are precisely compatible. In particular, the solution of any canonical system in A-gauge is a continuous $j$-monotonic family $\fA(z,\ell)$ in A-gauge with $\det \fA(z,\ell) = 1$ and $\fA(z,0) = I$. Conversely:

\begin{theorem}\label{thmfamilytocansystem}
Let $\fA(z,\ell)$ be a continuous $j$-monotonic family in A-gauge with $\det \fA(z,\ell) = 1$ for all $z, \ell$ and $\fA(z,0) = I$. Then:
\begin{enumerate}[(a)]
\item $\fA(z,\ell)$ is the solution of a canonical system in A-gauge \eqref{canonicalsystemArov};
\item The matrix entry $\fA_{22}(i,\ell)$ is a decreasing function of $\ell$ and the positive measure $\mu$ is determined by its distribution function 
\[
\mu(\ell) = \log \fA_{11}(\ell) = - \log \fA_{22}(i,\ell).
\]
\item 
The function $ \fA(i,\ell)$ is a.c. with respect to $\mu$ and the parameters $\fa$ are determined by 
\[
A(\ell) + B(\ell) = \begin{pmatrix} 1 & 0 \\ -2\fa(\ell) & 1 \end{pmatrix} = - (\fA(i,\ell))^{-1} \partial_\mu \fA(i,\ell) j, \qquad \mu\text{-a.e. }\ell.
\]
Moreover, $\lvert \fa(\ell) \rvert \le 1$ for $\mu$-a.e.\ $\ell$.
\item at $z=i$, the solution has the form
		\begin{equation}\label{Agaugematrixati}
		\fA(i,\ell) = \begin{pmatrix}
		e^{\mu(\ell)} & 0 \\
		-e^{\mu(\ell)} \kappa(\ell) & e^{-\mu(\ell)}
		\end{pmatrix}, 
		\end{equation}
		where
		\begin{equation}\label{kappaformula}
		\kappa(\ell) = \int_0^\ell 2 \fa(l) e^{-2\mu(l)} d\mu(l).
		\end{equation}
\end{enumerate}
\end{theorem}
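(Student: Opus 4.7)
My plan is to establish parts (b), (d), (c), and (a) in that order, deriving (a) by matching canonical-system data extracted from the preceding parts. First I analyze $\fA(i,\ell)$: the A-gauge condition, $\det\fA(i,\ell)=1$, and $\fA(i,0)=I$ force $\fA(i,\ell)$ to take the form \eqref{Agaugematrixati} for continuous $\mu(\ell)=\log\fA_{11}(i,\ell)$ and $\kappa(\ell)$, both vanishing at $\ell=0$. A direct computation shows $C(i):=\fA(i,\ell_1)^{-1}\fA(i,\ell_2)$ is lower triangular with diagonal entries $e^{\pm(\mu(\ell_2)-\mu(\ell_1))}$ and $(2,1)$ entry $e^{\mu(\ell_1)+\mu(\ell_2)}(\kappa(\ell_1)-\kappa(\ell_2))$. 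The $j$-innerness of $C$ at $z=i$ gives $j-C(i)jC(i)^*\ge 0$; the $(1,1)$ entry yields $\mu(\ell_2)\ge\mu(\ell_1)$, while non-negativity of the determinant of this $2\times 2$ PSD matrix yields
\[
|\kappa(\ell_2)-\kappa(\ell_1)|\le e^{-2\mu(\ell_1)}-e^{-2\mu(\ell_2)}.
\]

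This establishes (b) and shows that $\kappa$ is absolutely continuous with respect to $\mu$ with $|d\kappa/d\mu|\le 2e^{-2\mu(\ell)}$ $\mu$-a.e.; writing $d\kappa/d\mu=2\fa(\ell)e^{-2\mu(\ell)}$ defines $\fa$ with $|\fa|\le 1$ and, upon integration, yields \eqref{kappaformula}, proving (d). Part (c) then reduces to a direct matrix computation of $-\fA(i,\ell)^{-1}\partial_\mu\fA(i,\ell)\,j$ using the explicit form \eqref{Agaugematrixati} together with the Radon--Nikodym density just obtained; the off-diagonal entry produces exactly $-2\fa(\ell)$.

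For (a), I define $A$ and $B$ via \eqref{ABfromfa} using this $\fa$, and let $\tilde\fA(z,\ell)$ be the unique solution of the canonical system \eqref{canonicalsystemArov} with parameters $(A,B,\mu)$. By the compatibility stated in the introduction (to be developed in Section~\ref{section2}), $\tilde\fA$ is itself a continuous $j$-monotonic family in A-gauge with $\det\tilde\fA=1$ and $\tilde\fA(z,0)=I$; applying (b)--(d) to $\tilde\fA$ recovers the same data $(\mu,\fa)$, so $\fA(i,\ell)$ and $\tilde\fA(i,\ell)$ satisfy the same first-order ODE at $z=i$ with identical initial values and therefore coincide. The main obstacle is promoting this single-point equality to $\fA(z,\ell)=\tilde\fA(z,\ell)$ for every $z\in\bbC$. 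I expect to handle this via the infinitesimal rigidity of entire $j$-inner functions near the identity: any such $C(z)$ with $C(z)-I=O(\delta)$ must take the form $C(z)=I+(izA-B)j\,\delta+O(\delta^2)$ with $A\ge 0$ Hermitian and $B=-B^*$, so once the scale $\delta=\mu(\ell_2)-\mu(\ell_1)$ is fixed as above, the coefficients $(A,B)$ are read off from $C(i)$. This rigidity forces the infinitesimal increments of $\fA(z,\ell)$ to match those of $\tilde\fA(z,\ell)$ at every $z$, so $\fA$ itself satisfies the integral equation \eqref{canonicalsystemArov}, and Volterra uniqueness yields $\fA=\tilde\fA$ throughout $\bbC$.
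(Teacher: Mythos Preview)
Your treatment of (b), (d), (c) matches the paper's argument essentially verbatim: the A-gauge form of $\fA(i,\ell)$, the $j$-contractivity of the increment $\fA(i,\ell_1)^{-1}\fA(i,\ell_2)$, the resulting monotonicity of $\mu$ and Lipschitz bound on $\kappa$, and the explicit computation of $-\fA(i,\ell)^{-1}\partial_\mu\fA(i,\ell)j$.

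For (a), however, your argument has a genuine gap. The comparison with an auxiliary solution $\tilde\fA$ is harmless but unnecessary; the real difficulty is the ``infinitesimal rigidity'' step, which you leave as an expectation rather than an argument. Concretely, you assert that an entire $j$-inner $C(z)$ with $C(z)-I=O(\delta)$ must have the form $I+(izA-B)j\,\delta+O(\delta^2)$. But your hypothesis only gives $C(i)-I=O(\delta)$; you have not shown that smallness of $C(i)-I$ propagates to smallness of $C(z)-I$ uniformly on compacts, nor that the leading term is linear in $z$. Without this, you cannot even conclude that $\fA(z,\ell)$ is absolutely continuous with respect to $\mu$ for $z\ne i$, let alone identify its Radon--Nikodym derivative.

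The paper fills exactly this gap, and its mechanism is worth knowing: it applies the matrix Schwarz lemma for $j$-contractive functions to the pair $(z_*,z)=(i,z)$, obtaining a positive $4\times 4$ block kernel whose $(1,1)$ block is controlled by $j-C(i)jC(i)^*$. Positivity then forces the off-diagonal and $(2,2)$ blocks to be dominated in norm, which is precisely the propagation you need. This yields absolute continuity of $\fA(z,\ell)$ with respect to $\mu$ for every $z\in\bbC_+$, and shows that $\cM(z,\ell):=-\fA(z,\ell)^{-1}\partial_\mu\fA(z,\ell)j$ satisfies $i(\cM(z,\ell)+\cM(z,\ell)^*)/(z-\bar z)\ge 0$, i.e.\ $i\cM(\cdot,\ell)$ is a matrix Herglotz function. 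The $j$-unitarity on $\bbR$ then forces its Herglotz measure to vanish, so $i\cM(z,\ell)=A(\ell)z+iB(\ell)$, which recovers the canonical system directly. Your ``rigidity'' intuition is correct, but the Schwarz--Herglotz machinery is how one makes it rigorous.
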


Among the conclusions of Theorem~\ref{thmfamilytocansystem}, note that it follows from \eqref{canonicalsystemArov} that, if $\mu([\ell_1,\ell_2)) = 0$ for some $\ell_1 < \ell_2$, then $\fA(z,\ell)$ is constant on $\ell \in [\ell_1,\ell_2]$. In particular, $\mu$ is strictly increasing if and only if the family $\fA$ is, in a sense, strictly $j$-monotonic.

One of the goals of this paper is a systematic consideration of $j$-monotonic families/canonical systems in A-gauge. Part of the reason is that we consider this to be a natural point of view, which should be apparent from our proofs of certain general facts previously known in PdB gauge. For instance, since the Weyl disks are nested, they shrink as $\ell$ increases to a point or a disk. This alternative is independent of the choice of $z \in \bbC_+$, and determined in A-gauge completely by $\mu$:

\begin{proposition}\label{prop18}
 Consider a canonical system in A-gauge \eqref{canonicalsystemArov} and the associated Weyl disks.
\begin{enumerate}[(a)]
\item (``limit circle" case) If $\sup_{\ell} \mu(\ell) < \infty$, then $\cap_{\ell > 0} D(z,\ell)$ is a closed disk (with nonzero diameter) for every $z \in \bbC_+$.
\item (``limit point" case) If $\sup_{\ell} \mu(\ell) = \infty$, then $\cap_{\ell > 0} D(z,\ell)$ consists of a single point for every $z \in \bbC_+$.
\end{enumerate}
\end{proposition}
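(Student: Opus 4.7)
The plan is to first analyze the Weyl disks at the special point $z=i$ using Theorem~\ref{thmfamilytocansystem}(d), where the dichotomy is immediate, and then transfer the conclusion to all $z\in\bbC_+$.

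First, a direct calculation with the explicit lower-triangular form \eqref{Agaugematrixati} gives
\[
\fA(i,\ell)\,j\,\fA(i,\ell)^* = \begin{pmatrix} -e^{2\mu(\ell)} & e^{2\mu(\ell)}\overline{\kappa(\ell)} \\ e^{2\mu(\ell)}\kappa(\ell) & e^{-2\mu(\ell)} - e^{2\mu(\ell)}|\kappa(\ell)|^2\end{pmatrix},
\]
so the expression $(w,1)\fA(i,\ell)j\fA(i,\ell)^*(w,1)^*$ in \eqref{WeylDiskeqn} simplifies to $e^{-2\mu(\ell)} - e^{2\mu(\ell)}|w-\kappa(\ell)|^2$. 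Hence $D(i,\ell) = \{w\in\bbC : |w-\kappa(\ell)|\le e^{-2\mu(\ell)}\}$ is a closed Euclidean disk of radius $e^{-2\mu(\ell)}$ centered at $\kappa(\ell)$. The $j$-monotonic nesting $D(i,\ell_1)\supset D(i,\ell_2)$ for $\ell_1<\ell_2$ yields $|\kappa(\ell_2)-\kappa(\ell_1)|\le e^{-2\mu(\ell_1)}$, so $\bigcap_\ell D(i,\ell)$ is a closed disk of positive radius $e^{-2\sup\mu}$ in case (a) and the singleton $\{\lim_\ell\kappa(\ell)\}$ in case (b).

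For general $z\in\bbC_+$ the analogous algebra (using $\det\fA(z,\ell)=1$ together with the $j$-inner inequality $|\fA_{11}(z,\ell)|^2-|\fA_{12}(z,\ell)|^2 \ge 1$) identifies $D(z,\ell)$ as a closed disk of radius $r(z,\ell) = 1/(|\fA_{11}(z,\ell)|^2-|\fA_{12}(z,\ell)|^2)$. For case (a), I would apply Gronwall to the canonical system \eqref{canonicalsystemArov}: since $|\fa|\le 1$ bounds the entries of $A$ and $B$, the coefficient $\|izA(\ell)-B(\ell)\|$ is bounded uniformly in $\ell$ by some $C(z)<\infty$, and Gronwall yields $\|\fA(z,\ell)\|\le \exp(C(z)\mu(\ell)) \le \exp(C(z)\sup\mu) < \infty$. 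In particular $|\fA_{11}(z,\ell)|^2-|\fA_{12}(z,\ell)|^2$ is uniformly bounded, so $r(z,\ell)$ is bounded below by a positive constant and $\bigcap_\ell D(z,\ell)$ is a closed disk of positive diameter.

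The main obstacle is case (b) at general $z\in\bbC_+$: one must upgrade the pointwise collapse at $z=i$ to a collapse at every $z$. The plan is a normal families argument via the M\"obius parametrization of the Weyl disk: for each constant $\sigma\in\overline{\bbD}$, the function
\[
s_\sigma(z,\ell) = \frac{\sigma\,\fA_{22}(z,\ell) - \fA_{21}(z,\ell)}{\fA_{11}(z,\ell) - \sigma\,\fA_{12}(z,\ell)}
\]
is a Schur function of $z\in\bbC_+$ whose value at each $z$ lies in $D(z,\ell)$. By compactness of the Schur class, any subsequential limit as $\ell\to\infty$ is a Schur function $s_\sigma^\infty$ lying in $\bigcap_\ell D(\cdot,\ell)$ and taking the value $\kappa_\infty:=\lim_\ell\kappa(\ell)$ at $z=i$. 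If $\bigcap_\ell D(z_0,\ell)$ had positive diameter at some $z_0\in\bbC_+$, distinct choices of $\sigma$ would produce distinct Schur limits that agree at $z=i$; ruling this out is the classical $z$-independence of the limit point/limit circle dichotomy for canonical systems, and can be carried out by tracking the analytic dependence of the contractive M\"obius $\sigma\mapsto s_\sigma(z,\ell)$ on $z$ together with the identity $|\fA_{11}(z,\ell)|^2-|\fA_{12}(z,\ell)|^2 = 1+2\Im z\int_0^\ell \fA_{1\cdot}(z,s)A(s)\fA_{1\cdot}(z,s)^*\,d\mu(s)$, which relates divergence at $z=i$ (equivalent to $\sup\mu=\infty$) to divergence at all $z\in\bbC_+$.
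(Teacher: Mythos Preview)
Your treatment at $z=i$ and of case~(a) is correct and essentially the paper's argument: the lower-triangular form \eqref{Agaugematrixati} gives $D(i,\ell)$ as the closed disk of center $\kappa(\ell)$ and radius $e^{-2\mu(\ell)}$, and when $\sup_\ell\mu(\ell)<\infty$ a Gronwall bound (equivalently, solving the Volterra equation on a finite $\mu$-interval) keeps $\lVert\fA(z,\ell)\rVert$ uniformly bounded, so $r(z,\ell)$ stays bounded away from~$0$.

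The gap is in case~(b). Your normal-families sketch produces, for each $\sigma\in\overline{\bbD}$, a Schur limit $s_\sigma^\infty$ with $s_\sigma^\infty(i)=\kappa_\infty$; but two distinct Schur functions may well agree at a single interior point, so obtaining distinct limits at some $z_0$ is not yet a contradiction. Your closing sentence then invokes ``the classical $z$-independence of the limit point/limit circle dichotomy'', which is exactly the statement under proof, and the integral identity you write for $|\fA_{11}(z,\ell)|^2-|\fA_{12}(z,\ell)|^2$ has an integrand depending on $z$ through $\fA(z,\cdot)$, so it does not by itself transfer divergence from $z=i$ to other $z$. Closing this route would require a genuine variation-of-parameters comparison of solutions at different energies, which you have not supplied.

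The paper avoids all of this by reducing (b) to the contrapositive of what has already been proved. Case~(a) gives the implication ``$\bigcap_\ell D(i,\ell)$ is a disk $\Rightarrow$ $\bigcap_\ell D(z,\ell)$ is a disk for every $z\in\bbC_+$''; since Weyl disks are gauge-invariant and any continuous $j$-monotonic family can be put in A-gauge (Prop.~\ref{l07}), this implication holds for \emph{every} such family. Given $z_0\in\bbC_+$, apply it to the shifted family $\tilde\cA(z,\ell)=\fA((z-x)/y,\ell)$ with $x,y$ chosen so that $(i-x)/y=z_0$: the Weyl disk of $\tilde\cA$ at $z$ is $D((z-x)/y,\ell)$, so ``disk at $z_0$ for $\fA$'' is ``disk at $i$ for $\tilde\cA$'', hence ``disk everywhere for $\tilde\cA$'', in particular at $z=x+iy$, which is ``disk at $i$ for $\fA$'', hence $\sup_\ell\mu(\ell)<\infty$. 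The contrapositive is~(b).
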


In the limit point case, the intersection defines a function
\[
\{ s_+(z)\} = \bigcap_{\ell > 0} D(z,\ell)
\]
which is a Schur function, i.e., an analytic function $s_+ : \bbC_+ \to \overline{\bbD}$.

Until now, we were vague about the interval of definition of the $j$-monotonic family/canonical system, as the results hold on any interval. From now on, we always assume the canonical system is defined at least on the half-line $[0,\infty)$, and assume that it is in the limit point case. In other words, $\mu(\ell) \to \infty$ as $\ell \to \infty$. It is also convenient to assume that the $j$-monotonic family is not constant on any interval of $\ell$; this corresponds to assuming that $\mu$ gives positive measure to any interval.

Coefficient stripping can be implemented on the canonical system by the usual truncation of the parameters, or directly on the $j$-monotonic family: coefficient stripping by length $\ell > 0$ is the operation
\[
\{ \fA(z,l) \}_{l \in [0,\infty)} \mapsto \{ \fA(z,\ell)^{-1} \fA(z,l + \ell) \}_{l \in [0,\infty)}.
\]
The coefficient stripped transfer matrices have Schur functions $s_+(z,\ell)$.

\begin{proposition}[Ricatti equation] \label{propRicatti}
For an arbitrary canonical system in A-gauge, for any $z \in \bbC_+$, the family of Schur functions $s_+(z,\ell)$ is absolutely continuous with respect to $\mu$ and
\begin{equation}\label{Ricattiequation}
\partial_\mu s_+(z,\ell) = \begin{pmatrix} s_+(z,\ell) & 1 \end{pmatrix} (-iz A(\ell) + B(\ell) ) \begin{pmatrix} 1 \\ s_+(z,\ell) \end{pmatrix}.
\end{equation}
\end{proposition}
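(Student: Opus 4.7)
The plan is to express $s_+(z,\ell)$ as a linear-fractional transformation of the initial value $s_+(z,0)$ implemented by $\fA(z,\ell)$, and then differentiate that expression using the differential form of the canonical system; the result simplifies algebraically to \eqref{Ricattiequation}.

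First I would establish the Möbius intertwiner. By the cocycle property, $\fA(z,l+\ell) = \fA(z,\ell)\,\fA^{(\ell)}(z,l)$, where $\fA^{(\ell)}(z,l) := \fA(z,\ell)^{-1}\fA(z,l+\ell)$ is the transfer matrix of the coefficient-stripped family. Since $\fA(i,\cdot)$ is lower triangular with positive diagonal, so is $\fA^{(\ell)}(i,\cdot)$, so the stripped family is again in A-gauge, and by Theorem~\ref{thmfamilytocansystem}(b) its measure is $\mu^{(\ell)}(l) = \mu(\ell+l)-\mu(\ell)$, which still tends to infinity; hence the stripped system is in the limit point case and $s_+(z,\ell)$ is well-defined. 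Substituting the cocycle into \eqref{WeylDiskeqn} and writing $(v,1)\fA(z,\ell) = g_2\cdot(w,1)$ shows $v\in D(z,l+\ell)$ iff $w\in D^{(\ell)}(z,l)$; intersecting over $l$ and applying Proposition~\ref{prop18} to both families,
\[
s_+(z,\ell) \;=\; \frac{s_+(z,0)\,\fA_{11}(z,\ell) + \fA_{21}(z,\ell)}{s_+(z,0)\,\fA_{12}(z,\ell) + \fA_{22}(z,\ell)}.
\]
To see the denominator does not vanish, if it did then the $j$-inner inequality $|\fA^{(\ell)}_{11}|^2\ge 1+|\fA^{(\ell)}_{12}|^2$ valid on $\bbC_+$ would force the numerator to vanish as well, giving a nonzero row vector in the left null space of $\fA(z,\ell)$, contradicting $\det\fA\equiv 1$.

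Next, set $\vec g(z,\ell) := (s_+(z,0),1)\,\fA(z,\ell) = (g_1,g_2)$, so $s_+(z,\ell) = g_1/g_2$. The differential form \eqref{canonicalsystemgeneraldifferentialform} applied to \eqref{canonicalsystemArov} with coefficients \eqref{ABfromfa} reads $\partial_\mu\fA(z,\ell) = \fA(z,\ell)(izA(\ell)-B(\ell))\,j$ as a $\mu$-a.e.\ equality of Radon--Nikodym derivatives; hence $\vec g$ is locally $\mu$-absolutely continuous with $\partial_\mu \vec g = \vec g\,(izA-B)\,j$. Since $g_2$ is continuous and nonvanishing, the quotient rule yields local $\mu$-absolute continuity of $s_+(z,\cdot)$ with
\[
\partial_\mu s_+(z,\ell) \;=\; \frac{g_2\,\partial_\mu g_1 - g_1\,\partial_\mu g_2}{g_2^2}.
\]
A direct computation using $j=\begin{pmatrix}-1&0\\0&1\end{pmatrix}$ identifies the numerator with the quadratic form $(g_1,g_2)\,(-izA+B)\,(g_2,g_1)^T$; dividing by $g_2^2$ and substituting $g_1/g_2 = s_+(z,\ell)$ produces exactly \eqref{Ricattiequation}.

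The main obstacle is the bookkeeping around the Möbius intertwiner, namely the non-vanishing of $g_2$ and the equivalence of Weyl disk memberships under the linear-fractional action. Once this is in place, the derivation of the Ricatti equation from the ODE for $\vec g$ and the quotient rule is a short algebraic manipulation.
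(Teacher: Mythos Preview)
Your approach is essentially the same as the paper's: both write $(s_+(z,0),1)\fA(z,\ell)=(g_1,g_2)$ with $s_+(z,\ell)=g_1/g_2$ (the paper calls $g_2=\phi$) and then differentiate using $\partial_\mu\fA\,j=\fA(izA-B)$; the paper right-multiplies by $\binom{1}{s_+(z,\ell)}$ to kill the $\partial_\mu\phi$ term, which is algebraically equivalent to your quotient-rule computation. One minor point: your non-vanishing argument for $g_2$ is phrased in terms of $\fA^{(\ell)}$ where it should be about $\fA$---the clean route is simply that $s_+(z,0)\in D(z,\ell)$ gives $|g_2|^2-|g_1|^2\ge 0$, so $g_2=0$ forces $g_1=0$, contradicting $\det\fA=1$.
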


Conversely, the boundary values at $\infty$ of the family of Schur functions determine the parameters $\fa$:

\begin{proposition} \label{cor218}
For $\mu$-a.e. $\ell > 0$, the Schur functions have the nontangential boundary values
\begin{equation}\label{boundarySchur}
\lim_{\substack{ z\to \infty \\ \arg z \in [\delta,\pi-\delta]}} s_+(z,\ell) = \frac{ \fa(\ell) }{ 1 +\sqrt{1 - \lvert \fa(\ell) \rvert^2 }}, \qquad \forall \delta  > 0.
\end{equation}
\end{proposition}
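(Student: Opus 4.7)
The plan is to exploit the Ricatti equation of Proposition~\ref{propRicatti} at a Lebesgue point $\ell$ of $\fa$ with respect to $\mu$, after a Möbius change of variable that linearizes the leading-order-in-$z$ dynamics. The algebraic foundation is the factorization
\[
\overline{\fa}\,s^{2} - 2s + \fa \;=\; \overline{\fa}\,(s - w_{+})(s - w_{-}), \qquad w_{\pm}(\fa) := \frac{1 \pm \sqrt{1-|\fa|^{2}}}{\overline{\fa}},
\]
which identifies $w_{-}(\fa) = \fa/(1+\sqrt{1-|\fa|^{2}})$ as the target value, with $|w_{-}| \le 1$ and $|w_{+}| \ge 1$ (strict inequalities when $|\fa| < 1$). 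Substituting into \eqref{Ricattiequation} yields
\[
\partial_\mu s_{+} \;=\; i z\,\overline{\fa(l)}\,(s_{+} - w_{+}(\fa(l)))(s_{+} - w_{-}(\fa(l))) + \bigl(\overline{\fa(l)}\,s_{+}^{2} - \fa(l)\bigr),
\]
where the second summand is uniformly bounded by $2$ on $|s_{+}| \le 1$. By the Lebesgue differentiation theorem for the Borel measure $\mu$, almost every $\ell$ is a Lebesgue point of $\fa$.

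Fix such an $\ell$ with $|\fa(\ell)| < 1$ (the generic case) and introduce the \emph{frozen} Möbius variable $\tilde s(z, l) := (s_{+}(z, l) - w_{-}(\fa(\ell)))/(s_{+}(z, l) - w_{+}(\fa(\ell)))$, whose denominator is bounded below by $|w_{+}(\fa(\ell))| - 1 > 0$ and whose numerator by $2$, so $\tilde s$ is uniformly bounded in $z$ and $l$. A direct calculation using $\overline{\fa(\ell)}(w_{-}(\fa(\ell)) - w_{+}(\fa(\ell))) = -2\sqrt{1-|\fa(\ell)|^{2}}$ converts the Ricatti equation into the linear inhomogeneous ODE
\[
\partial_\mu \tilde s(z, l) \;=\; -2 i z \sqrt{1-|\fa(\ell)|^{2}}\,\tilde s(z, l) + E(z, l), \qquad |E(z, l)| \le C_{\ell}\bigl(|z|\,|\fa(l) - \fa(\ell)| + 1\bigr),
\]
where the error incorporates both the $\fa$-variation about $\ell$ and the bounded remainder term. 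Solving this ODE backward from $\ell + h$, $h > 0$, gives
\[
\tilde s(z, \ell) \;=\; e^{-\alpha h_{\mu}}\,\tilde s(z, \ell + h) - \int_{\ell}^{\ell+h} e^{-\alpha (\mu(l) - \mu(\ell))}\,E(z, l)\,d\mu(l),
\]
with $\alpha = -2 i z \sqrt{1-|\fa(\ell)|^{2}}$, $\Re\alpha = 2\sqrt{1-|\fa(\ell)|^{2}}\,\Im z > 0$, and $h_{\mu} = \mu(\ell + h) - \mu(\ell)$.

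As $z \to \infty$ with $\arg z \in [\delta, \pi-\delta]$, we have $\Im z \to \infty$ and $|z| \le (\sin\delta)^{-1}\Im z$, so the first summand decays exponentially. In the integral, the ``$+1$'' portion of $E$ contributes $O(1/\Im z)$; the ``$|z|\,|\fa - \fa(\ell)|$'' portion is handled by integration by parts against $\Phi(u) := \int_{0}^{u}|\fa - \fa(\ell)|\,d\mu$ in the variable $u = \mu(l) - \mu(\ell)$, using the Lebesgue point property $\Phi(u)/u \to 0$: for any $\varepsilon > 0$, choose $h$ small enough that $\Phi(u) \le \varepsilon u$ on $[0, h_{\mu}]$, and the nontangential bound $|z|/\Re\alpha \le (2\sin\delta\sqrt{1-|\fa(\ell)|^{2}})^{-1}$ converts the result into $O(\varepsilon)$ as $\Im z \to \infty$. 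Since $\varepsilon$ is arbitrary, $\tilde s(z, \ell) \to 0$, equivalent to $s_{+}(z, \ell) \to w_{-}(\fa(\ell))$. The degenerate cases are analogous but simpler: for $\fa(\ell) = 0$ use $\tilde s = s_{+}$ directly (no Möbius frozen variable needed); for $|\fa(\ell)| = 1$ the roots coalesce on $\partial\bbD$, so use $\tilde s = 1/(s_{+} - \fa(\ell))$, for which the Ricatti equation produces $\partial_\mu \tilde s = -i z\,\overline{\fa(\ell)} + O(1)$, forcing $\tilde s(z, \ell) \to \infty$ under backward integration, i.e.\ $s_{+}(z, \ell) \to \fa(\ell)$. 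The principal technical obstacle is the integral estimate, where the prefactor $|z|$ in $E$ must be absorbed against the exponential decay through combined use of the Lebesgue averaging of $\fa$ at $\ell$ and the nontangential bound $|z| \le C_\delta \Im z$.
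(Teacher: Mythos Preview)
Your approach differs from the paper's: the paper rescales via the fast variable $r=|z|\ell$, shows that the rescaled trajectories $s_+(z,r/|z|)$ converge to solutions of the frozen autonomous Ricatti equation (its Lemma~\ref{l1}), and then proves separately (Lemma~\ref{l2}) that the only solution of the frozen equation which stays in $\overline{\bbD}$ for all $r\ge 0$ is the constant $\fa(\ell)/(1+\sqrt{1-|\fa(\ell)|^2})$; a compactness argument then pins down the limit. You instead linearize by a frozen M\"obius substitution and estimate the resulting Duhamel integral directly. For $|\fa(\ell)|<1$ (and for $\fa(\ell)=0$) your computation is correct: the linear rate $-2iz\sqrt{1-|\fa(\ell)|^2}$, the error bound $|E|\le C_\ell(|z|\,|\fa(l)-\fa(\ell)|+1)$, and the integration-by-parts estimate using the Lebesgue point property together with $|z|\le(\sin\delta)^{-1}\Im z$ all close as you describe, and this route is more explicit than the paper's compactness argument.

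The case $|\fa(\ell)|=1$, however, contains a genuine gap. With $\tilde s=1/(s_+-\fa(\ell))$ one does \emph{not} obtain $\partial_\mu\tilde s=-iz\,\overline{\fa(\ell)}+O(1)$: since $\tilde s'=-s_+'\,\tilde s^{\,2}$, both the $\fa$-variation term $iz\bigl[(\overline{\fa(l)}-\overline{\fa(\ell)})s_+^2+(\fa(l)-\fa(\ell))\bigr]$ and the bounded remainder $\overline{\fa(l)}s_+^2-\fa(l)$ acquire a factor $\tilde s^{\,2}$, and $|\tilde s|=1/|s_+-\fa(\ell)|$ admits no a priori upper bound on $[\ell,\ell+h]$ (indeed, you are trying to show it tends to infinity). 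Even after writing $\overline{\fa(\ell)}s_+^2-\fa(\ell)=\overline{\fa(\ell)}(s_+-\fa(\ell))(s_++\fa(\ell))$ to downgrade part of this to a linear-in-$\tilde s$ term, the contribution $iz\cdot O(|\fa(l)-\fa(\ell)|)\cdot\tilde s^{\,2}$ persists. The structural reason is that at $|\fa(\ell)|=1$ your separation rate $\sqrt{1-|\fa(\ell)|^2}$ vanishes, so there is no $z$-dependent exponential factor to suppress $\tilde s(z,\ell+h)$ under backward integration. The paper's rescaling is precisely what rescues this: in the fast variable the frozen flow runs for arbitrarily long $r$-time, and Lemma~\ref{l2} shows that when $|a|=1$ every non-constant solution exits $\overline{\bbD}$ in finite $r$-time. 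Since the set $\{\ell:|\fa(\ell)|=1\}$ may have positive $\mu$-measure, this case cannot be discarded.
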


Note that the right-hand side of \eqref{boundarySchur} determines $\fa(\ell)$ uniquely; if we denote this right-hand side by $\fc(\ell)$, we have the mutually inverse formulas
\[
\fc(\ell) = \frac{\fa(\ell)}{1+ \sqrt{1 - \lvert \fa(\ell) \rvert^2}}, \qquad \fa(\ell) = \frac{2\fc(\ell)}{1+\lvert \fc(\ell) \rvert^2}
\]
which correspond to a continuous bijection from $\overline{\bbD}$ to itself. For $\fa(\ell)\in\bbD$ this corresponds to the following matrix identity
\[
\sqrt{\cV(\fa(\ell))}=\cV(\fc(\ell)),\qquad
\cV(a) := \frac 1{\sqrt{1-\lvert a \rvert^2}} \begin{pmatrix} 1 & - \bar a \\ -a & 1 \end{pmatrix}, \quad a \in\bbD.
\]

Using A-gauge, we give a new proof of the de Branges mean type theorem (see Section 39 in \cite{dB}) discovered by Krein \cite{Krein97}. 
\begin{theorem} \label{thjul10}
For an arbitrary canonical system \eqref{canonicalsystemgeneral}, the exponential type of the transfer matrix
\begin{equation}\label{9jul420}
\sigma(t):=\limsup_{z\to\infty}\frac{\log\|\cA(z,t)\|}{|z|}
\end{equation}
can be computed as 
\begin{equation}\label{9jul4}
\sigma(t)=\int_0^t\sqrt{\det \cP(\tau)}\,d\nu(\tau).
\end{equation}
\end{theorem}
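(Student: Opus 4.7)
The plan is to use the gauge freedom to reduce to A-gauge, and then exploit the structure of the A-gauge canonical system together with general properties of $j$-inner functions to compute the exponential type.

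\emph{Step 1 (Gauge reduction).} Both sides of \eqref{9jul4} are invariant under the gauge transformation \eqref{gaugetransformation}. The left side $\sigma(t)$ is invariant because $U(t)\in\SU(1,1)$ is $z$-independent, so $\|\cA(z,t)U(t)\|$ and $\|\cA(z,t)\|$ differ by a $z$-uniform bounded factor. For the right side, tracking how \eqref{canonicalsystemgeneral} transforms under $\cA\mapsto\cA U$ gives $\cP\mapsto U^{-1}\cP(U^{-1})^*$ (with an additional contribution to $\cQ$ coming from $\partial_\nu U$), which preserves $\det\cP$ since $|\det U|=1$ on $\SU(1,1)$. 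The scalar reparametrization $d\nu\mapsto c\,d\nu,\ \cP\mapsto\cP/c$ also preserves $\sqrt{\det\cP}\,d\nu$. By Proposition~\ref{l07}, it suffices to prove the formula in A-gauge, where $\cP=A$ and $\det A=1-|\fa|^2$ by \eqref{ABfromfa}; the target becomes
\[
\sigma(\ell) = \int_0^\ell \sqrt{1-|\fa(l)|^2}\,d\mu(l).
\]

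\emph{Step 2 (Mean type equals exponential type).} In A-gauge, $\fA(z,\ell)$ is entire and $j$-inner, and the reflection symmetry \eqref{reflectionsymmetry0} makes it of bounded type in both half-planes $\bbC_\pm$. By the classical theorem of Krein on entire functions of bounded type in both half-planes, the exponential type equals the mean type:
\[
\sigma(\ell) = \limsup_{y\to+\infty}\frac{\log\|\fA(iy,\ell)\|}{y}.
\]
At $z=iy$, the A-gauge canonical system reads $\partial_\mu\fA(iy,\ell) = \fA(iy,\ell)\,M(y,\ell)$ with $M(y,l) := -(yA(l)+B(l))\,j$. Using \eqref{ABfromfa} one computes $\tr M = 0$ and $\det M = -y^2(1-|\fa|^2) - |\fa|^2$, so $M(y,l)$ has real eigenvalues $\pm\lambda(y,l)$ with
\[
\lambda(y,l) = \sqrt{y^2(1-|\fa(l)|^2)+|\fa(l)|^2} = y\sqrt{1-|\fa(l)|^2} + O(y^{-1}), \qquad y\to\infty.
\]

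\emph{Step 3 (Imaginary-axis asymptotics and main obstacle).} What remains is the product-integral/WKB-type estimate
\[
\log\|\fA(iy,\ell)\| = y\int_0^\ell\sqrt{1-|\fa(l)|^2}\,d\mu(l) + o(y), \qquad y\to\infty,
\]
which together with Steps~1 and 2 gives \eqref{9jul4}. The upper bound follows from a multiplicative Gronwall estimate after partitioning $[0,\ell]$ into short intervals on which $M(y,\cdot)$ is approximately constant and diagonalizable; the matching lower bound is obtained by testing $\fA(iy,\ell)$ against a unit vector aligned with the top eigendirection of $M(y,\cdot)$ across the interval. The main obstacle is uniformity in $y$ on subsets where $|\fa(l)|$ is close to $1$: there $\lambda(y,l)/y\to 0$, the eigenvectors of $M(y,l)$ coalesce, and $M(y,l)$ degenerates to a nonnormal matrix, so a naive WKB ansatz breaks down. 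One way around this is to split $[0,\ell]$ along $\{|\fa|\le 1-\epsilon\}$ versus $\{|\fa|>1-\epsilon\}$, prove the asymptotic on the first piece via standard WKB, and on the second piece use the crude bound $\lambda(y,l)\le y$ together with subadditivity of $\log\|\cdot\|$ under the cocycle factorization, finally letting $\epsilon\to 0$ and using dominated convergence of $\int\sqrt{1-|\fa|^2}\,d\mu$ over the exceptional set.
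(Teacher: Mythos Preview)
Your Step~1 is correct and matches the paper. Your Step~2 invocation of Krein's theorem is also used in the paper, at the very end. The gap is in Step~3.

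The WKB sketch does not close. The crude Gronwall bound gives $\exp\bigl(\int_0^\ell\|M(y,l)\|\,d\mu\bigr)$, and $\|M(y,l)\|$ is of order $y$ regardless of $|\fa(l)|$, so this is useless without passing to an eigenbasis. But the eigenvectors of $M(y,l)$ become nearly parallel as $|\fa(l)|\to 1$, so the change-of-basis matrices blow up and the piecewise-diagonalization argument accumulates uncontrolled transition costs. Your proposed fix, splitting along $\{|\fa|>1-\epsilon\}$, has two problems: first, this set need not be a union of intervals, so the cocycle factorization does not apply to it; second, even if it were an interval, your ``crude bound $\lambda(y,l)\le y$'' contributes $\mu(\{|\fa|>1-\epsilon\})$ to the upper bound for $\sigma(\ell)$, and there is no reason for this measure to vanish as $\epsilon\to 0$ (indeed $|\fa|$ may equal $1$ on a set of positive $\mu$-measure). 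The lower bound sketch is even vaguer: there is no single test vector ``aligned with the top eigendirection across the interval'' when that eigendirection varies with $l$.

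The paper avoids matrix WKB entirely by reducing to a \emph{scalar} quantity. From the Ricatti-equation machinery (Proposition~\ref{propRicatti}) one extracts a scalar function $\phi(z,\ell)$ defined by $\begin{pmatrix}s_+(z)&1\end{pmatrix}\fA(z,\ell)=\phi(z,\ell)\begin{pmatrix}s_+(z,\ell)&1\end{pmatrix}$, which satisfies the scalar ODE
\[
\partial_\mu\log\phi(z,\ell) = -(iz+1)\,s_+(z,\ell)\,\overline{\fa(\ell)} + iz.
\]
Integrating and setting $z=iy$ gives an explicit integral formula for $\frac{1}{y}\log\frac{1}{\phi(iy,\ell)}$; the limit as $y\to\infty$ is then read off from Proposition~\ref{cor218}, which supplies $s_+(iy,l)\to\frac{\fa(l)}{1+\sqrt{1-|\fa(l)|^2}}$ and produces exactly $\int_0^\ell\sqrt{1-|\fa|^2}\,d\mu$. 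To pass from $\phi$ back to $\|\fA\|$, one checks that $1/\phi(z,\ell)=a_{11}(z,\ell)\bigl(1-\tfrac{a_{12}}{a_{11}}s_+\bigr)$, where the second factor has positive real part (hence is outer) by $j$-contractivity, and that $|a_{11}|$ dominates all entries of $\fA$ in $\bbC_+$. The point is that the nontrivial asymptotic input is the boundary behaviour of the Schur functions at $\infty$ (Theorem~\ref{thmSchurlimit}), not a product-integral estimate.
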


In particular, in A-gauge \eqref{9jul4} is of the form
\[
\sigma(\ell)=\int_0^\ell\sqrt{1-|\fa(l)|^2}d\mu(l).
\]

\begin{remark}
In discrete systems, the parameter $\ell$ usually corresponds to a polynomial degree; likewise, in some continuous systems $\ell$ corresponds to the exponential type of $\fA(z,\ell)$, i.e., $\s(\ell)=\ell$, and $\mu$ is absolutely continuous w.r.t. to $\ell$, see e.g. \cite{BLY2, DEY}. In this case, $\mu$ is expressed in terms of the continuous Verblunsky parameter as 
\[
 \mu(\ell)=\int_0^\ell\frac{d l}{\sqrt{1-|\fa(l)|^2}}.
\]
\end{remark}

There are two associated existence results. One is the existence of a canonical system on an interval $[0,L]$ corresponding to a prescribed $j$-inner function $\cA(z,L)$; this inverse problem was the main result of Potapov \cite{P60}, and was proved in a much more general setting. Another inverse problem, proved by de Branges \cite{dB}, is the existence of a canonical system on $[0,\infty)$ with prescribed spectral Schur function $s_+$. Of course, these are not the only ways of obtaining canonical systems. In \cite{BLY2}, we directly construct reflectionless canonical systems in A-gauge and prove that they have the desired spectral functions, and we do not use the abstract existence results.

It is common in the theory of canonical systems to parametrize the line in a special way; in PdB-gauge, the standard parametrization involves taking $\tr \cP = 1$ and setting $\nu$ to be Lebesgue measure. However, if $j$-monotonicity is viewed as the central notion, it is more natural to formulate the theory up to a reparametrization of the real line; moreover, in direct spectral theory, the parametrization is already fixed by the underlying operator structure (and usually corresponds to the exponential type of the transfer matrix of order one or $1/2$); finally, in inverse spectral theory, it will be natural to use a parametrization which provides a linearization of the shift under the generalized Abel-Jacobi mapping \cite{BLY2}. For these reasons, our formulations don't fix a parametrization. Our formulation for canonical systems offers the same flexibility: if $g: [0,\infty) \to [0,\infty)$ is a monotone bijection, then the reparametrization
\begin{equation}\label{reparametrization}
\tilde \mu(\ell) = \mu(g(\ell)), \qquad \tilde \fa(\ell) = \fa(g(\ell))
\end{equation}
affects the solution by $\tilde \fA(z,\ell) = \fA(z,g(\ell))$. In particular, observables like the Weyl disks and the Schur function $s_+(z)$ do not change. By a deep result of de Branges, translated into A-gauge, this is the only nonuniqueness:

\begin{theorem}[de Branges uniqueness theorem in A-gauge] \label{theoremdeBrangesuniqueness}
If two canonical systems in A-gauge with parameters $(\mu,\fa)$ and $(\tilde \mu ,\tilde \fa)$ have the same spectral function $s_+$, then there exists a monotone bijection $g: [0,\infty) \to [0,\infty)$ such that \eqref{reparametrization} holds.
\end{theorem}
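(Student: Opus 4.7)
The strategy is to reduce the theorem to the classical de Branges uniqueness theorem for canonical systems in Potapov--de Branges gauge, which I would treat as a black box, and then transfer the resulting reparametrization back to Arov gauge using Proposition~\ref{l07}. Write $(\mu_1,\fa_1):=(\mu,\fa)$ and $(\mu_2,\fa_2):=(\tilde\mu,\tilde\fa)$, and let $\fA_i$ denote the corresponding A-gauge transfer matrix. Since $0\in\bbR$, one has $\fA_i(0,\ell)\in\SU(1,1)$, so
\[
\cA_i(z,\ell):=\fA_i(z,\ell)\,\fA_i(0,\ell)^{-1}
\]
is a continuous $j$-monotonic family with $\cA_i(0,\ell)=I$, i.e., a PdB-gauge canonical system. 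The Weyl disks~\eqref{WeylDiskeqn} are gauge-independent, so both $\cA_1$ and $\cA_2$ inherit the common spectral Schur function $s_+$.

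Next, I would invoke the classical de Branges uniqueness theorem in PdB gauge to conclude that there exists a monotone bijection $g:[0,\infty)\to[0,\infty)$ with
\[
\cA_2(z,\ell)=\cA_1(z,g(\ell)),\qquad z\in\bbC,\;\ell\ge0.
\]
If the classical result is stated only for trace-normalized Hamiltonians against Lebesgue measure, one first reparametrizes each $\cA_i$ into that standard form, applies the theorem there, and composes the three resulting reparametrizations to obtain $g$. This is where I expect most of the technical work.

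With $g$ in hand, I would transfer back to A-gauge. The reparametrized family $\ell\mapsto\cA_1(z,g(\ell))$ is still in PdB gauge, and its A-gauge transform in the sense of Proposition~\ref{l07} is $\fA_1(z,g(\ell))$, because the gauge-fixing multiplier commutes with reparametrization (the A-gauge condition is pointwise in $\ell$, and the uniqueness in Proposition~\ref{l07} transports under $\ell\mapsto g(\ell)$). By the uniqueness assertion in Proposition~\ref{l07} applied to $\cA_2$, that A-gauge family must coincide with $\fA_2(z,\ell)$, hence
\[
\fA_2(z,\ell)=\fA_1(z,g(\ell)),\qquad z\in\bbC,\;\ell\ge0.
\]
Theorem~\ref{thmfamilytocansystem}(b) then reads off $\mu_2(\ell)=\log\fA_{2,11}(i,\ell)=\log\fA_{1,11}(i,g(\ell))=\mu_1(g(\ell))$, and part~(c), combined with the chain rule for Radon--Nikodym derivatives under the pushforward $\mu_2=\mu_1\circ g$, yields $\fa_2(\ell)=\fa_1(g(\ell))$ for $\mu_2$-a.e.\ $\ell$. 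This is exactly~\eqref{reparametrization}.

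The principal obstacle is the invocation of PdB-gauge uniqueness in the measure-theoretic formulation of the present paper: standard references typically fix Lebesgue measure and a trace-normalized Hamiltonian, so one must first reduce both $\cA_i$ to that canonical form, apply the classical theorem there, and then verify that the composition of the intervening reparametrizations gives an admissible monotone bijection $g:[0,\infty)\to[0,\infty)$. Once that reduction step is carried out, the remainder is bookkeeping through the gauge and reparametrization symmetries provided by Proposition~\ref{l07} and Theorem~\ref{thmfamilytocansystem}.
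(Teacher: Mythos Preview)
Your approach is correct and essentially identical to the paper's: both pass to PdB-gauge via $\cA_i(z,\ell)=\fA_i(z,\ell)\fA_i(0,\ell)^{-1}$, invoke the classical de Branges uniqueness theorem to obtain the reparametrization, and then use the uniqueness assertion in Proposition~\ref{l07} to transfer back to A-gauge. The paper's proof is more terse (three sentences), omitting your explicit bookkeeping via Theorem~\ref{thmfamilytocansystem}(b),(c) and your discussion of reducing to trace-normalized form, but the underlying argument is the same.
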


Canonical systems can also be considered in a two-sided setting (parametrized by $\ell \in \bbR$), with the same definition of PdB-gauge and A-gauge. A reflection of the real line, which preserves $j$-monotonicity (see Section 3), is given by
\begin{equation}\label{canonicalsystemreflectiongeneral}
\{ \cA(z,t) \}_{t\in\bbR} \mapsto \{ j_1 \cA(z,-t) j_1 \}_{t\in\bbR}, \qquad j_1 = \begin{pmatrix} 0 & 1 \\ 1 & 0 \end{pmatrix}.
\end{equation}
This allows us to define $s_-$ as the Schur function which corresponds to the system $\{ j_1 \cA(z,-t) j_1 \}_{t \ge 0}$. However, the operation \eqref{canonicalsystemreflectiongeneral} does not preserve A-gauge. An additional factor will appear; we denote
\[
v(z) = \frac{z-i}{z+i}, \qquad z \in \bbC.
\]

\begin{proposition} \label{lemmanegativehalflineAgauge}
For a full-line canonical system in A-gauge $\{\fA(z,\ell)\}_{\ell\in\bbR}$ with coefficients $(\mu,\fa)$, its Schur function $s_-$ is given by $s_-(z) = v(z) \overleftarrow{s_+}(z)$, where $\overleftarrow{s_+}$ denotes the canonical system in A-gauge
\begin{equation}\label{3jun1}
\overleftarrow{\fA}(z,\ell) =  \begin{pmatrix} v(z) & 0 \\ 0 & 1 \end{pmatrix} j_1 \fA(z,-\ell) j_1 \begin{pmatrix}v(z) & 0 \\ 0  & 1 \end{pmatrix}^{-1}
\end{equation}
This system has A-gauge parameters $\overleftarrow{\mu}(\ell) = - \mu(-\ell)$, $\overleftarrow{\fa}(\ell) = \overline{\fa(-\ell)}$.
\end{proposition}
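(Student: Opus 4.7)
The plan is to identify the reflected family $\tilde\fA(z,\ell) := j_1 \fA(z,-\ell) j_1$ for $\ell\ge 0$, whose Schur function is $s_-$ by definition, and then apply the $z$-dependent similarity transformation by $M(z) := \begin{pmatrix} v(z) & 0 \\ 0 & 1 \end{pmatrix}$ to bring it into A-gauge. A short calculation using $j_1 j j_1 = -j$ shows $\tilde\fA(z, \ell_1)^{-1}\tilde\fA(z, \ell_2) = j_1\fA(z,-\ell_1)^{-1}\fA(z,-\ell_2)j_1$ is $j$-inner for $\ell_1 < \ell_2$, so $\tilde\fA$ is a continuous $j$-monotonic family with $\tilde\fA(z,0)=I$. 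However, \eqref{Agaugematrixati} forces $\tilde\fA(i, \ell)$ to be upper triangular, so $\tilde\fA$ itself is not in A-gauge.

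Setting $\overleftarrow\fA := M\tilde\fA M^{-1}$, the similarity leaves the diagonal entries unchanged and multiplies the $(1,2)$ and $(2,1)$ entries by $v(z)$ and $v(z)^{-1}$, respectively. The potential pole at $z = i$ in the $(2,1)$ entry is cancelled by the zero of $\tilde\fA_{21}(z, \ell) = \fA_{12}(z, -\ell)$ at $z = i$ (guaranteed by A-gauge of $\fA$), so $\overleftarrow\fA$ is entire. One also has $\det\overleftarrow\fA = 1$, $\overleftarrow\fA(z, 0) = I$, and $\overleftarrow\fA(i, \ell)$ is lower triangular with positive diagonal $(e^{-\mu(-\ell)}, e^{\mu(-\ell)})$, so $\overleftarrow\fA$ is a genuine candidate for a canonical system in A-gauge.

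The main step is to verify that $\overleftarrow\fA$ solves a canonical system in A-gauge with the claimed parameters; its $j$-monotonicity then follows automatically from the general theory developed in Section~\ref{section2}. Starting from \eqref{canonicalsystemArov} for $\fA$, I would $j_1$-conjugate both sides and perform the change of variable $\tau = -\ell'$, which converts the $[-\ell, 0]$-integral against $d\mu(\tau)$ into a $[0, \ell]$-integral against the positive measure $d\overleftarrow\mu(\ell') := -d\mu(-\ell')$ (so that $\overleftarrow\mu(\ell) = -\mu(-\ell)$); this produces a canonical system equation for $\tilde\fA$ with coefficients $j_1 A(-\ell') j_1$ and $j_1 B(-\ell') j_1$. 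Left-right conjugating by $M$ (using $M j M^{-1} = j$, since both are diagonal) then yields the canonical system equation for $\overleftarrow\fA$, and a direct computation based on the identities $v(z)(1 - iz) = -(iz + 1)$ and $(iz + 1)/v(z) = -(1 - iz)$ (immediate from $v(z) = (z-i)/(z+i)$) shows that its coefficients match $iz\overleftarrow A - \overleftarrow B$ from \eqref{ABfromfa} with $\overleftarrow\fa(\ell) = \overline{\fa(-\ell)}$; the measure identification $\overleftarrow\mu(\ell) = -\mu(-\ell)$ is independently confirmed by Theorem~\ref{thmfamilytocansystem}(b).

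Finally, to establish $s_-(z) = v(z)\overleftarrow{s_+}(z)$, I would use the linear-fractional action $T_\fA(w) := (w\fA_{11} + \fA_{21})/(w\fA_{12} + \fA_{22})$, which satisfies $T_{\fA\fB} = T_\fB \circ T_\fA$, together with the identity $D(z, \ell) = T_{\fA(z, \ell)^{-1}}(\overline{\bbD})$ (these disks shrink to the Schur function in the limit point case). Since $T_{M(z)}(w) = v(z) w$ and $T_{M(z)^{-1}}(w) = w/v(z)$, the composition law gives
\[
T_{\overleftarrow\fA(z, \ell)^{-1}}(w) = \frac{1}{v(z)}\, T_{\tilde\fA(z, \ell)^{-1}}\bigl(v(z) w\bigr).
\]
Passing to $\ell \to \infty$ for any $w \in \overline{\bbD}$ (noting $v(z) w \in \overline{\bbD}$ for $z \in \bbC_+$) yields $\overleftarrow{s_+}(z) = s_-(z)/v(z)$. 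The main obstacle is that $M(z)$ is $z$-dependent and not an $\SU(1,1)$ gauge transformation: it fails to preserve the $j$-isometry property for $z \in \bbC_+$, so the $j$-monotonicity of $\overleftarrow\fA$ cannot be obtained directly from that of $\tilde\fA$, and must instead be deduced indirectly via the canonical system equation and the careful bookkeeping of the $v(z)$ factors in the coefficient identities.
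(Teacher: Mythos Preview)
Your proposal is correct and follows essentially the same route as the paper: derive the canonical system equation for the reflected family $j_1\fA(z,-\ell)j_1$, observe that its coefficient matrix is upper-triangular at $z=i$, conjugate by $M(z)=\begin{pmatrix} v(z) & 0 \\ 0 & 1\end{pmatrix}$ to restore A-gauge, read off the parameters $\overleftarrow\mu(\ell)=-\mu(-\ell)$ and $\overleftarrow\fa(\ell)=\overline{\fa(-\ell)}$, and then compare Weyl disks (equivalently, the linear-fractional action) to obtain $s_-(z)=v(z)\overleftarrow{s_+}(z)$. Your write-up is more explicit than the paper's on two points---the removability of the apparent pole of $\overleftarrow\fA_{21}$ at $z=i$, and the fact that $j$-monotonicity of $\overleftarrow\fA$ is recovered from the canonical system structure rather than from the (non-$\SU(1,1)$) conjugation---but these are elaborations of the same argument, not a different method.
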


In summary, a two-sided canonical system in A-gauge is encoded by two Schur functions $s_\pm$, which are arbitrary   up to the single normalization condition $s_-(i) = 0$. In \cite{BLY2} this serves as a natural compactification condition.

This paper is also the first part of a series in which we develop a general theory of reflectionless one-dimensional systems and the interplay between the reflectionless property and almost periodicity of parameters of the canonical system. The reflectionless property is a certain pseudocontinuation relation between the two spectral functions which encode the two half-line restrictions of the operator; in the setting for Schur functions, the canonical system is \emph{reflectionless on $\sE$} if
\[
\overline{s_+(\xi+i0)} = s_-(\xi + i0), \qquad \text{a.e. }\xi \in\sE.
\]
It was first observed as a property of periodic operators and finite gap quasiperiodic operators; by Kotani theory \cite{Kot82}, it is a general feature of ergodic operators with zero Lyapunov exponent on the spectrum. Remling proved that the reflectionless property is a general property of right limits of operators with absolutely continuous spectrum, in the setting of Jacobi matrices \cite{Rem11} and Schr\"odinger operators \cite{Rem07}, with an extension to canonical systems in PdB-gauge by Acharya \cite{Ach16}.  In the current paper, we extend Remling's theorem to canonical systems in a gauge-independent setting. As a corollary, almost periodicity of canonical system coefficients implies the reflectionless property on the a.c. spectrum:

\begin{theorem}\label{t001new}
Assume that for all $L>0$, the functions
\[
\int_{t}^{t+L} \cP(\tau)\,d\nu(\tau), \quad \int_{t}^{t+L} \cQ(\tau)\,d\nu(\tau)
\]
are uniformly almost periodic matrix functions of $t$. Then the canonical system \eqref{canonicalsystemgeneral} is reflectionless on its a.c.\ spectrum $\{\xi \mid \lvert s_+(\xi+i0) \rvert < 1 \} \cup \{ \xi \mid \lvert s_-(\xi +i0) \rvert < 1\}$.
\end{theorem}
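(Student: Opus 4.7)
The plan is to derive Theorem~\ref{t001new} as a corollary of the extension of Remling's theorem to canonical systems (the central result of the paper), combined with the Bochner characterization of almost periodic functions. The extended Remling theorem will assert that every right limit of the system is reflectionless on the a.c.\ spectrum of the original system; almost periodicity will be used to exhibit the system as a right limit of itself, forcing the reflectionless property on the original.

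First, I would fix a natural topology for right limits of canonical systems in which convergence of coefficients $(\cP_n,\cQ_n,\nu_n)\to(\cP,\cQ,\nu)$ amounts to uniform convergence of the integrated windows
\[
\int_s^{s+L}\cP_n(\tau)\,d\nu_n(\tau),\qquad \int_s^{s+L}\cQ_n(\tau)\,d\nu_n(\tau)
\]
for every $L>0$ on compact sets of $s$. In this topology, the hypothesis of the theorem together with Bochner's theorem implies that the orbit of translates $\{(\cP(\cdot+t),\cQ(\cdot+t))\}_{t\in\bbR}$ is relatively compact, and, crucially, for every $\epsilon, N>0$ the set of $(\epsilon,N)$-almost periods is relatively dense in $\bbR$. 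I would use this to select a sequence $t_n\to+\infty$ of almost periods along which the translated coefficients converge back to the original $(\cP,\cQ)$. In other words, the original canonical system is recovered as a right limit of itself along $t_n$.

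Applying the paper's extended Remling theorem to this particular right limit yields immediately that $(\cP,\cQ)$ is reflectionless on its own a.c.\ spectrum $\{\xi\mid |s_+(\xi+i0)|<1\}\cup\{\xi\mid |s_-(\xi+i0)|<1\}$, which is exactly the statement of Theorem~\ref{t001new}. The main difficulty is not in this corollary but in the extended Remling theorem itself: one must verify that, in the right-limit topology described above, the transfer matrices converge uniformly on compact subsets of $\bbC$ and the Schur functions $s_\pm$ of the limit system satisfy the pseudocontinuation relation $\overline{s_+(\xi+i0)}=s_-(\xi+i0)$ on the a.c.\ support of the pre-limit system. The gauge-independent formalism developed earlier in the paper — in particular the characterization of the A-gauge Schur data via Propositions~\ref{propRicatti}, \ref{cor218}, and \ref{lemmanegativehalflineAgauge} — should be what allows this step to go through without reference to PdB-gauge as in Acharya's earlier treatment. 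Once that theorem is in place, Theorem~\ref{t001new} is a pure Bochner-compactness consequence of almost periodicity.
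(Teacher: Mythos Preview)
Your proposal is correct and matches the paper's approach exactly: the paper proves a right-limit lemma (Lemma~\ref{l72}) asserting that any locally uniform limit $\cB(z,\ell)=\lim_n\cA^{[\tau_n]}(z,\ell)$ along $\tau_n\to+\infty$ is reflectionless on the a.c.\ spectrum of $\cA$, and then uses almost periodicity to choose $\tau_n\to+\infty$ along which the shifted coefficients return to the original ones, exhibiting $\cA$ as a right limit of itself. The one inaccuracy is your guess about the machinery behind the right-limit lemma: it is not Propositions~\ref{propRicatti}, \ref{cor218}, \ref{lemmanegativehalflineAgauge} that drive it, but rather the Breimesser--Pearson theorem (Theorem~\ref{BPthm}), which compares harmonic measures $\omega_{s_-(x,\ell)}(\bar S)$ and $\omega_{s_+(x,\ell)}(S)$ on the a.c.\ spectrum as $\ell\to+\infty$ and is itself proved via the A-gauge Weyl disk estimates.
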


In particular, Theorem~\ref{t001new} applies in A-gauge, and applies to Dirac operators.

This is the first paper in our theory of reflectionless canonical systems. In the second paper \cite{BLY2}, we consider the opposite direction, and construct reflectionless canonical systems with a Dirichlet-regular Widom spectrum $\sE$ with the DCT property. 
For this class of spectra $\sE$, we prove that reflectionless canonical systems in A-gauge always have almost periodic parameters, but that this is not always true in PdB-gauge, nor in a gauge normalized at $\infty$. For that problem, A-gauge is the correct general setting, since it corresponds to a normalization with respect to a point which is always an internal point of the resolvent domain.  In this sense, the current paper also serves as a foundation for the construction in \cite{BLY2}.

\bigskip
\noindent
\textbf{Acknowledgment.}  The work of R.B. in Sections 6,7 is supported by grant RScF 19-11-00058 of the Russian Science Foundation. In the rest of the paper,
M.L. was supported in part by NSF grant DMS--1700179 and 
P.Y. was supported by the Austrian Science Fund FWF, project no: P32885-N.

\section{Canonical systems in Arov gauge and $j$-monotonic families} \label{section2}

In this section, we prove the basic facts about $j$-monotonic families and canonical systems from the introduction. We denote by $\SL(2,\bbC)$ the set of all $2\times2$ complex matrices with unit determinant.  A $2\times 2$ matrix $T$ is called
\begin{itemize}
	\item $j$-expanding, if $T j T^* - j \ge 0$;
	\item $j$-unitary, if $T j T^* - j = 0$;
	\item $j$-contractive, if $T j T^* - j \le 0$.
\end{itemize}

\begin{lemma} \label{lemmaAgaugematrixi}
A lower triangular matrix $T  \in \SL(2,\bbC)$ with positive diagonal coefficients is $j$-contractive if and only if it is of the form
\[
T = \begin{pmatrix} \lambda & 0 \\ h & \lambda^{-1} \end{pmatrix}
\]
 for some $\lambda \ge 1$ and $\lvert h \rvert \le \lambda - 1/\lambda$.  Moreover, it is $j$-unitary if and only if $T = I$. 
\end{lemma}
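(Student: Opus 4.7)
The plan is a direct computation. Since $T \in \SL(2,\bbC)$ is lower triangular with positive diagonal, I can write
\[
T = \begin{pmatrix} \lambda & 0 \\ h & \lambda^{-1} \end{pmatrix}, \qquad \lambda > 0, \quad h \in \bbC,
\]
so the unit determinant and positivity of diagonal entries are built in. The problem then reduces to checking when the Hermitian matrix $j - T j T^*$ is positive semidefinite.

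A short computation gives
\[
j - T j T^* = \begin{pmatrix} \lambda^2 - 1 & \lambda \bar h \\ \lambda h & 1 + |h|^2 - \lambda^{-2} \end{pmatrix}.
\]
A $2\times 2$ Hermitian matrix is positive semidefinite if and only if both diagonal entries and its determinant are nonnegative. The condition on the $(1,1)$ entry is precisely $\lambda \ge 1$, which then automatically implies $1 + |h|^2 - \lambda^{-2} \ge 0$. For the determinant, a straightforward expansion yields
\[
\det\bigl( j - T j T^*\bigr) = (\lambda^2 - 1)\bigl(1 - \lambda^{-2}\bigr) - |h|^2 = \bigl(\lambda - \lambda^{-1}\bigr)^2 - |h|^2,
\]
so nonnegativity is equivalent to $|h| \le \lambda - \lambda^{-1}$. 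This gives the announced characterization.

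For the $j$-unitary statement, equality $j = T j T^*$ forces each diagonal entry and the determinant of the matrix above to vanish. Vanishing of the $(1,1)$ entry gives $\lambda = 1$, and then the determinant condition forces $h = 0$, so $T = I$. The main (and really only) obstacle here is just the bookkeeping of the $2 \times 2$ computation; once expanded, the factorization $(\lambda^2 - 1)(1 - \lambda^{-2}) = (\lambda - \lambda^{-1})^2$ makes the result transparent.
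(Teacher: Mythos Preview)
Your proof is correct and follows essentially the same approach as the paper's own proof: write $T$ in the stated form, compute $j - TjT^*$ explicitly, and read off the positive semidefiniteness conditions from the $(1,1)$ entry and the determinant. Your additional observation that $(\lambda^2-1)(1-\lambda^{-2}) = (\lambda-\lambda^{-1})^2$ makes the inequality $|h| \le \lambda - \lambda^{-1}$ slightly more transparent, but otherwise the arguments coincide.
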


\begin{proof}
$T$ is obviously of this form for some $\lambda > 0$ and $h \in \bbC$. Then
\begin{equation}\label{TjTcalc}
j-T j T^* = \begin{pmatrix} \lambda^2 - 1
  & \lambda \bar h \\
\lambda h  & 1 - \lambda^{-2} + \lvert h\rvert^2
\end{pmatrix}.
\end{equation}
This matrix is positive definite if and only if $\lambda^2 - 1 \ge 0$ and $\det (j-T j T^*) \ge 0$. Since $\det (j-T j T^*) = (1-\lambda^{-2})(\lambda^2-1) - \lvert h\rvert^2$, the criterion for $j$-contractivity follows. The criterion for $j$-unitarity is obvious from \eqref{TjTcalc}.
\end{proof}

We can now describe the gauge transformation of an arbitrary $j$-monotonic family into A-gauge:

\begin{proof}[Proof of Prop.~\ref{l07}]
Let us start with a $j$-contractive matrix $T\in \SL(2,\bbC)$. Since $TjT^* \le j$,  for the vector $(a, b) = (1 , 0)T$ we have 
\[
-|a|^2 + |b|^2 = (a, b)j(a, b)^* \le (1, 0)j(1, 0)^* = -1.
\]	
In particular, $|a|>|b|$ and we can define
\begin{equation}\label{4nov201}
U = \frac 1{\sqrt{ \lvert a \rvert^2 - \lvert b \rvert^2 }} \begin{pmatrix} \bar a & - b \\ - \bar b &  a \end{pmatrix}
\end{equation}
It is straightforward to verify that $U \in \SU(1,1)$ and that $(1,0)TU = (\lambda, 0)$ with $\lambda = \sqrt{ \lvert a \rvert^2 - \lvert b \rvert^2 }>0$. Then $TU$ is in lower-triangular form and $(TU)_{11} > 0$. Since $\det(TU) = \det T = 1$, the diagonal entries of $TU$ are positive.

Note that the construction of $U$ depends continuously on $T$. If this construction is applied to $T = \cA(i,t)$, it gives a one-parameter family $U(t) \in \SU(1,1)$. If $\cA(z,t)$ is continuous, so is $U(t)$.

Assume that $T$ is $j$-contractive with $\det T = 1$ and $U_1, U_2 \in \SU(1,1)$ are such that $TU_1$, $TU_2$ are lower triangular with positive diagonal terms. Then $U = (TU_1)^{-1} (TU_2) = U_1^{-1} U_2$ has the same property and is $j$-unitary. By applying Lemma~\ref{lemmaAgaugematrixi}  we conclude $U_1 = U_2$.
\end{proof}

Now let us take the perspective of canonical systems.  The solution of any canonical system is a continuous $j$-monotonic family. Indeed, for $t_1\le t_2$,
\[
\cA(z, t_1) j \cA(z, t_1)^{*} - \cA(z, t_2) j \cA(z, t_2)^{*} =
2\Im z \int_{t_1}^{t_2}\cA(z,\tau)\cP(\tau)\cA(z,\tau)^*\,d\nu(\tau)
\]
which implies that $\cA(z, t_1) j \cA(z, t_1)^{*} \ge \cA(z, t_2) j \cA(z, t_2)^{*}$ for $z \in \bbC_+$ with equality for $z \in \bbR$. Since $\tr(\cP j) = \tr(\cQ j) = 0$, the property $\det \cA(z,t) = 1$ for all $z, t$ follows from the formula for the derivative of the determinant  $\partial_\nu \det \cA(z,t) = \tr (\adj \cA(z,t) \partial_\nu \cA(z,t))$, where $\adj$ denotes the adjugate matrix.

The solution $\cA(z,t)$ of the canonical system will be called the transfer matrix, and the definitions of PdB-gauge and A-gauge for canonical systems are compatible with the definitions for $j$-monotonic families: 
\begin{lemma} The canonical system \eqref{canonicalsystemgeneral} is:
\begin{enumerate}[(a)]
\item in PdB-gauge if and only if the solution $\cA(z,t)$ is in PdB-gauge;
\item in A-gauge if and only if the solution $\cA(z,t)$ is in A-gauge.
\end{enumerate}
\end{lemma}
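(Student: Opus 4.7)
The plan is to evaluate the canonical system at the two distinguished points $z=0$ and $z=i$, where the right-hand side of \eqref{canonicalsystemgeneraldifferentialform} reduces to a single coefficient matrix, and to translate the algebraic conditions on $(\cP,\cQ)$ into matching properties of $\cA(0,t)$ or $\cA(i,t)$ by integrating or differentiating the resulting ODE.

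For part (a), setting $z=0$ in \eqref{canonicalsystemgeneraldifferentialform} gives $\partial_\nu \cA(0,t) = -\cA(0,t)\cQ(t) j$ $\nu$-a.e. If $\cQ = 0$ $\nu$-a.e., then $\cA(0,\cdot)$ is constant, and by the initial condition $\cA(z,0) = I$ we get $\cA(0,t) = I$ for every $t$. Conversely, $\cA(0,t) \equiv I$ forces $\partial_\nu \cA(0,t) = 0$, and invertibility of $j$ then forces $\cQ = 0$ $\nu$-a.e.

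For part (b), setting $z=i$ yields $\partial_\nu \cA(i,t) = -\cA(i,t) (\cP(t) + \cQ(t)) j$ $\nu$-a.e. In the forward direction, the A-gauge hypotheses combined with the universal constraints $\cP \ge 0$, $\cQ = -\cQ^*$, $\tr(j\cP) = \tr(j\cQ) = 0$ force $\cQ_{11} = \cQ_{22} = 0$, $\cP_{11} = \cP_{22}$, and $(\cP+\cQ)_{12} = 0$. Consequently $(\cP+\cQ) j$ is lower triangular with real diagonal entries $\mp \cP_{11}$, so the Volterra iteration starting from $\cA(i,0) = I$ preserves the lower-triangular form and produces diagonal entries $\exp(\pm \int_0^t \cP_{11}\, d\nu) > 0$. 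In the converse direction, if $\cA(i,t)$ is lower triangular with positive diagonal, then so is $\cA(i,t)^{-1}$, and $\partial_\nu \cA(i,t)$ inherits the lower-triangular form; hence $(\cP+\cQ)j = -\cA(i,t)^{-1} \partial_\nu \cA(i,t)$ is lower triangular $\nu$-a.e., and so is $\cP+\cQ$. To extract $\tr\cQ = 0$, I read off the $(1,1)$ entry of this identity to get $\cP_{11} + \cQ_{11} = \partial_\nu \log \cA_{11}(i,t)$; since $\cA_{11}(i,\cdot)$ is real and positive, the right-hand side is real $\nu$-a.e., forcing $\cQ_{11}$ to be real; but $\cQ = -\cQ^*$ forces $\cQ_{11} \in i\bbR$, so $\cQ_{11} = 0$ $\nu$-a.e. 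The remaining constraint $\tr(j\cQ) = \cQ_{22} - \cQ_{11} = 0$ then gives $\cQ_{22} = 0$, whence $\tr\cQ = 0$.

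The only genuinely delicate point is in the converse direction of (b): one relies on the fact that the Radon--Nikodym derivative of a positive real-valued locally AC function is real-valued $\nu$-a.e., in order to extract reality of $\cP_{11} + \cQ_{11}$ from that of $\partial_\nu \log \cA_{11}(i,\cdot)$. Everything else is linear algebra applied to a triangular ODE together with standard Volterra theory, so I expect no conceptual obstacle beyond this measure-theoretic bookkeeping.
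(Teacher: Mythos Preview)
Your proof is correct and follows essentially the same route as the paper's: evaluate the system at $z=0$ for (a) and at $z=i$ for (b), and translate between the algebraic form of $\cP+\cQ$ and the triangular structure of $\cA(i,t)$. The only cosmetic difference is that in the converse direction of (b) the paper uses $\det\cA(i,t)=1$ to write $\cA(i,t)=\left(\begin{smallmatrix} e^f & 0\\ * & e^{-f}\end{smallmatrix}\right)$ with real $f$ and then reads off that both diagonal entries of $\cP+\cQ$ equal $\partial_\nu f$, whereas you read the $(1,1)$ entry directly and invoke $\tr(j\cQ)=0$ to handle the $(2,2)$ entry; these are equivalent. The ``delicate point'' you flag is harmless: the Radon--Nikodym derivative of a real-valued $\nu$-a.c.\ function is real $\nu$-a.e.\ by a trivial imaginary-part argument.
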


\begin{proof}
Assertion $(a)$ follows immediately by taking $z=0$ in \eqref{canonicalsystemgeneral}. To prove assertion $(b)$, observe that for $z=i$ the initial problem \eqref{canonicalsystemgeneraldifferentialform} is of the form
\[
 (\partial_\nu \cA)(i,t) j = - \cA(i,t) (\cP(t) + \cQ(t)), \qquad \cA(i,0) = I.
\]
Therefore if $\cP+\cQ$ is lower triangular, then so is $\cA(i,\cdot)$. Relations $\tr \cQ = 0$, $\tr j\cQ = 0$ are equivalent to the fact that $\cQ$ has zeroes on the diagonal. In particular, the diagonal entries of $-(\cP + \cQ)j$ are real. It follows that $\cA(i,t)$ has nonegative terms on the diagonal.

Conversely, suppose that the family $\{\cA(i,t)\}$ satisfies \eqref{A}. Since $\det\cA(i,t) = 1$ for all $t \in \bbR$, we have
$$
\cA(i,t) 
= 
\begin{pmatrix}
e^f & 0\\ * &e^{-f}
\end{pmatrix} 
$$
for some real function $f$. Then
\begin{align*}
\cP(t) +\cQ(t) &= - \cA(i,t)^{-1} (\partial_\nu \cA)(i,t) j, \\
&= 
\begin{pmatrix}
e^{-f} & 0\\ * &e^{f}
\end{pmatrix} 
\begin{pmatrix}
(\partial_{\nu}f) e^f & 0\\ * &(\partial_{\nu}f) e^{-f}
\end{pmatrix},\\ 
&=
\begin{pmatrix}
\partial_{\nu}f & 0\\ * &\partial_{\nu}f 
\end{pmatrix},
\end{align*}
is lower triangular as well. Moreover, since $\cP \ge 0$, $\tr\cP j = \tr \cQ j = 0$, and $\cQ = - \cQ^*$, we have
$$
\cP = \begin{pmatrix}
p & *\\ * &p 
\end{pmatrix},
\qquad 
\cQ = \begin{pmatrix}
iq & *\\ * &iq 
\end{pmatrix},
$$
for some real $p$, $q$. Noting that $\partial_\nu f$ is real, we conclude that $q = 0$ $\nu$-almost everywhere on $\bbR$.
\end{proof}

We now turn to the other direction, to prove that continuous $j$-monotonic families which obey the Arov normalization \eqref{A} at $z=i$ are solutions of canonical systems in Arov gauge.

\begin{proof}[Proof of Theorem \ref{thmfamilytocansystem}]
By Lemma~\ref{lemmaAgaugematrixi} applied to $\fA(i,\ell)$, we can define functions $\mu(\ell)$, $\kappa(\ell)$ so that \eqref{Agaugematrixati} holds for each $\ell$. We have
\begin{align*}\label{2aug1}
\fA(i,\ell)^{-1} 
&= \begin{pmatrix} e^{-\mu(\ell)} & 0 \\ e^{\mu(\ell)} \kappa(\ell) & e^{\mu(\ell)} \end{pmatrix},\\ 
\fA(i,\ell_2)^{-1} \fA(i,\ell_1) 
&= \begin{pmatrix} e^{\mu(\ell_1)-\mu(\ell_2)} & 0 \\ e^{\mu(\ell_1)+\mu(\ell_2)}(\kappa(\ell_2)-\kappa(\ell_1)) & e^{\mu(\ell_2)-\mu(\ell_1)} \end{pmatrix}.
\end{align*}
Lemma~\ref{lemmaAgaugematrixi} applied to $\fA(i,\ell_2)^{-1} \fA(i,\ell_1) $ for $\ell_2 \ge \ell_1$ gives $\mu(\ell_2) \ge \mu(\ell_1)$ and
\[
\lvert e^{\mu(\ell_1)+\mu(\ell_2)} ( \kappa(\ell_1) - \kappa(\ell_2)) \rvert \le e^{\mu(\ell_2)-\mu(\ell_1)} - e^{\mu(\ell_1) - \mu(\ell_2)},
\]
which can be rewritten in the form
\begin{equation}\label{eq4}
\lvert \kappa(\ell_1) - \kappa(\ell_2)\rvert \le e^{-2\mu(\ell_1)} - e^{-2\mu(\ell_2)} = 2\int_{\ell_1}^{\ell_2}e^{-2\mu(l)}\,d\mu(l).
\end{equation}
Thus, $\kappa$ is absolutely continuous with respect to $e^{-2\mu}\,d\mu$, and it can be represented in the form \eqref{kappaformula}
for some Borel measurable function $\fa$ such that $\lvert \fa \rvert \le 1$ almost everywhere with respect to $\mu$. A calculation gives
\begin{align}
\partial_\mu \fA(i,\ell)
&= 
\begin{pmatrix}
e^{\mu(\ell)} & 0 \\
-e^{\mu(\ell)}\kappa(\ell) - 2\fa(\ell) e^{-\mu(\ell)} & -e^{-\mu(\ell)}
\end{pmatrix}, \nonumber \\
- \fA(i,\ell)^{-1} \partial_\mu \fA(i,\ell) j 
&= \begin{pmatrix}  1 & 0 \\ -2\fa(\ell) & 1 \end{pmatrix} = A(\ell) + B(\ell)
 \label{2nov1}
\end{align}
with $A ,B$ given by \eqref{ABfromfa}.

Let us now show that  for any  $z \in \bbC$, $\fA(z,\ell)$ is absolutely continuous with respect to $\mu$. Schwarz lemma for $j$-contractive matrix functions \cite{P60,EP73} says that for every $z_*, z\in \bbC_+$ and every $j$-contractive $2 \times 2$ matrix function $C$ on $\bbC_+$ the following block matrix is positive semi-definite:
\begin{equation}\label{eq1}
\cC=
\begin{pmatrix}
i \frac{j - C(z_*) j C(z_*)^*}{z_* - \bar z_*} & i\frac{j-C(z_*)j C(z)^*}{z_*-\bar z} \\
i \frac{j -C(z) jC(z_*)^*}{z-\bar z_*} & i \frac{j - C(z) j C(z)^*}{z - \bar z}
\end{pmatrix}
\ge 0.
\end{equation}
Applying this to $C(z) = \fA(z,\ell_1)^{-1}\fA(z,\ell_2)$ for any $\ell_1<\ell_2$ gives
\begin{equation}\label{eq2}
\cC = \cC_{diag}(\ell_1)(\cC_{\fA}(\ell_2) - \cC_{\fA}(\ell_1))\cC_{diag}(\ell_1)^*
\end{equation}
in \eqref{eq1}, where
$$
\cC_{diag}(\ell) 
= \begin{pmatrix}
\fA(z_*, \ell)^{-1} & 0 \\
0 & \fA(z, \ell)^{-1}
\end{pmatrix},
$$
and 
$$
\cC_{\fA}(\ell) = \begin{pmatrix}
i \frac{j - \fA(z_*,\ell) j \fA(z_*,\ell)^*}{z_* - \bar z_*} & i\frac{j - \fA(z_*,\ell) j \fA(z,\ell)^*}{z_*-\bar z} \\
i\frac{j - \fA(z,\ell) j \fA(z_*,\ell)^*}{z-\bar z_*} & i \frac{j - \fA(z,\ell) j \fA(z,\ell)^*}{z - \bar z}
\end{pmatrix}.
$$
It follows from \eqref{eq1}, \eqref{eq2} that the family $\cC_{\fA}(\ell)$ is monotonic. From here we see that there exists a scalar measure $\sigma$ and $2\times2$-matrix valued mappings $C_{jk}$ such that 
\begin{equation}\label{eq5}
\cC_{\fA}(\ell) 
= \int_{[0,\ell)}\begin{pmatrix}
C_{11}& C_{12}\\
C_{21}& C_{22}\end{pmatrix}d\sigma.
\end{equation}
From now on, let $z_* = i$. Then we have 
\begin{equation}\label{eq3}
\int_{[0,\ell)} C_{11}d\sigma =
\frac{j - \fA(i,\ell) j \fA(i,\ell)^*}{2}
= \int_{[0,\ell)}\fA({i},l) A(l) \fA({i},l)^*\,d\mu(l).
\end{equation}
Let us show that $C_{12}\,d\sigma$ is absolutely continuous with respect to $\mu$. If $\mu(e) = 0$, then \eqref{eq5}, \eqref{eq3} and the monotonicity of $\cC_{\fA}$ show that
\begin{equation*}
\begin{pmatrix}
0& *\\
*& * 
\end{pmatrix}
=
\int_{e}\begin{pmatrix}
C_{11}& C_{12}\\
C_{21}& C_{22}\end{pmatrix}d\sigma \ge 0.
\end{equation*}
From here we see that $\int_{e} C_{12}\,d\sigma = \int_{e}C_{21}\,d\sigma = 0$ by considering quadratic form of the block matrix above on vectors of the form $(xe_1, e_2)$, $x \in \bbR$, $e_{1,2} \in \bbC^2$ and using the fact that any sign-definite linear function is identically constant. Thus, the mapping 
\[
\ell \mapsto \int_{[0,\ell)} C_{12}\,d\sigma = i\frac{j - \fA(i,\ell) j \fA(z,\ell)^*}{i-\bar z}
\]
is absolutely continuous with respect to $\mu$. Since we already know that $\fA(i,\cdot)$ is absolutely continuous with respect to $\mu$ and $\fA(i,\cdot) \in \SL(2, \bbC)$, from here we see that the mapping $\ell \mapsto \fA(z,\ell)$ is absolutely continuous with respect to $\mu$. It follows that we could have taken $\sigma= \mu$ from the start. In particular, by denoting
\begin{equation}\label{7nov1}
\cM(z,\ell) := -\fA(z,\ell)^{-1}\pd_\mu\fA(z,\ell)j
\end{equation}
we compute
\[
\cC_{diag}(\ell) \partial_\mu \cC_{\fA}(\ell) \cC_{diag}(\ell)^* =  \begin{pmatrix}
\frac{\cM(i,\ell) + \cM(i,\ell)^*}2 & i \frac{\cM(i,\ell) + \cM(z,\ell)^*}{i-\bar z} \\
i \frac{\cM(z,\ell) + \cM(i,\ell)^*}{z+i} & i \frac{\cM(z,\ell) + \cM(z,\ell)^*}{z-\bar z}
\end{pmatrix}  \ge 0.
\]
Positivity of the bottom right block implies that
\begin{equation}\label{7nov3}
i \frac{\cM(z,\ell) + \cM(z,\ell)^*}{z-\bar z} \ge 0
\end{equation}
and matrix positivity implies
\[
\left\lVert  i \frac{\cM(i,\ell) + \cM(z,\ell)^*}{i-\bar z}  \right\rVert^2 \le \left\lVert \frac{\cM(i,\ell) + \cM(i,\ell)^*}2  \right\rVert \left\lVert  i \frac{\cM(z,\ell) + \cM(z,\ell)^*}{z-\bar z} \right\rVert.
\]
Since $\lVert \cM(i,\ell) \rVert \le 2$ for all $\ell$, this implies an upper bound on $\lVert \cM(z,\ell) \rVert$, uniformly for $z$ in compact subsets of $\bbC_+$ and uniformly in $\ell$. 

So far, $\cM(z,\ell)$ was viewed for each $z$ as a Borel function of $\ell$, uniquely defined up to a zero measure set. Now let us consider it as a function of $z$. Fix a simple closed contour $\gamma$ in $\bbC_+$. For every $w$ in the region enclosed by $\gamma$ and every $\ell > 0$, we claim that
\begin{equation}\label{6nov1}
\fA(z,\ell) - \fA(z,0) = \int_0^\ell  \oint_\gamma \frac{\partial_\mu \fA(w,l)}{w-z} \frac{dw}{2\pi i} d\mu(l).
\end{equation}
Namely, due to uniform boundedness of the integrand, this follows by applying Fubini's theorem to exchange the integrals and using analyticity of $\fA$. Moreover, Fubini's theorem guarantees that for $\mu$-a.e. $\ell$, the contour integral is well-defined; it is by construction an analytic function of $z$ in the region enclosed by $C$. By \eqref{6nov1}, the contour integral can be taken as the new definition of $\partial_\mu \fA(z,\ell)$ and away from a zero measure set of $\ell$,  $\partial_\mu \fA(z,\ell)$ is analytic in $z$ in the region enclosed by $C$. By a countable exhaustion of $\bbC_+$, the functions $\partial_\mu \fA(z,\ell)$ are analytic in $\bbC_+$ away from a zero measure set of $\ell$. Thus, $\cM(z,\ell)$ are analytic in $\bbC_+$ away from a zero measure set of $\ell$.

By \eqref{7nov3}, $i\cM(z,\ell)$ is a matrix Herglotz function. Let us use
\[
j - \fA(z,\ell) j \fA(z,\ell)^* = \int_0^\ell \fA(z,l) ( \cM(z,l) + \cM(z,l)^*) \fA(z,l)^* d\mu(l).
\]
Evaluating at $z = \lambda + i \epsilon$ and letting $\epsilon \downarrow 0$ gives $0$ locally uniformly in $\lambda$, so by Stieltjes' inversion, the Herglotz representation of $\cM(z,\ell)$ has trivial measure on $\bbR$, and consequently it is of the form 
$i \cM(z,\ell) = A(\ell) z + i B(\ell)$ for some constant matrices $A \ge 0$ and $B= - B^*$. Evaluating $\cM(i,\ell) = A(\ell) + B(\ell)$ and $\cM(i)^* = A(\ell) - B(\ell)$ and using \eqref{2nov1}, we obtain \eqref{ABfromfa} and \eqref{canonicalsystemArov} holds for $z \in \bbC_+$. Finally, \eqref{canonicalsystemArov} holds for all $z\in \bbC$ by continuity and symmetry or by analyticity.
\end{proof}

\section{Weyl theory for $j$-monotonic families}

In this section we consider the nested family of Weyl disks \eqref{WeylDiskeqn}. We begin by characterizing the limit point case for $j$-monotonic families in the Arov gauge.

\begin{proof}[Proof of Prop.~\ref{prop18}]
A direct calculation using \eqref{Agaugematrixati} describes the Weyl disk $D(i,\ell)$ as the set of $w$ such that
$$\lvert e^{\mu(\ell)} w - e^{\mu(\ell)} \kappa(\ell) \rvert^2 \le e^{-2\mu(\ell)}.$$ 
Its Euclidean radius is $e^{-2\mu(\ell)}$ and its Euclidean center is $\kappa(\ell)$. Therefore, the disks shrink to a point if and only if $\mu(\ell) \to \infty$, and in this case,
\begin{equation}\label{eq17}
s_+(i) = \lim_{\ell\to\infty} \kappa(\ell) = \int_0^\infty 2 \fa e^{-2\mu} \,d\mu.
\end{equation}
Now consider the case where $\mu$ is a finite measure on $\bbR_+$ and assume (by possibly reparametrizing) that it is defined on a finite interval $[0,L]$. The initial value problem \eqref{canonicalsystemArov}-\eqref{ABfromfa} then has a solution $\{\fA(z,\ell)\}_{\ell \in [0,L]}$. In this case,
\[
\bigcap_{\ell \ge 0} D(z,\ell) = D(z,L) =  \{ w  \mid \begin{pmatrix} w & 1 \end{pmatrix} \fA(z,L)  j \fA(z,L)^* \begin{pmatrix} w & 1 \end{pmatrix}^* \ge 0 \}
\]
is a nontrivial disk for all $z \in \bbC_+$. Thus, if $\cap_{\ell \ge 0} D(i,\ell)$ is a disk, then $\mu$ is a finite measure and  $\cap_{\ell \ge 0} D(z,\ell)$ is a nontrivial disk for all $z \in \bbC_+$. Then this holds for all canonical systems (not only in Arov gauge), so this implication can be reversed by applying it to the transfer matrices $\tilde \cA(z,\ell) = \fA((z-x)/y, \ell)$. 
\end{proof}

Note that for every $t$ and every point $w \in \bbD$ we have $(w , 1)\cA(z,t)^{-1} \in D(z,t)$. It follows that in the limit point case we have
\begin{equation}\label{eq09}
(s_+(z), 1) \simeq \lim_{t\to+\infty}(w , 1)\cA(z,t)^{-1}, \qquad z \in \bbC_+, \quad |w|<1,
\end{equation}
where $\simeq$ stands for the projective relation in $\bbC^2 \setminus \{0\}$:
\begin{equation}\label{eq01}
\zeta_1 \simeq \zeta_2 \;\mbox{ if and only if }\; \zeta_1 = \lambda\zeta_2\; \mbox{ for some }\;\lambda \in \bbC, \qquad \zeta_{1,2} \in \bbC^2 \setminus \{0\}.
\end{equation}
This shows that in the limit point case the mapping $z \mapsto s_+(z)$ defines an analytic function $s_+$ of Schur class in $\bbC_+$ (in other words, we have $s_+(\bbC_+) \subset \overline{\bbD}$). This function is called the Schur function of the $j$-monotonic family $\{\cA(z,t)\}_{t \in \bbR}$.

Let us briefly discuss how just defined Schur functions $s_+$ are related to the Weyl functions $m_+$, the classical object of spectral theory in Potapov-de Branges gauge. Any $2\times 2$ matrix $\cJ$ with $\cJ = \cJ^* = \cJ^{-1}$ and $\cJ \neq \pm I$ is unitarily equivalent to $j$. For the choice \eqref{cJmatrixdefn}, the unitary equivalence is
 $\cJ = W j W^{-1}$ where $W=\sqrt{1/2} ( \begin{smallmatrix}  i & i \\ 1 & -1 \end{smallmatrix})$ corresponds to the Cayley transform. This makes it trivial to switch between $j$ and $\cJ$. For instance, a matrix $T$ is $j$-expanding if and only if $W T W^{-1}$ is $\cJ$-expanding. The inequality $\begin{pmatrix} w & 1 \end{pmatrix} \cJ\begin{pmatrix} w & 1 \end{pmatrix}^* > 0$ describes the upper half-plane $\bbC_+$. In fact, if we define
\begin{equation}\label{eq10}
T(z,t) = (W \cA(z,t) W^{-1})^\top
\end{equation}
for a $j$-monotonic family $\{\cA(z,t)\}_{t \in \bbR}$, then 
\begin{equation*}
T(z,t_2)^* \cJ T(z,t_2) \ge T(z,t_1)^* \cJ T(z,t_1), \qquad t_2 \ge t_1.
\end{equation*}
The property above is satisfied by transfer matrices of the canonical systems of the form
$$
\cJ T(z,t)' = -izH(t)T(z,t).
$$ 
The standard definition of Weyl disks can be expressed as
\[
D^+_T(z,t) = \{ u \mid \begin{pmatrix} u & 1 \end{pmatrix} T(z,t)^\top \cJ (T(z,t)^\top)^* \begin{pmatrix} u & 1 \end{pmatrix}^* \ge 0 \},
\]
so for $T$, $\cA$ related by \eqref{eq10} we have $u \in D^+_T(z,t)$ if and only if $w \in D_{\cA}(z,t)$ where $\begin{pmatrix} w & 1 \end{pmatrix} \simeq \begin{pmatrix} u & 1 \end{pmatrix} W$, i.e., $u$, $w$ are related by the Cayley transform: 
$$
u = i \frac{1+w}{1-w}.
$$ 
In particular, if we were working with $\cJ$, the Weyl disks would be subsets of $\overline{\bbC_+}$, and instead of the Schur function we would obtain the  function 
\[
\{m_+(z)\} = \bigcap_{t \ge 0} D_{T}(z,t),  \qquad 
m_+ : \bbC_+ \to \bbC_+,
\]
of Herglotz class in $\bbC_+$ (excluding degenerate cases when it is a constant in $\bbR \cup \{\infty\}$). This function is called the Titchmarsh--Weyl function of $\{T(z,t)\}_{t \in \bbR}$. It is related to the Schur spectral function of $\{\cA(z,t)\}_{t \in \bbR}$ by the Cayley transform:
\[
m_+(z) = i \frac{1+s_+(z)}{1-s_+(z)}, \qquad z \in \bbC_+.
\]

We must acknowledge a degenerate case: in the terminology of de Branges, the following corollary corresponds to the case when $[0,\infty)$ is a single singular interval.

\begin{corollary}\label{c01}
The function $s_+$ is a unimodular constant function if and only if $\fa$ is a unimodular constant almost everywhere on $\bbR_+$.
\end{corollary}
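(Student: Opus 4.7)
The plan is to use the explicit formula \eqref{eq17} for $s_+(i)$ together with a triangle inequality argument, and then invoke the maximum modulus principle for Schur functions to upgrade an equality at the single point $z=i$ to a constant-function identity on all of $\bbC_+$.

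First I would compute the ``total mass'' $\int_0^\infty 2e^{-2\mu(l)}\,d\mu(l)$. Since we are in the limit point case, $\mu$ is a continuous measure on $\bbR_+$ with $\mu(0)=0$ and $\mu(\ell)\to\infty$, so the substitution $u=e^{-2\mu(l)}$ gives
\[
\int_0^\infty 2e^{-2\mu(l)}\,d\mu(l) = -\int_1^0 du = 1.
\]
Combined with \eqref{eq17} and the bound $|\fa|\le 1$ $\mu$-a.e., this yields
\[
|s_+(i)| \;=\; \left| \int_0^\infty 2\fa\, e^{-2\mu}\,d\mu \right| \;\le\; \int_0^\infty 2|\fa|\,e^{-2\mu}\,d\mu \;\le\; \int_0^\infty 2 e^{-2\mu}\,d\mu \;=\; 1,
\]
with equality in the first estimate iff $\fa e^{-2\mu}$ has constant argument $\mu$-a.e., and equality in the second iff $|\fa|=1$ $\mu$-a.e.

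For the ``if'' direction, assume $\fa \equiv c$ $\mu$-a.e.\ with $|c|=1$. Then \eqref{eq17} gives $s_+(i)=c$, so $|s_+(i)|=1$, and the maximum modulus principle applied to the Schur function $s_+:\bbC_+\to\overline{\bbD}$ forces $s_+\equiv c$.

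For the ``only if'' direction, assume $s_+$ is a unimodular constant $c$; then $|s_+(i)|=1$, so both inequalities above are equalities. Since $e^{-2\mu}$ is strictly positive and real, equality in the triangle inequality together with $|\fa|=1$ $\mu$-a.e.\ forces $\fa$ to be constant (not merely of modulus one) $\mu$-a.e.; evaluating \eqref{eq17} then pins the constant to $c$. The only mildly subtle point is the simultaneous use of both equality cases to conclude $\fa$ is a constant rather than just unimodular, but this is immediate once one notes that the weight $e^{-2\mu}$ is real and positive, so it does not interfere with the argument.
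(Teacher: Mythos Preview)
Your proof is correct and follows essentially the same approach as the paper's own proof: both use the integral formula \eqref{eq17} together with the equality cases in the triangle inequality to characterize when $|s_+(i)|=1$, and then invoke the maximum modulus principle (the paper calls it ``the classical Schwarz lemma'') to pass from a single point to the whole of $\bbC_+$. The paper's version is simply terser, asserting the equivalence $|s_+(i)|=1 \iff \fa=c$ a.e.\ with $|c|=1$ without spelling out the total-mass computation or the equality-case analysis that you have written out.
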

\begin{proof}
From \eqref{eq17} we see that $|s_+(i)| = 1$ if and only if $\fa = c$ almost everywhere on $\bbR$ for some $c \in \bbC$ such that $|c|=1$. It remains to use the classical Schwarz lemma.
\end{proof}

It is now easy to obtain a gauge-independent result about locally uniform shrinking of the Weyl disks:

\begin{proposition}
For any $j$-monotonic family in the limit point case  such that $s_+$ is not a unimodular constant and for any compact $K \subset \bbC_+$, we have
\begin{equation}\label{16aug1}
\lim_{t\to \infty} \sup_{z \in K} \diam \, D(z,t) = 0,
\end{equation}
where $\diam$ denotes the diameter in Euclidean distance.
\end{proposition}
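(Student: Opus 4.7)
My plan is to apply Dini's theorem after expressing $\diam D(z,t)$ explicitly in terms of the transfer matrix entries.

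First, I would observe that the $j$-monotonic property together with $\cA(z,0) = I$ forces $\cA(z,t)$ to be $j$-inner for every $t > 0$, so for $z \in \bbC_+$ the matrix $\cA(z,t) j \cA(z,t)^*$ has $(1,1)$-entry
\[
\alpha(z,t) = |\cA_{12}(z,t)|^2 - |\cA_{11}(z,t)|^2
\]
satisfying $\alpha(z,t) \le -1$. Completing the square in the defining quadratic inequality of $D(z,t)$ in \eqref{WeylDiskeqn} identifies $D(z,t)$ as a genuine Euclidean disk, whose diameter is
\[
\diam D(z,t) = \frac{2}{|\alpha(z,t)|} = \frac{2}{|\cA_{11}(z,t)|^2 - |\cA_{12}(z,t)|^2}.
\]
Since the denominator is at least $1$ on $\bbC_+$, the function $z \mapsto \diam D(z,t)$ is continuous on $\bbC_+$ for each fixed $t$.

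Second, the nesting property $D(z,t_2) \subset D(z,t_1)$ for $t_1 \le t_2$, which is a direct consequence of $j$-monotonicity as noted in the introduction, shows that $t \mapsto \diam D(z,t)$ is monotonically nonincreasing. In the limit point case, Proposition~\ref{prop18} (applied to the A-gauge representative of the family, available by Proposition~\ref{l07}; the Weyl disks themselves are invariant under the gauge transformation \eqref{gaugetransformation}) gives $\diam D(z,t) \downarrow 0$ as $t \to \infty$ for each $z \in \bbC_+$.

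Finally, I would invoke Dini's theorem: on the compact set $K$, the continuous functions $z \mapsto \diam D(z,t)$ decrease monotonically to the continuous limit $0$, so the convergence is automatically uniform, which is exactly \eqref{16aug1}.

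I do not expect any serious obstacle here; the two elementary facts about $\diam D(z,t)$ (continuity in $z$ and monotonicity in $t$) both follow directly from the $j$-inner/$j$-monotonic structure, and Dini's theorem then delivers uniformity for free on a compact set. It is worth remarking that the hypothesis excluding $s_+$ from being a unimodular constant does not appear to enter this approach, which would also cover the degenerate case in which $D(z,t)$ shrinks to a boundary point of $\overline{\bbD}$.
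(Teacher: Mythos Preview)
Your argument is correct and follows the same endgame as the paper: continuity in $z$, monotonicity in $t$, pointwise convergence to $0$, then Dini. The difference lies in how you reach continuity. You compute the explicit formula $\diam D(z,t)=2/(|\cA_{11}(z,t)|^2-|\cA_{12}(z,t)|^2)$ directly from the quadratic inequality defining the Weyl disk (this is the content of the paper's Corollary~\ref{l03}, which in the paper comes \emph{after} the present proposition), and continuity is then immediate because the entries of $\cA$ are entire and the denominator is $\ge 1$. The paper instead invokes a matrix Schwarz lemma argument to show that once $s_+$ is not a unimodular constant, there is some $\ell_1$ with $D(z,\ell_1)\subset\bbD$ for all $z\in\bbC_+$, and deduces continuity of the diameter from that. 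Your route is shorter, and your closing remark is justified: with the explicit diameter formula in hand, the non-unimodularity hypothesis is not needed for the uniform shrinking statement itself.
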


\begin{proof}
For an arbitrary $j$-contractive matrix function $\cA(z)$
$$
 \begin{pmatrix}w&1
\end{pmatrix}(j-\cA(z_0)j\cA(z_0)^*)\begin{pmatrix}\bar w\\1
\end{pmatrix}\ge 0.
$$
On the other hand, if $w\in\bbT$ and in the Weyl disk at $z_0$, then
$$
 \begin{pmatrix}w&1
\end{pmatrix}(j-\cA(z_0)j\cA(z_0)^*)
\begin{pmatrix}\bar w\\1
\end{pmatrix}= -\begin{pmatrix}w&1
\end{pmatrix}\cA(z_0)j\cA(z_0)^*\begin{pmatrix}\bar w\\1
\end{pmatrix}\le 0.
$$
Thus 
$$
 \begin{pmatrix}w&1
\end{pmatrix}(j-\cA(z_0)j\cA(z_0)^*)\begin{pmatrix}\bar w\\1
\end{pmatrix}= 0.
$$
and by the Schwarz Lemma $
 \begin{pmatrix}w&1
\end{pmatrix}\cA(z_0)=\begin{pmatrix} w &1
\end{pmatrix}\cA(z)
$ for all $z\in\bbC_+$.

If $s_+$ is not a unimodular constant, there exists $\ell_1 > 0$ such that $D(i,\ell_1) \subset \bbD$; the above discussion, applied to $\cA(z,\ell_1)$, shows by contraposition that $D(z,\ell_1) \subset \bbD$ for all $z\in \bbC_+$. Thus, $z \mapsto \diam D(z,\ell)$ for $\ell \ge \ell_1$ are continuous functions on $\bbC_+$. They converge to zero monotonically on $\bbC_+$, so \eqref{16aug1} follows from Dini's theorem.
\end{proof}

\begin{corollary}\label{l03}
	Let $T=\left(\begin{smallmatrix}T_{11} & T_{12}\\ T_{21} & T_{22}\end{smallmatrix}\right)$ be a $j$-contractive matrix, and let $D$ be its Weyl disk:
	$$
	D = \{w: \; (w , 1)TjT^*(w , 1)^* \ge 0\}.
	$$
	Then $\diam\,D = 2/(|T_{11}|^2-|T_{12}|^2)$.
\end{corollary}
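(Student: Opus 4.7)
The plan is a direct computation: unwind the definition of $D$ into a quadratic inequality in $w$, identify the disk as a completed square, and simplify the radius via a determinant identity.

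First, I would note that $(w,1)T = (wT_{11} + T_{21},\; wT_{12} + T_{22})$, so the defining inequality for $D$ reads
\[
|wT_{12} + T_{22}|^2 - |wT_{11} + T_{21}|^2 \ge 0,
\]
which after expansion becomes
\[
\Delta\,|w|^2 - 2\Re\bigl((T_{11}\bar T_{21} - T_{12}\bar T_{22})\,w\bigr) + (|T_{21}|^2 - |T_{22}|^2) \le 0,
\]
where $\Delta := |T_{11}|^2 - |T_{12}|^2$. Testing $j$-contractivity ($TjT^* \le j$) on the vector $(1,0)$ gives $-|T_{11}|^2 + |T_{12}|^2 \le -1$, hence $\Delta \ge 1$; so the leading coefficient is strictly positive and we are truly looking at a genuine closed disk.

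Dividing by $\Delta$ and completing the square puts this in the form $|w - w_0|^2 \le r^2$ with
\[
w_0 = \frac{\overline{T_{11}\bar T_{21} - T_{12}\bar T_{22}}}{\Delta}, \qquad \Delta^2 r^2 = |T_{11}\bar T_{21} - T_{12}\bar T_{22}|^2 - \Delta\,(|T_{21}|^2 - |T_{22}|^2).
\]

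The crux of the argument is then the purely algebraic identity
\[
|a\bar c - b\bar d|^2 - (|a|^2 - |b|^2)(|c|^2 - |d|^2) = |ad - bc|^2, \qquad a,b,c,d \in \bbC,
\]
verified by direct expansion: the cross terms on the left produce $-a\bar b\bar c d - \bar a b c \bar d$ and the square terms contribute $|a|^2|d|^2 + |b|^2|c|^2$, which assemble precisely into $|ad-bc|^2$. Applied with $(a,b,c,d) = (T_{11},T_{12},T_{21},T_{22})$ it yields $\Delta^2 r^2 = |\det T|^2$. Since all transfer matrices in this paper lie in $\SL(2,\bbC)$, we have $\det T = 1$, and therefore $r = 1/\Delta$ and $\diam D = 2r = 2/\Delta$, as claimed. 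The only real step to anticipate is spotting this determinant identity; once it is recognized, the rest of the proof is mechanical.
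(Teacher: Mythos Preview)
Your argument is correct and complete (under the standing assumption $\det T=1$, which the paper's own proof also uses implicitly via Proposition~\ref{l07} and Proposition~\ref{prop18}). There is a harmless sign slip in your linear term---the expansion actually gives $+2\Re\bigl((T_{11}\bar T_{21}-T_{12}\bar T_{22})w\bigr)$---but this affects only the center $w_0$, not the radius, so the conclusion stands.

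The route, however, is genuinely different from the paper's. The paper does \emph{not} compute directly: instead it observes that both the Weyl disk $D$ and the quantity $|T_{11}|^2-|T_{12}|^2=-(1,0)TjT^*(1,0)^*$ are invariant under $T\mapsto TU$ for $U\in\SU(1,1)$, then invokes Proposition~\ref{l07} to reduce to the lower-triangular case $T_{12}=0$, where the diameter $2/|T_{11}|^2$ has already been read off in the proof of Proposition~\ref{prop18}. So the paper leverages the gauge-invariance machinery it has just built, while you give a self-contained calculation hinging on the determinant identity $|a\bar c-b\bar d|^2-(|a|^2-|b|^2)(|c|^2-|d|^2)=|ad-bc|^2$. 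Your approach is more elementary and, as a bonus, makes the role of $\det T$ explicit: it shows that for general $j$-contractive $T$ one gets $\diam D = 2|\det T|/(|T_{11}|^2-|T_{12}|^2)$. The paper's approach, on the other hand, illustrates how the Arov normalization can be used as a computational device even for statements that are themselves gauge-free.
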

\begin{proof}
	We have 
	$$
	|T_{11}|^2-|T_{12}|^2 = -(1, 0) T j T^*(1, 0)^*.
	$$
	In particular, this quantity is invariant under the transformation $T \mapsto TU$ for every $j$-unitary matrix $U$. Using Proposition \ref{l07}, choose $U$ so that $TU$ is lower-triangular. Then $T_{12} = 0$ and the proof of Proposition \ref{prop18} shows that the diameter is indeed $2/|T_{11}|^2$ (that is equal, in notation of Proposition \ref{prop18}, to $2e^{-2\mu}$). Since the Weyl disk is invariant under the transformation $T \mapsto TU$, $U \in \SU(1,1)$, the lemma follows. 
\end{proof}

We next translate the famous de Branges' uniqueness theorem into A-gauge:

\begin{proof}[Proof of  Theorem~\ref{theoremdeBrangesuniqueness}]
If the family $\fA(z,\ell)$ in A-gauge has the Schur function $s_+$, passing to PdB-gauge we obtain a family in PdB-gauge $\cA(z,\ell) = \fA(z,\ell) \fA(0,\ell)^{-1}$ with the same Schur function. By de Branges' uniqueness theorem \cite{dB}, that describes the family $\cA$ uniquely up to reparametrization; thus, the family $\fA$ is also determined uniquely up to reparametrization by Prop.~\ref{l07}. Thus, the parameters in A-gauge are determined uniquely up to reparametrization.
\end{proof}

We now consider the reflection procedure \eqref{canonicalsystemreflectiongeneral}. 
Using the relation $j_1 j j_1 = - j$, we see that the family $\tilde \cA(z,t) = j_1 \cA(z,-t) j_1$ is indeed $j$-monotonic. We say that the initial family $\{\cA(z,t)\}_{t\le 0}$ is in the limit point case at $-\infty$ if the reflected family $\{ \tilde \cA(z,t) \}_{t \ge 0}$ is in the limit point case at $+\infty$. In the latter case, we denote this Schur function by $s_-$ and characterize it by applying \eqref{eq09} to $\tilde \cA(z,t)$, which gives:
\begin{equation}\label{sminusaslimit}
\begin{pmatrix}s_-(z), 1\end{pmatrix} \simeq \lim_{t \to +\infty} \begin{pmatrix} w & 1 \end{pmatrix}j_1\cA(z, -t)^{-1}j_1, \qquad z \in \bbC_+.
\end{equation}
In this way we associate two Schur functions $s_\pm$ to each two-sided $j$-monotonic family $\{\cA(z,t)\}_{t \in \bbR}$.
This is consistent with the literature on canonical systems in Potapov-de Branges' gauge, in which reflection 
corresponds to the change 
$\tilde T(z,x) = j T(z,-t) j$, and the $\cJ$-monotonic transfer matrices $T(z,x)$ are related to the $j$-monotonic transfer matrices $\cA(z,t)$ as in  \eqref{eq10}. For instance, the definition of the negative half-line Weyl function $m_-$  corresponds to our negative half-line Schur function by $m_- = i \frac{1+s_-}{1-s_-}$.

As noted in the introduction, this reflection procedure does not preserve A-gauge. Prop.~\ref{lemmanegativehalflineAgauge} describes a modified reflection procedure which does: 

\begin{proof}[Proof of Prop.~\ref{lemmanegativehalflineAgauge}]
A direct calculation shows
	\[
	\partial_{\overleftarrow{\mu}} (j_1 \fA(z,-\ell) j_1) = - j_1 \fA(z,-\ell) j_1 j_1 (-iz A(-\ell) + B(-\ell)) j_1
	\]
	and
	\[
	j_1 (-izA(-\ell) + B(-\ell)) j_1 = \begin{pmatrix}-iz&-\fa(-\ell)(-iz+1)\\
	-\overline{\fa(-\ell)}(-iz-1)&-iz
	\end{pmatrix}.
	\]
	Note that this is upper-triangular at $z=i$ instead of lower-triangular, and
	\begin{multline*}
	\begin{pmatrix} v(z)&0\\0&1
	\end{pmatrix}
	\begin{pmatrix}-iz&-\fa(-\ell)(-iz+1)\\
	-\overline{\fa(-\ell)}(-iz-1)&-iz
	\end{pmatrix}
	\begin{pmatrix} v(z)&0\\0&1
	\end{pmatrix}^{-1}=
	\\
	=\begin{pmatrix}-iz&-\fa(-\ell)(-iz-1)\\
	-\overline{\fa(-\ell)}(-iz+1)&-iz
	\end{pmatrix}
	\end{multline*}
	so $\overleftarrow \fA(z,\ell)$ defined by \eqref{3jun1} is a canonical system in the Arov gauge with the coefficients $(\overleftarrow \fa, \overleftarrow{\mu})$. Comparing the Weyl disks of $\tilde \fA(z,\ell)$ and $j_1 \fA(z,-\ell)j_1$  gives $s_-(z) = v(z) \overleftarrow{s_+}(z)$.
\end{proof}

\section{Ricatti equation and spectral asymptotics}

We now study the family of Schur functions generated by the coefficient stripping
\[
\fA^{[\ell]}(z,l) = \fA(z,\ell)^{-1} \fA(z,l + \ell).
\]
Just as the Schur functions $s_\pm$ can be characterized by \eqref{eq09}, \eqref{sminusaslimit}, the family of Schur functions $s_\pm(z,\ell)$ obey

\begin{align}\label{eq06}
&\begin{pmatrix}s_+(z,\ell), 1\end{pmatrix} \simeq \lim_{l \to +\infty} \begin{pmatrix} w & 1 \end{pmatrix}\fA^{[\ell]}(z, l)^{-1}
\simeq \begin{pmatrix}s_+(z), 1\end{pmatrix} \fA(z,\ell).\\
&\begin{pmatrix}s_-(z,\ell), 1\end{pmatrix} \simeq \lim_{l \to +\infty} \begin{pmatrix} w & 1 \end{pmatrix}(j_1\fA^{[\ell]}(z, -l)j_1)^{-1}
\simeq \begin{pmatrix}s_-(z), 1\end{pmatrix} j_1\fA(z, \ell)j_1.\label{eq05}
\end{align}
Moreover, for every compact $K \subset \bbC_+$, the convergence is uniform with respect to $z \in K$ and $w \in \overline{\bbD}$. Next result shows that functions $s_+(z,\cdot)$ satisfy a Ricatti equation for every $z \in \bbC_+$.

\begin{proof}[Proof of Prop.~\ref{propRicatti}]
Formula \eqref{eq06} implies the existence of a non-vanishing function $\phi(z,\cdot)$ such that
\begin{equation}\label{10jun1}
\begin{pmatrix} s_+(z) & 1 \end{pmatrix} \fA(z,\ell) = \phi(z,\ell) \begin{pmatrix} s_+(z,\ell) & 1 \end{pmatrix}, \qquad z \in \bbC, \quad \ell \in \bbR.
\end{equation}
Since $\fA(z, \cdot)$ is absolutely continuous with respect to $\mu$, the same is true for
$$
\phi(z,\ell) = \begin{pmatrix} s_+(z) & 1 \end{pmatrix} \fA(z,\ell) \begin{pmatrix} 0 & 1 \end{pmatrix}^*.
$$
Multiplying \eqref{10jun1} by $j$ from the right and using $\partial_\mu \fA j = \fA (izA-B)$, we obtain
\[
\begin{pmatrix} s_+(z) & 1 \end{pmatrix} \fA(z,\ell) (izA-B) = \partial_\mu \phi(z,\ell) \begin{pmatrix} s_+(z,\ell) & 1 \end{pmatrix}j +  \phi(z,\ell) \begin{pmatrix} \partial_\mu s_+(z,\ell) & 0 \end{pmatrix}j,
\]
or, equivalently,
\begin{multline}\label{eq08}
\phi(z,\ell) \begin{pmatrix} s_+(z,t) & 1 \end{pmatrix} (izA-B) =\\ = \partial_\mu \phi(z,\ell) \begin{pmatrix} s_+(z,\ell) & 1 \end{pmatrix}j +  \phi(z,\ell) \begin{pmatrix} \partial_\mu s_+(z,\ell) & 0 \end{pmatrix}j.
\end{multline}
Now right-multiplying the equation by $ \binom 1{s_+(z,\ell)}$ concludes the proof.
\end{proof}

An arbitrary Schur function has nontangential limits Lebesgue-a.e. on $\bbR\cup\{\infty\}$, but is not guaranteed to have a nontangential (or normal) limit at $\infty$. Since a canonical system can have an arbitrary Schur function, there can be no general statements about the behavior of the Schur function at $\infty$. In this sense the situation is very different compared to the Schr\"odinger or Dirac cases, in which leading order asymptotic behavior is obtained by comparing to a certain ``free" operator. However, we prove a Lebesgue-type condition suffices:

\begin{theorem} \label{thmSchurlimit} 
For any canonical system in  A-gauge such that
\begin{equation}\label{eq07}
\lim_{\ell \downarrow 0} \frac 1{ \mu([0,\ell)) } \int_0^\ell  \lvert \fa(l) - \fa(0) \rvert  d\mu(l) = 0,
\end{equation}
for some $\fa(0) \in \bbC$, then the Schur function has a nontangential limit at $+i\infty$,
\begin{equation}\label{8julg}
\lim_{\substack{ z\to \infty \\ \arg z \in [\delta,\pi-\delta]}} s_+(z) = \frac{ \fa(0) }{ 1+ \sqrt{ 1-  \lvert \fa(0) \rvert^2}},
\end{equation}
for any $\delta > 0$.
\end{theorem}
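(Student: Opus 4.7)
The plan is to linearize the Ricatti equation of Proposition~\ref{propRicatti} via a Möbius change of coordinates adapted to the \emph{frozen} system in which $\fa$ is replaced by the constant $\fa_0 := \fa(0)$. Set $\fc_0 := \fa_0/(1+\sqrt{1-|\fa_0|^2})$; I will focus on the main case $|\fa_0|<1$ (the boundary case $|\fa_0|=1$ requires a separate argument via Corollary~\ref{c01}).

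First, I would rewrite the Ricatti equation explicitly as
\[
\partial_\mu s_+(z,\ell) = (iz+1)\bar\fa(\ell)\,s_+^2 - 2izs_+ + (iz-1)\fa(\ell),
\]
and observe that the frozen quadratic $(iz+1)\bar\fa_0 w^2 - 2izw + (iz-1)\fa_0$ has two roots $s_{1,2}(z) = [iz \mp R(z)]/[(iz+1)\bar\fa_0]$, where $R(z) := \sqrt{|\fa_0|^2 - z^2(1-|\fa_0|^2)}$. Fixing the branch so that $R(iy)>0$ for large $y$, direct computation yields $R(z) \sim -iz\sqrt{1-|\fa_0|^2}$, hence $s_2(z)\to\fc_0$, $s_1(z)\to 1/\bar\fc_0$ nontangentially, with $\Re R(z) \ge c_\delta|z|$ on each sector $\arg z \in [\delta,\pi-\delta]$. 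Under the substitution $\tilde s := (s_+ - s_2(z))/(s_+ - s_1(z))$, the standard reduction of a Ricatti equation with two known equilibria turns the frozen part into $\partial_\mu \tilde s = 2R(z)\tilde s$, and writing $\fa = \fa_0 + (\fa-\fa_0)$ collects the remainder into an additive forcing:
\[
\partial_\mu \tilde s(z,\ell) = 2R(z)\tilde s(z,\ell) + \tilde E(z,\ell),
\]
with the uniform bound $|\tilde E(z,\ell)| \le C_\delta|z|\cdot|\fa(\ell)-\fa_0|$ arising from $|s_2-s_1|=O(1)$ and $|s_+ - s_1| \gtrsim 1/|\fc_0|-1 > 0$ (which uses $|s_+|\le 1$).

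Variation of parameters gives, for any $L>0$,
\[
\tilde s(z,0) = e^{-2R(z)\mu(L)}\tilde s(z,L) - \int_0^L e^{-2R(z)\mu(l)}\tilde E(z,l)\,d\mu(l).
\]
The decisive step is a scale choice: I fix a large auxiliary parameter $M$ and set $\mu(L(z)) := M/\Re R(z)$; since $\Re R(z)\to\infty$ nontangentially, $\mu(L(z))\to 0$ and hence $L(z)\to 0$. The first term is then $O(e^{-2M})$ by uniform boundedness of $\tilde s$. For the second, letting $F(l) := \int_0^l |\fa - \fa_0|\,d\mu$, hypothesis~\eqref{eq07} gives $F(l) = o(\mu(l))$ as $l\downarrow 0$, so $F(L(z))\le \varepsilon \mu(L(z))$ with $\varepsilon = \varepsilon(z)\to 0$. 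An integration by parts against $F$ followed by the substitution $v := 2\Re R(z)\mu(l)$ bounds the integral term by
\[
|z|\int_0^L e^{-2\Re R\mu(l)}|\fa-\fa_0|\,d\mu \le C_\delta\varepsilon\Bigl(Me^{-2M} + \int_0^{2M} ve^{-v}\,dv\Bigr) \le C_\delta\varepsilon(Me^{-2M}+1),
\]
using $|z|/\Re R(z) = O_\delta(1)$. Combining, $|\tilde s(z,0)| \le C(e^{-2M} + \varepsilon(Me^{-2M}+1))$, which is made arbitrarily small by first choosing $M$ large and then taking $|z|$ large enough. Inverting the Möbius substitution gives $s_+(z)-s_2(z)\to 0$, and combined with $s_2(z)\to\fc_0$ this proves \eqref{8julg}.

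The main difficulty is balancing two competing demands on $\mu(L)$: exponential damping requires $\Re R(z)\cdot\mu(L) \gg 1$, while the Lebesgue-point hypothesis requires $\mu(L)\to 0$; both are satisfied simultaneously precisely because $\Re R(z)$ grows linearly in $|z|$ in nontangential sectors. A secondary technicality is confirming that the branch of $R(z)$ is well-defined throughout each sector (the branch points $\pm i\fa_0/\sqrt{1-|\fa_0|^2}$ lie off the sector for $|z|$ large) and tracking the $z$-dependence of the Möbius transformation; the key point is that the two-fixed-point Möbius (rather than a disk automorphism based at $\fc_0$) absorbs the quadratic nonlinearity of Ricatti into the bounded factor $1/(s_+-s_1)^2$, turning the problem into genuine linear analysis.
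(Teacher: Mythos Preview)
Your approach differs genuinely from the paper's. The paper rescales time via $r = |z|\ell$ and argues by compactness: along any sequence $z_k \to \infty$ with $s_+(z_k)$ convergent, the rescaled flow converges uniformly on compacts to a solution of the limiting autonomous ODE $\eta' = e^{i\phi}(\eta,1)A(0)(1,\eta)^\top$ (Lemma~\ref{l1}), and a phase-plane analysis (Lemma~\ref{l2}) shows the only solution remaining in $\overline{\bbD}$ for all $r \ge 0$ starts at $\fc_0$. You instead linearize the Ricatti equation via the M\"obius map built from the two equilibria of the frozen system and run a Duhamel estimate with a carefully chosen scale $\mu(L) \asymp 1/|z|$. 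Your route is more quantitative and avoids compactness; the paper's is more dynamical and treats all $|\fa_0|\le 1$ uniformly. In the principal regime $0<|\fa_0|<1$ your argument is correct (and the degenerate case $\fa_0=0$, where the frozen equation is already linear, is even easier, though you do not single it out).

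The boundary case $|\fa_0|=1$, however, is a genuine gap. Corollary~\ref{c01} applies only when $\fa$ is a unimodular constant $\mu$-almost everywhere, which is far stronger than having $|\fa(0)|=1$ at a single Lebesgue point; it cannot be invoked here. Moreover your mechanism breaks down structurally in this case: one computes $R(z)=\sqrt{|\fa_0|^2 - z^2(1-|\fa_0|^2)}\equiv 1$, so the damping rate $2R$ does not grow with $|z|$, while the forcing still carries a factor $|z|$. Simultaneously the two fixed points $s_1(z),s_2(z)$ both converge to $\fa_0$ as $z\to\infty$, so your lower bound $|s_+-s_1|\gtrsim 1/|\fc_0|-1$ degenerates to $0$ and the control on $\tilde E$ is lost. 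The paper's Lemma~\ref{l2} handles $|a|=1$ via a separate nilpotent computation for the repeated-root linear system; your approach would need an analogous modification (e.g.\ the substitution $\tilde s = 1/(s_+ - s_2)$ adapted to a double root) rather than an appeal to Corollary~\ref{c01}.
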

We note that for PdB gauge the analogous statement was proved in Theorem 3.1 in \cite{EKT18} by a different method.

The proof of Theorem \ref{thmSchurlimit} relies on a $z$-dependent rescaling of the Ricatti equation, following an idea used for Dirac operators by Clark--Gesztesy \cite{CG}. To prepare for this, we first reparametrize the $j$-monotonic family. 

\begin{remark}[reparametrizing to Lebesgue measure] \label{remarkLebesgue}
Consider the $j$-contractive family defined by
\[
\tilde \fA(z,\mu(\ell)) = \fA(z, \ell).
\]
Note that this is well-defined even if $\mu$ is not injective, since $\mu$ is a continuous increasing function and on any intervals on which $\mu$ is constant, $\fA$ is constant as well. Comparing with \eqref{Agaugematrixati} shows that $\tilde\fA$ has A-gauge parameters $\tilde \mu(\mu(\ell)) = \mu(\ell)$, $\tilde \fa(\mu(\ell)) = \fa(\ell)$. In particular, $\tilde \fA$ has Lebesgue measure as its parameter. Conditions and conclusions such as those in Theorem~\ref{thmSchurlimit} are invariant with respect to this reparametrization; in other words, we can assume throughout the proof that $\mu$ is Lebesgue measure.
\end{remark}
The Ricatti equation \eqref{Ricattiequation} takes the form
\[
\partial_\ell s_+(z,\ell) = -\begin{pmatrix} s_+(z,\ell) & 1 \end{pmatrix} (iz A(\ell) -B(\ell))  \begin{pmatrix} 1 \\ s_+(z,\ell) \end{pmatrix}.
\]
We introduce a $z$-dependent ``fast variable'' $r = \lvert z \rvert \ell$ and define
\[
s(z,r) = s_{+}\left(z, r/|z| \right),
\]
which obeys the differential equation
\begin{equation}\label{8julc}
\partial_r s(z, r) =  -\begin{pmatrix} s(z,r) & 1 \end{pmatrix} \left(\frac{ iz}{|z|} A\left(\frac{r}{|z|}\right) -\frac{1}{|z|} B \left(\frac{r}{|z|}\right) \right)  \begin{pmatrix} 1 \\ s(z,r) \end{pmatrix}.
\end{equation}
We are going to take a limit in \eqref{8julc} when $z$ nontangentially tends to $+i\infty$. For this we need the following lemma.
\begin{lemma}\label{l1}
Assume that $\mu$ is Lebesgue measure and \eqref{eq07} holds. Define $A(0)$ by \eqref{ABfromfa} using the value $\fa(0)$. Let a sequence $\{z_k\}$ be such that $\lvert z_k \rvert \to \infty$, $\arg z_k \to \phi + \pi/2$ for some $\phi \in (-\pi/2,\pi/2)$, and $s(z_k, 0) \to s_0$ for some $s_0 \in \overline{\bbD}$. Then the initial value problem
\begin{equation}\label{7jula}
\eta'(r) = e^{i\phi} \begin{pmatrix} \eta(r) & 1 \end{pmatrix} \cP(0)  \begin{pmatrix} 1 \\ \eta(r) \end{pmatrix}, \qquad \eta(0) = s_0
\end{equation}
has a global solution which obeys $\eta(r) \in \overline{\bbD}$ for all $r \in [0,\infty)$, and the functions $s(z_k, \cdot)$ converge uniformly on compacts to $\eta$ as $k\to\infty$.
\end{lemma}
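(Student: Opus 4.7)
The plan is to apply Arzel\`a--Ascoli to the family $\{s(z_k,\cdot)\}$, identify any subsequential limit via the integral form of \eqref{8julc}, and then upgrade subsequential to full convergence using uniqueness for \eqref{7jula}. By Remark \ref{remarkLebesgue} I may assume $\mu$ is Lebesgue measure; then $|\fa|\le 1$ makes $\|A(\ell)\|$ and $\|B(\ell)\|$ uniformly bounded. Combined with $|s(z_k,r)|\le 1$ (each coefficient-stripped Schur function takes values in $\overline{\bbD}$), the right-hand side of \eqref{8julc} is bounded uniformly in $k$ and $r$, so the $s(z_k,\cdot)$ are uniformly Lipschitz on $[0,\infty)$. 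A diagonal Arzel\`a--Ascoli argument yields a subsequence along which $s(z_k,\cdot)\to \sigma$ uniformly on compacts, with $\sigma:[0,\infty)\to\overline{\bbD}$ and $\sigma(0)=s_0$.

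To identify $\sigma$, I pass to the integral form of \eqref{8julc} and take $k\to\infty$ on the right. The $B$-contribution vanishes, since $\|B\|$ is bounded and carries a factor $1/|z_k|$. For the $A$-contribution, $iz_k/|z_k|\to -e^{i\phi}$, and the change of variables $l=\rho/|z_k|$ gives
\[
\int_0^r A(\rho/|z_k|)\,d\rho \;=\; r\,A(0) + |z_k|\int_0^{r/|z_k|}\bigl(A(l)-A(0)\bigr)\,dl,
\]
whose remainder tends to $0$ for each fixed $r$ exactly by the Lebesgue-point hypothesis \eqref{eq07}. Decomposing the full $A$-integrand into the piece with $A(\rho/|z_k|)$ replaced by $A(0)$ plus the error $[A(\rho/|z_k|)-A(0)]$: the first piece converges to the expected limit integral by uniform convergence of $s(z_k,\cdot)$ to $\sigma$ on $[0,r]$, while the error integral is bounded by $4\int_0^r \|A(\rho/|z_k|)-A(0)\|\,d\rho\to 0$ using $|s(z_k,\rho)|\le 1$ and the same computation. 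The limiting identity is precisely the integral form of \eqref{7jula}.

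The right-hand side of \eqref{7jula} is a quadratic polynomial in $\eta$, hence Lipschitz on bounded sets, so \eqref{7jula} has at most one solution with initial value $s_0$; every subsequential limit of $\{s(z_k,\cdot)\}$ therefore coincides with this $\eta$, and the full sequence converges uniformly on compacts. That $\eta$ takes values in $\overline{\bbD}$ and is defined on all of $[0,\infty)$ is automatic from the uniform convergence of the $\overline{\bbD}$-valued functions $s(z_k,\cdot)$. The main technical step is the Lebesgue-point averaging of $A(\rho/|z_k|)$: hypothesis \eqref{eq07} is the sole place where the regularity of $\fa$ at $0$ enters, and without it there is no mechanism for $A(0)$ to emerge as the effective coefficient in the rescaled limit.
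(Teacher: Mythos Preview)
Your argument is correct and takes a genuinely different route from the paper's. The paper first fixes the solution $\eta$ of \eqref{7jula} on a maximal interval $[0,L)$ where $|\eta|\le 2$, then derives a Gronwall-type differential inequality for $f(z,r)=s(z,r)-\eta(r)$: from \eqref{8julc} and \eqref{7jula} one gets $|f'|\le 4\|\cR(z,r)-e^{i\phi}A(0)\|+10|f|$, and integrating $e^{-20r}|f|^2$ yields the explicit bound
\[
|f(z,r)|^2 \le e^{20r}\Bigl(|f(z,0)|^2 + 24\int_0^r \|\cR(z,t)-e^{i\phi}A(0)\|\,dt\Bigr),
\]
which tends to $0$ by the Lebesgue-point hypothesis. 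Global existence $L=\infty$ is then obtained by a bootstrap: if $L<\infty$ there would be $r_0$ with $|\eta(r_0)|=3/2$, contradicting $\eta(r_0)=\lim_k s(z_k,r_0)\in\overline{\bbD}$.

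Your compactness approach reverses the logic: you extract the limit $\sigma$ first via Arzel\`a--Ascoli, identify it as a solution of \eqref{7jula} by passing to the limit in the integral equation, and then invoke ODE uniqueness to upgrade subsequential to full convergence. This avoids the local-existence/bootstrap step entirely, since $\sigma$ is automatically global and $\overline{\bbD}$-valued as a uniform limit of such functions. The paper's route, in exchange, produces a quantitative rate in terms of $|s(z,0)-\eta(0)|$ and the Lebesgue-point modulus, which your argument does not. Both proofs use \eqref{eq07} at exactly the same place: to show $\int_0^r\|A(\rho/|z_k|)-A(0)\|\,d\rho\to 0$.
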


\begin{proof}
Since $|\fa(0)| \le 1$, there exists $c > 0$ independent of $s_0\in\overline{\bbD}$ such that the initial value problem \eqref{7jula} has a solution on $[0,c]$ such that $\lvert \eta \rvert \le 2$. Define $L \in (0, +\infty]$ to be the supremum of all such $c$. Later on, we will work with $r \in [0,L)$ and $\lvert z \rvert \ge 1$.  Note that for such $r$, $z$ we have
\[
 \left\lVert\cR(z,r) \right\rVert \le 3, \qquad \cR(z,r)  = -\frac{ iz}{|z|} A\left(\frac{r}{|z|}\right) +\frac{1}{|z|} B \left(\frac{r}{|z|}\right).  
\]
We also have $\|A(0)\| \le 2$ by construction. Using the identity
$$
(T_1 h_1, h_1) - (T_2 h_2, h_2) = 
((T_1 - T_2) h_1, h_1) + (T_2 (h_1 - h_2), h_2) + (T_2 h_1, h_1 - h_2)
$$
for linear operators, we obtain
from \eqref{8julc}, \eqref{7jula} the estimate 
\begin{equation}\label{11jula}
|f'(z,r)|
\le 4 \left\lVert \cR(z,r) - e^{i\phi} A(0) \right\rVert + 10|f(z,r)|, \quad f(z,r) = s(z,r) - \eta(r),
\end{equation}
on $[0,L)\times \{|z|\ge 1\}$.
Since $f$ is absolutely continuous with respect to $r$, the same is true for the function $h(z,r) = e^{-20 r} \lvert f(z,r) \rvert^2$.  Moreover, we have
\[
h'(r) = 2 e^{-20 r} \Re\left( \overline{f(r)} f'(r) \right) - 20 e^{-20 r} \lvert f(r) \rvert^2,
\]
hence
\begin{align*}
h'(r) 
&\le 2 e^{-20 r}|f(r)| \left(4 \left\lVert \cR(z,r) - e^{i\phi} A(0) \right\rVert + 10|f(z,r)|\right)- 20 e^{-20 r} \lvert f(r) \rvert^2 \\
&\le 8 e^{-20 r}|f(r)| \left\lVert \cR(z,r) - e^{i\phi} \cP(0) \right\rVert \le 24 \left\lVert \cR(z,r) - e^{i\phi} A(0) \right\rVert.
\end{align*}
Integrating last inequality, we obtain
\begin{equation}\label{8jule}
\lvert f(z,r)\rvert^2 \le e^{20 r}\left( \lvert f(z,0) \rvert^2 + 24 \int_0^r \left\lVert \cR(z,t) - e^{i\phi}  A(0) \right\rVert dt \right).
\end{equation}
Since $0$ is a Lebesgue point for $\fa$, we have
\[
\int_0^r \left\lVert \cR(z_k, t) - e^{i\phi} A(0) \right\rVert dt \to 0
\]
along any sequence $z_k$ with $\lvert z_k \rvert \to \infty$ and $\arg(-i z_k) \to \phi$. Thus, \eqref{8jule} together with our assumption $s(z_k, 0) \to s_0 = \eta(0)$ implies that $s(z_k, r) \to \eta(r)$ uniformly in $r$ on compact subsets of $[0,L)$. It remains to show that $L = +\infty$. To this end, note that if this is not the case, we have $\lvert \eta(r_0) \rvert = 3/2$ for some $r_0 \in [0, L)$. But the above argument shows that $\eta(r_0) = \lim_{k}s(z_k, r_0)$ is in the unit disk, thus giving a contradiction.
\end{proof}
It turns out that condition $\eta(r) \in \bbD$, $r \ge 0$, for a solution $\eta$ of \eqref{7jula} determines $\eta$ uniquely.
\begin{lemma}\label{l2}
Let $a \in \overline{\bbD}$ and $\phi \in (-\pi/2, \pi/2)$. If $\eta : [0,\infty) \to \overline{\bbD}$ solves the differential equation
\begin{equation}\label{7julb}
\eta'(r) = e^{i\phi} \begin{pmatrix} \eta(r) & 1 \end{pmatrix}  \begin{pmatrix} 1 & - \overline{a} \\ -a  & 1 \end{pmatrix} \begin{pmatrix} 1 \\ \eta(r) \end{pmatrix}
\end{equation}
then $\eta(0) =  \frac{ a }{ 1+ \sqrt{ 1-  \lvert a\rvert^2}}$. In other words, for any other initial value $\eta(0)$, the solution of the initial value problem exits $\overline{\bbD}$ in finite time. \end{lemma}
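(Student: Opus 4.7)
The first step is to simplify the right-hand side of \eqref{7julb} by expanding the quadratic form:
\[
\eta'(r) = e^{i\phi}\bigl(-\bar a\,\eta(r)^2 + 2\eta(r) - a\bigr).
\]
The fixed points are the roots of $\bar a\,\eta^2 - 2\eta + a = 0$, namely
\[
\eta_\pm = \frac{1 \pm \sqrt{1-|a|^2}}{\bar a}
\]
(interpreted in the limit when $a=0$ or $|a|=1$). A direct check gives $\eta_-\eta_+ = a/\bar a$, hence $|\eta_-|\le 1 \le |\eta_+|$ with equality only when $|a|=1$, in which case $\eta_- = \eta_+ = a$. Rationalizing,
\[
\eta_- = \frac{a}{1+\sqrt{1-|a|^2}},
\]
which is the candidate value for $\eta(0)$.

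Assume $\eta \not\equiv \eta_-$. By uniqueness for \eqref{7julb} applied at the constant solution $\eta_-$, we have $\eta(r)\neq\eta_-$ for every $r \ge 0$. Suppose first that $|a|<1$. Introduce $u(r)=\eta(r)-\eta_-$ and $v(r)=1/u(r)$. A short computation using $\eta'= -e^{i\phi}\bar a(\eta-\eta_-)(\eta-\eta_+)$ yields the linear ODE
\[
v'(r) + 2 e^{i\phi}\sqrt{1-|a|^2}\; v(r) = e^{i\phi}\bar a,
\]
whose general solution is
\[
v(r) = \frac{\bar a}{2\sqrt{1-|a|^2}} + C\, e^{-2 e^{i\phi}\sqrt{1-|a|^2}\, r}.
\]
Because $\phi\in(-\pi/2,\pi/2)$, we have $\Re(e^{i\phi})=\cos\phi>0$, so the exponential factor decays and $v(r)\to \bar a/(2\sqrt{1-|a|^2})$ as $r\to\infty$. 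Consequently
\[
\eta(r) = \eta_- + \frac{1}{v(r)}\;\longrightarrow\;\eta_- + \frac{2\sqrt{1-|a|^2}}{\bar a} = \eta_+.
\]
Since $|\eta_+|>1$, this forces $\eta(r)\notin\overline{\bbD}$ for all large $r$, contradicting the hypothesis. Note $v(r)$ never vanishes, since $|u|\le 2$ gives $|v|\ge 1/2$ along the trajectory.

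The case $|a|=1$ is handled similarly but with a degenerate linearization. Setting $u=\eta-a$ the ODE becomes $u'=-e^{i\phi}\bar a\, u^2$; with $v=1/u$ one gets $v(r)=v(0)+e^{i\phi}\bar a\, r$, hence $\eta(r)=a+1/v(r)$. From $|\eta(r)|^2 \le 1$ and $|a|=1$ one derives
\[
\Re\!\left(\frac{\bar a}{v(r)}\right) \le -\frac{1}{2|v(r)|^2}<0,\qquad r\ge 0.
\]
But as $r\to\infty$ we have $\bar a/v(r) \sim e^{-i\phi}/r$, so $\Re(\bar a/v(r))\sim (\cos\phi)/r >0$, a contradiction. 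Hence $\eta\equiv a=\eta_-$ in this case as well. In both cases we conclude $\eta(0)=\eta_-$, as desired. The only slightly delicate point is verifying that $v$ cannot hit zero in finite time, which is immediate from the a priori bound $|\eta-\eta_-|\le 2$.
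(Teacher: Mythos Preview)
Your proof is correct. Both you and the paper linearize the Riccati equation and show that every non-constant bounded solution must tend to the repelling fixed point, which lies outside $\overline{\bbD}$. The difference is in how the linearization is carried out: the paper writes $\eta = w_1/w_2$ where $(w_1,w_2)$ solves the $2\times 2$ linear system $w' = -e^{i\phi} w \left(\begin{smallmatrix}1 & -\bar a\\ -a & 1\end{smallmatrix}\right) j$ and then diagonalizes (or, when $|a|=1$, uses the Jordan form), whereas you use the scalar substitution $v = 1/(\eta - \eta_-)$ to reduce directly to a first-order linear ODE. The paper also treats $a=0$ separately, while your argument absorbs it into the case $|a|<1$ (with $\eta_+$ interpreted as $\infty$). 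Your route is slightly more self-contained for this lemma in isolation; the paper's projective linearization mirrors the transfer-matrix structure used throughout, which is why it is natural there. One small point: you should note explicitly that $C\neq 0$ in your general solution for $v$, since $C=0$ corresponds to $\eta\equiv\eta_+\notin\overline{\bbD}$, which is excluded by hypothesis.
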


\begin{proof}
If $ a = 0$, then the Ricatti equation reduces to $\eta'(r) = 2 e^{i\phi} \eta(r)$, so the general solution is $\eta(r) = e^{2 r e^{i\phi}} \eta(0)$. In particular, any initial value $\eta(0) \neq 0$ gives an exponentially growing solution.

\medskip

If $a \neq 0$, the general solution can be found explicitly as $\eta = w_1 /w_2$ where $w = \begin{pmatrix}w_1 & w_2 \end{pmatrix}$ solves the linear equation
\begin{equation}\label{8jula}
w' = - e^{i\phi} w \begin{pmatrix} 1 & - \overline{a} \\ -a  & 1 \end{pmatrix} j.
\end{equation}
To see this, let us multiply \eqref{8jula} by $\begin{pmatrix} 1 & - w_1/w_2 \end{pmatrix}^\top$ from the right,
$$
w'_1 - \frac{w_1}{w_2} w'_2 
= e^{i\phi}w_2\begin{pmatrix}w_1/w_2 & 1 \end{pmatrix} \begin{pmatrix} 1 & - \overline{a} \\ -a  & 1 \end{pmatrix}\begin{pmatrix} 1 \\  \frac{w_1}{w_2} \end{pmatrix},
$$
and note that this equation is equivalent to \eqref{7julb} if $\eta = w_1/w_2$. Put $\rho = \sqrt{1-|a|^2}$. Observe that 
for the matrix
$$
T = - \begin{pmatrix} 1 & - \overline{a} \\ -a  & 1 \end{pmatrix} j = 
 \begin{pmatrix} 1 &  \overline{a} \\ -a  & -1 \end{pmatrix}
$$
we have
$$
\begin{pmatrix}a& 1\mp\rho\end{pmatrix} T  = \pm\rho\begin{pmatrix}a & 1\mp\rho\end{pmatrix},
$$
so that if $0 < \lvert a\rvert < 1$, the general solution of \eqref{8jula} has the form 
$$
\begin{pmatrix}
w_1 & w_2
\end{pmatrix}
= c_1 e^{\rho r e^{i\phi}} \begin{pmatrix} a & 1 -\rho \end{pmatrix} + c_2 e^{-\rho r e^{i\phi}} \begin{pmatrix} a & 1+\rho \end{pmatrix}
$$
for some constants $c_1$, $c_2$. Since  $\lvert a\rvert > 1 - \rho$, we must have $c_1 = 0$ if the condition  $|\eta(r)| \le 1$ holds for large $r \ge 0$. Meanwhile, if $c_1 = 0$ and $c_2 \neq 0$, then $\eta$ is the constant function $\frac a{1+\rho}$ and the result follows.
Now consider the case where $\lvert a\rvert = 1$. Then the general solution of \eqref{8jula} has the form 
\begin{align*}
\begin{pmatrix}
w_1 & w_2
\end{pmatrix} 
&= c_1 \begin{pmatrix} a & 1 \end{pmatrix} + c_2 \left(r\begin{pmatrix} a & 1 \end{pmatrix} + e^{-i\phi}\begin{pmatrix} 0 & -1  \end{pmatrix}\right), \\
&= \begin{pmatrix} a(c_1 + c_2r) & c_1 + c_2 -e^{-i\phi} \end{pmatrix}.
\end{align*}
We see that 
$$
\eta(r) =  \frac{a(c_1 + c_2r)}{c_1 + c_2 -c_2e^{-i\phi}}
$$
is in $\bbD$ for large $r$ if and only if $c_2 = 0$, in which case $\eta$ is again the constant function, $\eta(0) = \eta(r) = a$.
\end{proof}

\begin{proof}[Proof of Theorem~\ref{thmSchurlimit}]
Since $s_{+}$ takes values in $\overline{\bbD}$, by Lemma \ref{l1} and Lemma \ref{l2} it suffices to show that for fixed $\delta > 0$, the values of $s_+(z)$ have only one accumulation point as $\lvert z \rvert \to \infty$, $\arg z \in [\delta,\pi-\delta]$. For this, assume that $s_+(z_k)$ is convergent for some sequence $z_k\to \infty$ with $\arg z_k \in [\delta,\pi-\delta]$. By compactness, we can pass to a subsequence such that $\arg (-iz_k)$ converges. Then the limit of $s(z_k)$ is equal to $\frac{\fa(0)}{1+ \sqrt{1 - \lvert \fa(0)\rvert^2}}$. Thus, this is the only accumulation point, so \eqref{8julg} holds.
\end{proof}

\begin{proof}[Proof of Prop.~\ref{cor218}]
This follows from Theorem~\ref{thmSchurlimit} by the well-known fact that almost every point of a locally integrable function with respect to $\mu$ is a Lebesgue point of this function with respect to $\mu$.
\end{proof}

\section{Krein-de Branges formula for exponential type}\label{s5} 
As a corollary of  Theorem \ref{thmSchurlimit}, in this section we give a new proof of the de Branges mean type theorem (see Section 39 in \cite{dB}) discovered by Krein \cite{Krein97}. 

\begin{proof}[Proof of Theorem~\ref{thjul10}]
Using Prop.~\ref{l07}, let us pass to  the $j$-monotonic family in A-gauge $\fA(z,t) = \cA(z,t) U(t)$ where $U(t) \in \SU(1,1)$. The coefficients of the new canonical system obey  
\[
\hat\nu = \nu, \qquad \hat \cP = U^{-1} \cP jUj, \qquad \hat \cQ = U^{-1}  \cQ jUj - U^{-1} \partial_\nu U j.
\]
Since 
$$
\limsup_{z\to\infty}\frac{\log\| \cA(z,t)U(t)\|}{|z|} = \limsup_{z\to\infty}\frac{\log\| \cA(z,t)\|}{|z|} 
$$
and $\det \hat \cP(t) = \det \cP(t)$, it suffices to prove the theorem for $j$-monotonic families in A-gauge. 

 Formula \eqref{eq08} in the proof of Proposition \ref{propRicatti} gives
\begin{align*}
\partial_\mu \phi(z,\ell) 
&= \phi(z,\ell) \begin{pmatrix} s_+(z,\ell) & 1 \end{pmatrix} \begin{pmatrix}(-iz-1)\overline{\fa(\ell)}  \\iz  \end{pmatrix}\\
&=-\phi(z,\ell)((iz+1)s_+(z,\ell)\overline{\fa(\ell)}-iz).
\end{align*}
Since $\phi(z,0) = 1$, integrating and evaluating at $z = iy$ we get
\begin{equation}\label{112jun111}
\log\frac{1}{\phi(iy,\ell)}=\int_0^\ell \left((-y+1)s_+(iy,l)\overline{\fa(l)}+y \right) \, d\mu(l).
\end{equation}
Let us divide both sides by $y$ and pass to the limits as $y\to+\infty$. According to Prop.~\ref{cor218}, the right hand side will tend to the desired limit
\[
\int_{0}^{\ell}\left(- \frac{\lvert \fa\rvert^2}{1 + \sqrt{1- \lvert \fa\rvert^2}} + 1\right) \,d\mu = \int_{0}^{\ell}\sqrt{1- \lvert \fa\rvert^2}\,d\mu =
\int_{0}^{\ell}\sqrt{\det A}\,d\mu. 
\]
On the other hand, from \eqref{10jun1} we see that
\begin{align*}
\frac{1}{\phi(z,\ell)} 
&= \begin{pmatrix}
s_+(z,\ell) & 1 
\end{pmatrix}
\cA^{-1}(z,\ell)\begin{pmatrix}
0 \\ 1 
\end{pmatrix} \\
&= \begin{pmatrix}
s_+(z,\ell) & 1 
\end{pmatrix}
\begin{pmatrix}a_{22}&-a_{12}\\
-a_{21}&a_{11}
\end{pmatrix}\begin{pmatrix}
0 \\ 1 
\end{pmatrix} \\
&=a_{11}(z,\ell)-a_{12}(z,\ell)s_+(z,\ell)\\
&=a_{11}(z,\ell)\left(1-\frac{a_{12}(z,\ell)}{a_{11}(z,\ell)} s_+(z,\ell) \right).
\end{align*}
Since the family $\cA(z,\ell)$ is $j$-monotonic, we have $|a_{11}(z,\ell)|^2-|a_{12}(z,\ell)|^2\ge 1$ for all $\ell \ge 0$ and  $z \in \bbC_+$. It follows that the analytic function $1-\frac{a_{12}(\cdot,\ell)}{a_{11}(\cdot,\ell)}s_+(\cdot,\ell)$ has a positive real part in $\bbC_+$ and therefore, it is outer.
Since $a_{11}(z,\ell)$ is an entire function of bounded characteristic in the upper half plane without zeros, we have
$$
\lim_{y\to\infty}\frac{\log |a_{11}(iy,\ell)|}{y}=\lim_{y\to\infty} \frac 1{y}\log \frac{1}{|\phi(iy,\ell)|}=\int_{0}^{\ell}\sqrt{\det A}\,d\mu. 
$$
Since $\cA(z,\ell)\in\SL(2,\bbC)$ is $j$-contractive, its entries obey in the upper half plane
$$
1+|a_{12}(z,\ell)|^2\le |a_{11}(z,\ell)|^2,\quad  |a_{22}(z,\ell)|^2\le 1+|a_{21}(z,\ell)|^2\le |a_{11}(z,\ell)|^2.
$$
Thus, by the symmetry $\|\cA(z,\ell)\|=\|\cA(\bar z,\ell)\|$,
\begin{equation}\label{31oct20}
 \lim_{y\to\pm \infty} \frac{\log\|\cA(iy,\ell)\| }{y}=\int_{0}^{\ell}\sqrt{\det A}\,d\mu. 
\end{equation}
Finally, since the entries of $\cA(z,\ell)$ are entire functions of bounded characteristic in upper/lower half plane, the normal limit \eqref{31oct20} implies the unrestricted limit as $z \to\infty$ \eqref{9jul420}--\eqref{9jul4} \cite{Kre,Krein97,BorSod} and \cite[Chapter I, \S 10]{dB}.
\end{proof}

\section{Breimesser-Pearson theorem for $j$-monotonic families}\label{s6}
In this section we prove a version of the Breimesser-Pearson theorem \cite{BP03}. After the seminal work of Remling \cite{Rem11}, Breimesser-Pearson theorem became the standard tool in the study of reflectionless and almost periodic operators. For canonical systems in Potapov-de Brange's gauge it was first proved by Acharya \cite{Ach16}. In principle, results of this section can be obtained from an extended version of Acharya's theorem via the ``twisted shifts'' technique used in Section~7 of \cite{Remling}. However, we prefer to give a direct proof here to make the paper more self-contained. We discuss only the ``spectral part'' and give references to results in function theory appearing in the proof.

\medskip
 
Given a $j$-monotonic family $\{\cA(z,\ell)\}_{\ell \in \bbR}$, recall that its Schur spectral functions~$s_\pm$
are given by 
\begin{align}\label{eq12}
&\begin{pmatrix}s_+(z), 1\end{pmatrix} \simeq \lim_{\tau \to +\infty} \begin{pmatrix} w & 1 \end{pmatrix}\cA(z, \tau)^{-1},\\
&\begin{pmatrix}s_-(z), 1\end{pmatrix} \simeq \lim_{\tau \to +\infty} \begin{pmatrix} w & 1 \end{pmatrix}(j_1\cA(z, -\tau)j_1)^{-1},
\label{eq13}
\end{align}
(these limits do not dependent on $w \in \bbD$) and their shifted versions by
\begin{align}\label{eq15}
&\begin{pmatrix}s_+(z,\ell), 1\end{pmatrix} \simeq \begin{pmatrix}s_+(z), 1\end{pmatrix} \cA(z,\ell),\\
&\begin{pmatrix}s_-(z,\ell), 1\end{pmatrix} \simeq \begin{pmatrix}s_-(z), 1\end{pmatrix} j_1\cA(z,\ell)j_1.\label{eq16}
\end{align}
Since $s_{\pm}$ are Schur functions, for Lebesgue almost every $x \in \bbR$ there exist the limits 
\begin{equation}\label{eq11}
s_{\pm}(x) = \lim_{\e\to+0}s_{\pm}(x+i\e).
\end{equation}
Define the absolutely continuous spectrum $\sE$ of the $j$-monotonic family $\{\cA(z,\ell)\}_{\ell \in \bbR}$ to be the essential closure of the set $\left\{x \in \bbR: \; |s_{+}(x)| + |s_{-}(x)| < 2\right\}$ with respect to the Lebesgue measure. For $z \in \bbD$, let $\omega_z$ be the harmonic measure in $\bbD$: 
$$
\omega_{z}(t) = \int_{S}\frac{1-|z|^2}{|1 - \bar \xi z|^2}\,dm(\xi), \qquad S \subset \bbT,
$$
where $m$ denotes the normalized Lebesgue measure on the unit circle $\bbT$, $S$ is a measurable subset of $\bbT$. 

\medskip

Here is our version of the Breimesser-Pearson theorem. 
\begin{theorem}\label{BPthm}
	Let $\{\cA(z,\ell)\}_{\ell \in \bbR}$ be a $j$-monotonic family in the limit point case at $\pm\infty$, and let $\sE$ be its absolutely continuous spectrum. Assume that $|s_\pm(i,\ell)| < 1$ for every $\ell \ge 0$. Then for every Borel subsets $e \subset \sE$, $|e| < \infty$, $S \subset \bbT$,   we have
	$$
	\lim_{\ell \to +\infty}\left(\int_{e}\omega_{s_-(x,\ell)}(\bar S)  - 
	\int_{e}\omega_{s_+(x,\ell)}(t)\,dx\right) = 0.
	$$
\end{theorem}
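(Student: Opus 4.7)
My plan proceeds via a symmetry reduction, an interior approximation of Schur boundary values, and a rapid-oscillation argument in the style of Breimesser--Pearson.

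First, I exploit the matrix structure on the real axis. For $x \in \bbR$, combining the reflection symmetry \eqref{reflectionsymmetry0} with $\cA(x,\ell)\in\SU(1,1)$ forces
\[
\cA(x,\ell) = \begin{pmatrix} a & b \\ \bar b & \bar a \end{pmatrix},\qquad |a|^2-|b|^2=1,
\]
so the induced M\"obius automorphism of $\bbD$ is $\phi_{\ell,x}(w)=(aw+\bar b)/(bw+\bar a)$. A direct computation from \eqref{eq15}--\eqref{eq16} yields
\[
s_+(x,\ell)=\phi_{\ell,x}(s_+(x)),\qquad s_-(x,\ell)=\overline{\phi_{\ell,x}(\overline{s_-(x)})}.
\]
Using the conjugation symmetry $\omega_{\bar w}(\bar S)=\omega_w(S)$, the claim reduces to showing
\[
\lim_{\ell\to\infty}\int_e\bigl[\omega_{\phi_{\ell,x}(\overline{s_-(x)})}(S) - \omega_{\phi_{\ell,x}(s_+(x))}(S)\bigr]\,dx=0,
\]
i.e., two orbits under a common M\"obius action must become indistinguishable in harmonic measure averaged over $x\in e$.

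Next, I replace the boundary Schur values $s_+(x), \overline{s_-(x)}$ by nontangential interior values $s_+(x+i\epsilon), \overline{s_-(x+i\epsilon)}$ for small $\epsilon > 0$. By Fatou's theorem these converge a.e.\ as $\epsilon\downarrow 0$, and the bound $0\le\omega\le 1$ with $|e|<\infty$ gives a Breimesser--Pearson-type estimate uniform in Borel $A \subset \bbT$:
\[
\lim_{\epsilon\downarrow 0}\;\sup_A \int_e \bigl|\omega_{s_+(x+i\epsilon)}(A) - \omega_{s_+(x)}(A)\bigr|\,dx = 0,
\]
and similarly for $\overline{s_-}$ viewed as boundary values of the Schur function $z \mapsto \overline{s_-(\bar z)}$ on $\bbC_-$. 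Applied with $A = \phi_{\ell,x}^{-1}(S)$, this reduces the problem, for each fixed $\epsilon$, to the analogous statement with $s_\pm(x)$ replaced by $s_\pm(x+i\epsilon)$. By M\"obius equivariance $\omega_{\phi(u)}(S) = \omega_u(\phi^{-1}(S))$, the task becomes comparing the harmonic measures at the two smooth interior points $s_+(x+i\epsilon), \overline{s_-(x+i\epsilon)}$ evaluated on the $x$-varying set $\phi_{\ell,x}^{-1}(S)$.

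Finally, for fixed $\epsilon>0$ the arguments lie in a compact subset of $\bbD$ and depend smoothly on $x$, so their Poisson kernels $P_{s_+(x+i\epsilon)}, P_{\overline{s_-(x+i\epsilon)}}$ are equicontinuous on $\bbT$. The limit then follows by a Riemann--Lebesgue-type mechanism: as $\ell\to\infty$, the set $\phi_{\ell,x}^{-1}(S)$ oscillates increasingly rapidly in $x$ (the derivative $\partial_x\phi_{\ell,x}$ is controlled by the canonical system structure via the Ricatti equation, Prop.~\ref{propRicatti}), and Fubini together with the equicontinuity of the Poisson kernels forces the averaged difference to vanish. The main obstacle is establishing enough oscillation uniformly on $e\subset\sE$ using only the a.c.-spectrum condition $|s_\pm(x)|<1$ on $\sE$ and the non-degeneracy $|s_\pm(i,\ell)|<1$; a diagonal choice $\epsilon=\epsilon(\ell)\downarrow 0$ then reconciles the two limits.
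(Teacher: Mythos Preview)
Your reduction in the first paragraph is correct, but both remaining steps contain genuine gaps. In step~3, the Breimesser--Pearson estimate (as in \cite{BP03} or \cite[Thm.~7.6]{Remling}) has the form
\[
\sup_{A\subset\bbT}\left|\int_e\omega_{s(x+i\epsilon)}(A)\,dx-\int_e\omega_{s(x)}(A)\,dx\right|\to 0
\]
uniformly over Schur functions, with $A$ a \emph{fixed} Borel set. Your application with $A=\phi_{\ell,x}^{-1}(S)$ depending on $x$ is not covered: neither $\sup_A\bigl|\int_e\,\cdot\,\bigr|$ nor even $\sup_A\int_e|\,\cdot\,|$ controls $\int_e|\,\cdot\,(A_x)|\,dx$ for $x$-dependent $A_x$ (the latter would require $\int_e\sup_A|\,\cdot\,|$). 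The paper's proof circumvents this by first partitioning $e=\bigcup e_j$ so that $s_+(x)$ is hyperbolically $\epsilon$-close to constants $s_j$ on $e_j$, and then applying the estimate to the genuine Schur functions $z\mapsto s(z,\ell)$ defined by $(s(z,\ell),1)\simeq(\bar s_j,1)\,\overline{\cA(\bar z,\ell)}$, with the fixed target set $\bar S$.

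More seriously, the ``Riemann--Lebesgue oscillation'' you invoke in step~4 is not the mechanism and your justification is wrong: the Ricatti equation of Prop.~\ref{propRicatti} differentiates in $\ell$, not in $x$, so it says nothing about $\partial_x\phi_{\ell,x}$. Since each $\phi_{\ell,x}$ is a hyperbolic isometry of $\bbD$, the two orbit points $\phi_{\ell,x}(s_+(x))$ and $\phi_{\ell,x}(\overline{s_-(x)})$ remain at fixed hyperbolic distance for all $\ell$, so nothing is gained pointwise. The actual argument, absent from your sketch, takes place in $\bbC_+$: the reflected family $\cB(z,\ell)=j_1\cA(z,\ell)^{-1}j_1$ is $j$-monotonic, and for $z\in\bbC_+$ both $s_-(z,\ell)$ and the auxiliary $s(z,\ell)$ (built from an arbitrary interior point) lie in the Weyl disk $D_\cB(z,\ell)$, whose hyperbolic diameter tends to zero. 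This shrinking is exactly where the non-degeneracy hypothesis $|s_+(i)|<1$ enters, via $\kappa(\ell)\to s_+(i)$ in the explicit description of $D_\cB(i,\ell)$; you never use that hypothesis.
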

\begin{proof}
Define the function
\[
\gamma(w,z) = \frac{ 2 \lvert w - z \rvert}{\sqrt{ 1-  \lvert w \rvert^2} \sqrt{ 1-  \lvert z \rvert^2}}, \qquad w,z \in \bbD, 
\]
which is related to the hyperbolic distance on $\bbD$. Using the Cayley transform and the estimate (7.21) in \cite{Remling} it is easy to check that
\begin{equation}\label{eq14}
\lvert \omega_w(t) - \omega_z(t) \rvert \le \gamma(w,z), \qquad w,z \in \bbD, \qquad S \subset \bbT.
\end{equation}
It is also possible to check (directly or using the Cayley transform and Proposition 3 in \cite{BP03}) that $\gamma$ is invariant under the M\"obius transformations.
Let us fix a finite partition $e = \cup_{0}^{N}e_j$ and a collection $\{s_j\}_{j = 1}^{N}$ such that $|e_0| < \e$, for $j \ge 1$ the sets $e_j$ are bounded, and 
	$$
	\gamma(s_+(x), s_j)<\e, \qquad x \in e_j.
	$$ 
It is possible to choose $y_*>0$ so small that for every function $s$ of Schur class in $\bbC_+$, for every $0<y<y_*$ and every $1 \le j \le N$ we have
\begin{equation}\label{eq02}
\sup_{S \subset \bbT}\left|\int_{e_j}\omega_{s(x+iy)}(t) - \int_{e_j}\omega_{s(x)}(t)\right| \le \e|e_j|.
\end{equation}
This is the key analytic ingredient of the Breimesser-Pearson theorem, and we refer the reader to Theorem 1 in \cite{BP03} or to Theorem 7.6 in \cite{Remling} for its proof. One need to use the conformal invariance of the harmonic measure to get \eqref{eq02} from those results. 
Fix $1 \le j \le N$ and consider the function $s$ defined by
$$
\begin{pmatrix} s(z,\ell) , 1\end{pmatrix} \simeq \begin{pmatrix} \overline{s_j}, 1\end{pmatrix} \overline{\cA(\bar z,\ell)}, \qquad z \in \bbC_+, \quad \ell \ge 0.
$$ 
For $x \in e_j$, we have $x = \bar x$ and
\begin{align*}
\gamma\left(s^+(x,\ell), \overline{s(x,\ell)}\right) 
&= \gamma\left(\begin{pmatrix}s_+(x), 1\end{pmatrix} \cA(x,\ell), \begin{pmatrix}s_j, 1\end{pmatrix} \cA(x,\ell)\right) \\
&= \gamma(\begin{pmatrix}s_+(x), 1\end{pmatrix} , \begin{pmatrix}s_j, 1\end{pmatrix}) \le \e,
\end{align*}
due to invariance of $\gamma$ with respect to M\"obius transformations (in the formula above we identified complex numbers with elements of the projective complex plane so that $z \in \bbC$ corresponds to $(z, 1)$). Using \eqref{eq14} we see that
$$
\left|\omega_{s^+(x,\ell)}(S) - \omega_{\overline{s(x,\ell)}}(S)\right| \le
\gamma\left(s^+(x,\ell), \overline{s(x,\ell)}\right) \le \e, \qquad x \in e_j.
$$
Integrating this inequality, we get
$$
\left|\int_{e_j}\omega_{s^+(x,\ell)}(S)\,dx - \int_{e}\omega_{\overline{s(x,\ell)}}(S)\,dx\right| \le \e|e_j|. 
$$
Let us show that for large $\ell>0$ we have
$$
\left|\int_{e_j}\omega_{s^-(x,\ell)}(\overline{S}) - \int_{e}\omega_{\overline{s(x,\ell)}}(S)\right| \le 3\e|e_j|,
$$
then the statement will follow. By definition, we have $\omega_{w}(\overline{S}) = \omega_{\bar w}(S)$. So, we need to show that
$$
\left|\int_{e_j}\omega_{s^-(x,\ell)}(\overline{S}) - \int_{e_j}\omega_{s(x,\ell)}(\overline{S})\right| \le 3\e|e_j|.
$$
Relation \eqref{eq02} reduces this to the inequality
\begin{equation}\label{eq006}
\left|\int_{e_j}\omega_{s^{-}(x+iy_*, \ell)}(\overline{S}) - \int_{e_j}\omega_{s(x+iy_*, \ell)}(\overline{S})\right| \le \e|e_j|.
\end{equation}
Estimate \eqref{eq14} and our definition of $s$ imply that it suffices to check that 
\begin{equation}\label{eq03}
\lim_{\ell \to +\infty} \gamma((s^{-}(z,\ell), 1),(w,1) \overline{\cA(\bar z,\ell)}) = 
0
\end{equation}
for every $w \in \bbD$ and every $z \in \bbC_+$ (since $\omega_w(\bar S) \le 1$ for every $w \in \bbD$, one can use Lebesgue dominated convergence theorem and \eqref{eq14} to derive \eqref{eq006} from \eqref{eq03}). Since $\cA(z, \ell) \in \SL(2,\bbC)$, 
with $\cJ$ given in \eqref{cJmatrixdefn} the standard formula for inverse of matrices gives 
\[
\cA(z, \ell)^{-1} = \cJ \cA(z, \ell)^\top \cJ.
\]
Combining this with  \eqref{reflectionsymmetry0} we get 
\begin{equation}\label{reflectionsymmetry}
\overline{\cA(\bar z, \ell)} = j_1 \cA(z, \ell) j_1, \qquad \forall z \in \bbC,
\end{equation}
where $\overline{ (\dots)}$ denotes entry by entry complex conjugation. From \eqref{eq16} and the invariance of $\gamma$ with respect to M\"obius transformations we now see that \eqref{eq03} holds if 
\begin{equation}\label{eq19}
\lim_{\ell \to +\infty}\diam_{\gamma} D_{\cB}(z,\ell) = 0, 
\end{equation}
where $\diam_{\gamma} F= \sup_{w_1,w_2 \in F} \gamma(w_1,w_2)$ is the diameter of a set $F \subset \bbD$ with respect to $\gamma$, and
$$
D_{\cB}(z,\ell) = \{(w, 1)\cB(z,\ell)^{-1}, \; w \in \bbD\}
$$
is the Weyl disk of the $j$-monotonic family $\cB(z,\ell) = j_1 \cA^{-1}(z,\ell)j_1$. Assume first that $z = i$ and $\cA$ admits the Arov normalization condition at $z = i$:
\[
\cA(i,\ell) = \begin{pmatrix}
e^{\mu(\ell)} & 0 \\
- e^{\mu(\ell)} \kappa(\ell) & e^{-\mu(\ell)}
\end{pmatrix},
\]
so
\[
\cB(i,\ell) = \begin{pmatrix}
e^{\mu(\ell)} & e^{\mu(\ell)} \kappa(\ell)  \\
0 & e^{-\mu(\ell)}
\end{pmatrix}.
\]
The Euclidean diameter of $D_{\cB}(i,\ell)$ is equal to $2/e^{2\mu(\ell)}(1- \kappa(\ell)^2)$ by Lemma \ref{l03}. Note that we have $\mu(\ell) \to +\infty$ and $\kappa(\ell) \to s_+(i)$ as $\ell\to+\infty$ by Lemma \ref{prop18}. Since $|s_+(i)| = |s_+(i,0)|< 1$ by our assumption, we see that the Euclidean diameters of $D_{\cB}(i,\ell)$ tend to zero as $\ell \to +\infty$. Since $\cB(i,\ell)$  is upper-triangular, we have $0 \in D$. Hence, relation \eqref{eq19} holds. Moreover, \eqref{16aug1} shows that under the Arov gauge normalization we have \eqref{eq19} for every $z \in \bbC_+$.
	
\medskip
	
In the general case, Proposition \ref{l07} gives a $j$-unitary family $\{U(\ell)\}$ such that $\tilde \cA = \cA U(\ell)$ obeys the Arov normalization. Then the previous argument implies that the $\gamma$-diameters of the Weyl disks
$$
D_{\tilde \cB}(z,\ell) = \{(w, 1)\tilde \cB(z,\ell)^{-1}, \; w \in \bbD\}, \qquad 
\tilde \cB(z,\ell) = j_1\tilde \cA^{-1}(z,\ell)j_1,
$$
tend to zero. Since $\tilde \cB(z,\ell)^{-1} = \cB(z,\ell)^{-1}j_1 U(\ell) j_1$ and $j_1 U(\ell) j_1$ is $j$-unitary, the same is true for the Weyl disks $D_\cB(z,\ell)$
due to the invariance of $\gamma$ under the M\"obius transformations. This ends the proof.
\end{proof}

\section{Remling's theorem for canonical systems in the Arov form}

In this section we are interested in description of coefficients of reflectionless canonical systems. Recall that a $j$-monotonic family $\{\cA(z,\ell)\}_{\ell \in \bbR}$ in the limit point case at $\pm\infty$ is called {\it reflectionless} on a set $E \subset \bbR$ if its Schur functions $s_{\pm}$ satisfy
\begin{equation}\label{rfl}
s_{+}(x) = \overline{s_-(x)} \quad \mbox{ for almost every }\quad x \in \sE
\end{equation}
in the sense of nontangential boundary values. In the case where $\sE$ is the absolutely continuous spectrum (see the beginning of Section \ref{s6}) of  $\{\cA(z,\ell)\}_{\ell \in \bbR}$ ,  this family and the canonical system it generates are called reflectionless.

\medskip

Roughly, the main result of this section says that canonical systems with almost periodic coefficients are reflectionless. Let us introduce the notion of almost periodicity we are going to use \cite[Section 6.1]{BLY2}. Let $\mu$ be a complex Borel measure on the real line $\bbR$. The measure $\mu$ is called translation bounded if for any compact subset $K \subset \bbR$ we have
\begin{equation}
\lVert \mu \rVert_K := \sup_{x\in\bbR} \lvert \mu\rvert (x+K) < \infty.
\end{equation}
Consider a set of test functions $X \subset L^1(\mu)$ closed under translation: $S_y X \subset X$, $y \in \bbR$, where the shift operator $S_y$ is defined by
$$
S_y h: x \mapsto h(x - y), \qquad x \in \bbR.
$$
We say that a translation bounded Borel measure $\mu$ on $\bbR$ is $X$-almost periodic if for all $h \in X$, the convolution
$$
h * \mu: x \mapsto \int_{\bbR} h(x-\ell)\,d\mu(\ell). 
$$
is a uniformly almost periodic function on $\bbR$ (that is, $\{S_y(h * \mu)\}_{y \in \bbR}$ is a precompact set in the Banach space of bounded continuous functions on $\bbR$). It is classical to define almost periodic measures using the space $C_{c}(\bbR)$ of continuous functions with compact support as a set $X$ of test functions, see, e.g., \cite{ArgGil74}, \cite{ArgGil90}. We will use another class $X = PC_c(\bbR)$ of piecewise continuous compactly supported functions. It is easy to see that a translation bounded complex measure $\mu$ without point masses is $PC_c(\bbR)$-almost periodic if and only if the function
$$
x \mapsto \mu([x, x+\ell]), \qquad x \in \bbR,
$$
is uniformly almost periodic for every $\ell >0$. We call a $2 \times 2$ matrix-valued mapping $PC_c(\bbR)$-almost periodic if each its entry is $PC_c(\bbR)$-almost periodic. This allows us to deal with canonical systems with $PC_c(\bbR)$-almost periodic coefficients. Here is a reformulation of Theorem \ref{t001new} in new terms.

\begin{theorem}\label{t001diff}
Every canonical system 
$$
\cA(z,\ell) j = j + \int_0^\ell \cA(z, \xi) \left( i z A(\xi) - B(\xi)\right) d\mu(\xi), \qquad z \in \bbC,
$$
with $PC_c(\bbR)$-almost periodic coefficients $A\,d\mu$, $B\,d\mu$ is reflectionless.
\end{theorem}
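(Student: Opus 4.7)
The plan is to combine the Breimesser--Pearson theorem (Theorem~\ref{BPthm}) with the recurrence property of uniformly almost periodic measures: by shifting the original canonical system by suitable large $\tau_n$, the shifted system returns arbitrarily close to the original one, and the BP identity at $\ell=\tau_n$ collapses to a genuine identity for the spectral functions $s_\pm$ themselves.

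First, by $PC_c(\bbR)$-almost periodicity of the measures $A\,d\mu$ and $B\,d\mu$, together with a diagonal extraction over a sequence of test-interval lengths, I would obtain a sequence $\tau_n\to+\infty$ such that, for every $L>0$,
$$\sup_{|t|\le L}\left\|\int_0^t A(\xi+\tau_n)\,d\mu(\xi+\tau_n)-\int_0^t A(\xi)\,d\mu(\xi)\right\|\longrightarrow 0,$$
together with the analogous convergence for $B\,d\mu$. A Gronwall-type continuous-dependence argument for the Volterra equation \eqref{canonicalsystemgeneral} then gives that the shifted transfer matrices $\cA_n(z,t):=\cA(z,\tau_n)^{-1}\cA(z,\tau_n+t)$ converge to $\cA(z,t)$ uniformly on compact subsets of $\bbC\times\bbR$. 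Combining this with the locally uniform shrinking of Weyl disks \eqref{16aug1} and a Hausdorff comparison of the Weyl disks of the shifted and original systems at a sufficiently large truncation parameter, one then concludes that $s_\pm(z,\tau_n)\to s_\pm(z)$ locally uniformly in $\bbC_+$ (for $s_-$, apply the same argument to the reflected family).

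Next I would apply Theorem~\ref{BPthm} to the original system (setting aside the trivial case where $s_+$ or $s_-$ is a unimodular constant, for which $\sE$ has measure zero by Corollary~\ref{c01} and the conclusion is vacuous): for any Borel $e\subset\sE$ with $|e|<\infty$ and any Borel $S\subset\bbT$,
$$\int_e\omega_{s_-(x,\tau_n)}(\bar S)\,dx-\int_e\omega_{s_+(x,\tau_n)}(S)\,dx\longrightarrow 0.$$
The main obstacle is to pass the limit $n\to\infty$ inside each integral: the convergence $s_\pm(z,\tau_n)\to s_\pm(z)$ is only a statement on the open half-plane, while the integrands involve boundary values on $\bbR$. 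I expect to resolve this by a bounded-harmonic-function argument: the functions $U_n^\pm(z):=\omega_{s_\pm(z,\tau_n)}(S)$ are harmonic on $\bbC_+$, uniformly bounded by $1$, and converge locally uniformly to $U^\pm(z):=\omega_{s_\pm(z)}(S)$; any weak-$*$ subsequential limit in $L^\infty(\bbR)$ of their boundary values, reinserted into the Poisson integral, must reproduce $U^\pm$ on $\bbC_+$ and hence coincide a.e.\ with the nontangential boundary value of $U^\pm$. Consequently $\int_e U_n^\pm\,dx\to\int_e\omega_{s_\pm(x)}(S)\,dx$.

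Passing to the limit then yields, for every admissible $e\subset\sE$ and Borel $S\subset\bbT$,
$$\int_e\omega_{s_-(x)}(\bar S)\,dx=\int_e\omega_{s_+(x)}(S)\,dx.$$
Arbitrariness of $e$ promotes this to the pointwise equality $\omega_{s_-(x)}(\bar S)=\omega_{s_+(x)}(S)=\omega_{\overline{s_+(x)}}(\bar S)$ for a.e.\ $x\in\sE$ and every Borel $S\subset\bbT$, and uniqueness of harmonic measure on $\bbD$ then forces $s_-(x)=\overline{s_+(x)}$ a.e.\ on $\sE$, which is precisely the reflectionless relation.
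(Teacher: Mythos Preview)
Your proposal follows the same route as the paper: the paper isolates your first four steps as Lemma~\ref{l72} (any right limit of a $j$-monotonic family is reflectionless on the a.c.\ spectrum of the original) and then uses almost periodicity together with a Dyson-series continuity argument to exhibit the original system as its own right limit; your Gronwall argument for continuous dependence is an equivalent substitute.

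The one substantive deviation is how you pass from the interior convergence $s_\pm(z,\tau_n)\to s_\pm(z)$ on $\bbC_+$ to convergence of the boundary integrals $\int_e\omega_{s_\pm(x,\tau_n)}(S)\,dx$. The paper reuses the Breimesser--Pearson estimate \eqref{eq02}, which is uniform over \emph{all} Schur functions and therefore applies simultaneously to the whole sequence $s_\pm(\cdot,\tau_n)$ and to the limit. Your weak-$*$ argument on the bounded harmonic functions $U_n^\pm(z)=\omega_{s_\pm(z,\tau_n)}(S)$ is a legitimate alternative, but it hides an identification step: you need the nontangential boundary value of $z\mapsto\omega_{s(z)}(S)$ to coincide a.e.\ with $\omega_{s(x)}(S)$, and at points where $|s(x)|=1$ this is not automatic for general Borel $S$, since the path $s(z)\to s(x)$ need not be a nontangential approach inside $\bbD$. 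Restricting $S$ to arcs---for which $\omega_w(S)$ extends continuously to $\overline{\bbD}$ minus the two endpoints, and which already suffice to determine harmonic measure in your final step---disposes of this; the paper's route via \eqref{eq02} simply avoids it.
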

Our plan, is to use the following lemma. The usage of the Breimesser-Pearson  theorem from the previous section in its proof is inspired by \cite{Rem11}.
\begin{lemma}\label{l72}
Suppose that $\{\cA(z,\ell)\}_{\ell \in \bbR}$ is a $j$-monotonic family in the limit point case at $\pm\infty$, and let $\{\tau_n\}$ be a sequence converging to $+\infty$ such that there exists the limit
$$
\cB(z,\ell) = \lim_{n \to +\infty}\cA^{[\tau_n]}(z,\ell), \qquad 
\cA^{[\tau_n]}(z,\ell) = \cA(z,\tau_n)^{-1}\cA(z,\ell+\tau_n).
$$
uniformly on compact subsets of $\bbC \times \bbR$.
Then $\{\cB(z,\ell)\}_{\ell \in \bbR}$ is a $j$-monotonic family reflectionless on the absolutely continuous spectrum $\sE$ of $\{\cA(z,\ell)\}_{\ell \in \bbR}$. 	
\end{lemma}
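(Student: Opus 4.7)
The strategy is to apply the Breimesser--Pearson theorem (Theorem~\ref{BPthm}) to $\cA$ at the shifts $\tau_n$ and pass to the limit $n\to\infty$, converting the asymptotic BP identity into a pointwise identity between the Schur functions $s_\pm^\cB$ of $\cB$ on $\sE$.

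First, I check that $\cB$ is a $j$-monotonic family: for $\ell_1\le\ell_2$ the matrix $\cB(z,\ell_1)^{-1}\cB(z,\ell_2)=\lim_n \cA(z,\tau_n+\ell_1)^{-1}\cA(z,\tau_n+\ell_2)$ is a locally uniform limit of $j$-inner functions, and $j$-innerness is preserved under such limits. Moreover, from \eqref{eq15}--\eqref{eq16} one sees that the positive/negative Schur functions of the shifted family $\cA^{[\tau_n]}$ are precisely $s_\pm^\cA(\cdot,\tau_n)$. By Montel's theorem $\{s_\pm^\cA(\cdot,\tau_n)\}_n$ are normal in $\bbC_+$; passing to a subsequence without renaming, they converge locally uniformly to Schur functions $s_\pm^*$. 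Assuming $\cB$ is in the limit-point case at $\pm\infty$ (which can be verified separately from convergence of the Weyl-disk radii together with the limit-point hypothesis on $\cA$), the Weyl-disk characterization of Schur functions gives $s_\pm^*=s_\pm^\cB$.

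Next, Theorem~\ref{BPthm} applied to $\cA$ along $\ell=\tau_n$ yields, for every Borel $e\subset\sE$ with $|e|<\infty$ and every Borel $S\subset\bbT$,
\begin{equation}\label{plan:BP}
\int_e\omega_{s_-^\cA(x,\tau_n)}(\bar S)\,dx - \int_e\omega_{s_+^\cA(x,\tau_n)}(S)\,dx \xrightarrow[n\to\infty]{} 0.
\end{equation}
To pass to the limit, I invoke estimate \eqref{eq02} from the proof of BP: given $\varepsilon>0$ there exists $y_0>0$ such that
\[
\sup_{S\subset\bbT}\left|\int_e\omega_{s(x+iy)}(S)\,dx - \int_e\omega_{s(x)}(S)\,dx\right|\le\varepsilon|e|
\]
for \emph{every} Schur function $s$ and every $0<y<y_0$. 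Applying this $y$-shift approximation to $s_\pm^\cA(\cdot,\tau_n)$ and $s_\pm^\cB$, sending $n\to\infty$ (using locally uniform convergence on $\bbC_+$ together with dominated convergence, since $\omega\le 1$ and $|e|<\infty$), then $y\downarrow 0$ and $\varepsilon\downarrow 0$, I obtain
\[
\int_e\omega_{s_-^\cB(x)}(\bar S)\,dx=\int_e\omega_{s_+^\cB(x)}(S)\,dx.
\]
Since $\omega_w(\bar S)=\omega_{\bar w}(S)$, and since $e$ and $S$ are arbitrary (the latter can be taken over a countable dense family of Borel sets in $\bbT$), one concludes $\omega_{\overline{s_-^\cB(x)}}=\omega_{s_+^\cB(x)}$ as Borel measures on $\bbT$ for a.e.\ $x\in\sE$, hence $s_+^\cB(x)=\overline{s_-^\cB(x)}$ a.e.\ on $\sE$.

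The main obstacle is the limit passage in \eqref{plan:BP}: Schur functions converge locally uniformly on $\bbC_+$ but not at the boundary, whereas the BP identity lives on the boundary. The $y$-shift approximation from \eqref{eq02}, crucially \emph{uniform} in the Schur function, allows one to move the identity into $\bbC_+$ where convergence does hold; this is Remling's device from \cite{Rem11,Remling}. A minor side issue is the non-degeneracy hypothesis $|s_\pm(i,\ell)|<1$ of Theorem~\ref{BPthm}: if this fails for $\cA$ on some half-line, Cor.~\ref{c01} forces the offending Schur function to be a unimodular constant there, in which case its contribution to $\sE$ is empty and the degenerate case is handled trivially.
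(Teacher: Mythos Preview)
Your proposal is correct and follows essentially the same approach as the paper: apply the Breimesser--Pearson theorem at the shifts $\tau_n$, then use the uniform $y$-shift approximation \eqref{eq02} together with locally uniform convergence $s_\pm^\cA(\cdot,\tau_n)\to s_\pm^\cB$ on $\bbC_+$ to pass to the limit, and conclude via $\omega_w(\bar S)=\omega_{\bar w}(S)$. You have in fact been more careful than the paper in flagging the side issues (limit-point case for $\cB$, the non-degeneracy hypothesis of Theorem~\ref{BPthm}), which the paper leaves implicit; note also that the Montel/subsequence step is unnecessary, since convergence of the transfer matrices already forces convergence of the Weyl disks and hence of the Schur functions along the full sequence.
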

\begin{proof}
By definition, $\{\cB(z,\ell)\}_{\ell \in \bbR}$ is a $j$-monotonic family, and, moreover,  we have $s_{\cA,\pm}(z,\tau_n) \to s_{\cB,\pm}(z)$, $z \in \bbC_+$, for the corresponding Schur spectral functions. It follows that 
$$
\lim_{n \to \infty}\int_{e}\omega_{s_{\cA,\pm}(x,\tau_n)}(t)\,dx = \int_{e}\omega_{s_{\cB,\pm}(x)}(t)\,dx, \qquad S \subset \bbT,
$$
for every Borel set $e \subset \bbR$ of finite Lebesgue measure. Here we used again estimate \eqref{eq02} for Schur functions, see Theorem 1 in \cite{BP03} or Theorem 7.6 in \cite{Remling}. If moreover $e$ is a subset of the a.c. spectrum $\sE$ of $\{\cA(z,\ell)\}$, from  Theorem \ref{BPthm} we get
$$
\int_{e}\omega_{s_{\cB,-}(x)}(\bar S)\,dx = \int_{e}\omega_{s_{\cB,+}(x)}(S)\,dx.
$$
Using the fact that $\omega_{w}(\bar S) = \omega_{\bar w}(S)$ for every $w \in \bbD$, we conclude that $\overline{s_{\cB,-}} = s_{\cB,+}$ almost everywhere on $\sE$, that is, the system $\{\cB(z,\ell)\}_{\ell \in \bbR}$ is reflectionless on $\sE$.
\end{proof}
We are ready to prove Theorem \ref{t001diff}.

\begin{proof}
Suppose that $\cA(z,\ell)$ is the transfer matrix of a canonical system with coefficients $A\,d\mu$, $B\,d\mu$ that are $PC_c(\bbR)$-almost periodic matrix-valued measures. For $\tau \ge 0$, $n \ge 0$, let 
$$
A_{\tau}(\ell)\,d\mu_{\tau}(\ell) = A(\ell+\tau)\,d\mu_\tau, 
\quad 
B_{\tau}(\ell)\,d\mu_{\tau} = B(\ell+\tau)\,d\mu_{\tau}, 
\qquad \mu_\tau(E) = \mu(\tau+E),
$$ 
be the coefficients of the canonical system with the transfer matrix $\cA^{[\tau]}(z,\ell)  = \cA(z,\tau)^{-1}\cA(z,\ell+\tau)$. Using almost periodicity, one can choose a sequence $\{\tau_n\}$, $\lim \tau_n = +\infty$, such that  
$$
\lim_{n \to+\infty}\int_{0}^{\ell} A_{\tau_n}(t)\,d\mu_{\tau_n}(t)
=\int_{0}^{\ell} A(t)\,d\mu(t)
$$
and 
$$
\lim_{n \to+\infty}\int_{0}^{\ell} B_{\tau_n}(t)\,d\mu_{\tau_n}(t)
=\int_{0}^{\ell} B(t)\,d\mu(t)
$$
uniformly in $\ell \ge 0$ on each interval $[0, T]$. For $z \in \bbC$, let 
$X_{z,n}$ denote the matrix valued measure $-(izA_{\tau_n}-B_{\tau_n})j\,d\mu_{\tau_n}$, and let $X_z = -(izA-B)j\,d\mu$. Then
$$
\cA^{[\tau_n]}(z,\ell) = I + \int_{0}^{\ell}dX_{z,n}(t_1) + \int_{0}^{\ell}\int_{0}^{t_1} dX_{z,n}(t_2)dX_{z,n}(t_1)+  \ldots
$$
where the infinite series in the right hand side converges uniformly on compact subsets of $[0,+\infty) \times \bbC$ because $\mu$ is translation bounded. Since the variation on $[0, T]$ of $X_{z,n}-X_z$ tends to zero uniformly on compact subsets of $\bbC$ for each $T\ge 0$, we have $\cA(z,\ell) = \lim_{n \to +\infty}\cA^{[\tau_n]}(z,\ell)$ and conditions of Lemma \ref{l72} are satisfied for the sequence $\{\tau_n\}$ and $\cB = \cA$. From Lemma \ref{l72} we now see that the family $\{\cA(z,\ell)\}_{\ell \in \bbR}$ is reflectionless. 
\end{proof}

\end{document}